\documentclass[12pt]{amsart}

\usepackage[vcentermath,enableskew]{youngtab}
\usepackage{amssymb}
\usepackage[all]{xy}

\setlength{\headheight}{15pt}
\setlength{\oddsidemargin}{-.3cm}
\setlength{\evensidemargin}{-.3cm}
\setlength{\textwidth}{16cm}
\setlength{\textheight}{21cm}

\newcommand{\version}{Ver.~0.0}
\newcommand{\setversion}[1]{\renewcommand{\version}{Ver.~{#1}}}
\setversion{0.0 [2014/06/09]}
\setversion{0.1 [2014/07/04 00:29:35]}
\setversion{1.0 [2016/12/26]}
\setversion{1.1 [2016/02/03 00:05:47]}
\setversion{2.0 [2016/03/18]}

\newtheorem{theorem}{Theorem}[section]
\newtheorem{lemma}[theorem]{Lemma}
\newtheorem{proposition}[theorem]{Proposition}
\newtheorem{corollary}[theorem]{Corollary}
\theoremstyle{definition}
\newtheorem{example}[theorem]{Example}
\newtheorem{notation}[theorem]{Notation}
\newtheorem{definition}[theorem]{Definition}
\newtheorem{remark}[theorem]{Remark}

\newcommand{\Xfv}{\mathfrak{X}}
\newcommand{\Gr}{\mathrm{Gr}}
\newcommand{\Lie}{\mathrm{Lie}}
\newcommand{\GL}{\mathrm{GL}}
\newcommand{\Sp}{\mathrm{Sp}}
\newcommand{\rk}{\mathrm{rk}\,}
\newcommand{\smallyoung}[1]{\mbox{\footnotesize $\young(#1)$}}
\newcommand{\verysmallyoung}[1]{\mbox{\tiny $\young(#1)$}}
\newcommand{\pbar}{\,{+}\,\rule[-2pt]{1.4pt}{2.55ex}}
\newcommand{\barm}{\hspace*{-.1pt}\rule[-2pt]{1.4pt}{2.55ex}\hspace*{.8pt}{-}\,}
\newcommand{\spbar}{\,{+}\hspace*{.6pt}\rule[-2pt]{1.7pt}{2.65ex}}
\newcommand{\sbarm}{\hspace*{-.1pt}\rule[-2pt]{1.4pt}{2.65ex}\hspace*{.8pt}{-}\,}

\newcommand{\mattwo}[4]{\Bigl(\begin{array}{@{\,}c@{\;\;}c@{\,}}{#1} & {#2} \\ {#3} & {#4} \end{array}\Bigr)}

\newcommand{\smallmatrixt}[1]{\renewcommand{\arraystretch}{0.4}
\setlength{\arraycolsep}{1.5pt}
\left(\begin{array}{ccc|c} #1
\end{array}\right)}

\begin{document}

\title{On the exotic Grassmannian and its nilpotent variety}
\author{Lucas Fresse}\thanks{L. Fresse is supported by the ISF Grant Nr. 882/10 and by the ANR project NilpOrbRT
(ANR-12-PDOC-0031).}
\address{Universit\'e de Lorraine, CNRS, Institut \'Elie Cartan de Lorraine, UMR 7502, Vandoeu\-vre-l\`es-Nancy, F-54506, France}
\email{lucas.fresse@univ-lorraine.fr}
\author{Kyo Nishiyama}\thanks{K. Nishiyama is supported by the JSPS Grant-in-Aid for Scientific Research 
\#{25610008}}
\address{Department of Physics and Mathematics, Aoyama Gakuin University, Fuchinobe 5-10-1, Sagamihara 252-5258, Japan}
\email{kyo@gem.aoyama.ac.jp}

\begin{abstract}
Given a decomposition
of a vector space $V=V_1\oplus V_2$, the direct product $\mathfrak{X}$ of the projective space $\mathbb{P}(V_1)$ with a Grassmann variety $\mathrm{Gr}_k(V)$
can be viewed as a double flag variety for the symmetric pair $(G,K)=(\mathrm{GL}(V),\mathrm{GL}(V_1)\times\mathrm{GL}(V_2))$.
Relying on the conormal variety for the action of $K$ on $\mathfrak{X}$,
we show a geometric correspondence between the $K$-orbits of $\mathfrak{X}$ and
the $K$-orbits of some appropriate exotic nilpotent cone.
We also give a combinatorial interpretation of this correspondence in some special cases.
Our construction is inspired by a classical result of Steinberg \cite{Steinberg.1976}
and by the recent work of Henderson and Trapa \cite{Henderson.Trapa.2012} for the symmetric pair $(\mathrm{GL}(V),\mathrm{Sp}(V))$.
\end{abstract}

\maketitle

\section{Introduction}

\subsection{Multiple flag varieties} Let $G$ be a connected reductive algebraic group over $\mathbb{C}$ and let $\mathfrak{g}$ be its Lie algebra. Let $(g,x)\mapsto g\cdot x$ denote the adjoint action of $G$ on $\mathfrak{g}$.

Given a parabolic subgroup $P\subset G$, the quotient $G/P$ is a projective variety called (partial) flag variety. 
Equivalently $G/P$ can be viewed as the set of parabolic subgroups of the same type as $P$ (i.e., conjugate to $P$), or as the set of parabolic subalgebras $\mathfrak{p}_1\subset\mathfrak{g}$ of the same type as $\mathfrak{p}:=\Lie(P)$ (i.e., of the form $g\cdot\mathfrak{p}$ for $g\in G$). 
Evidently the natural action of $G$ on $G/P$ is transitive.

Flag varieties are central objects in geometric representation theory.
In recent years there has been a growing interest in multiple flag varieties, or, 
direct products of flag varieties. 
By the Bruhat decomposition, a double flag variety of the form $G/P_1\times G/P_2$ always consists of finitely many orbits for the diagonal action of $G$, which are parametrized by the double cosets $W_{P_1}wW_{P_2}$ ($w\in W$) 
of the Weyl group $ W = W_G$.
Triple flag varieties of the form $G/P_1\times G/P_2\times G/P_3$ consist of 
infinitely many $G$-orbits in general. 
Triple flag varieties with finite number of $G$-orbits are classified in  
\cite{MWZ.1999}, \cite{MWZ.2000} 
in the classical cases (see also \cite{Matsuki.2013}).

In this paper we study certain multiple flag varieties that can be associated to symmetric pairs.

\subsection{Double flag variety of a symmetric pair} Let $\theta$ be an involutive automorphism of $G$. The subgroup $K=G^\theta:=\{g\in G:\theta(g)=g\}$ is then reductive, and the pair $(G,K)$ is called a {\em symmetric pair}. 

\begin{example}
{\rm (a)} For $\theta=\mathrm{id}_G$ we obtain the (trivial) symmetric pair $(G,G)$. \\
{\rm (b)} Let $G=\GL_{2n}(\mathbb{C})$ and let $\theta(g)=\iota {}^tg^{-1}\iota^{-1}$, 
where $\iota=\left(\begin{smallmatrix}
0 & 1_n \\ -1_n & 0
\end{smallmatrix}\right)$. Then $G^\theta=\Sp_{2n}(\mathbb{C})$. The pair $(\GL_{2n}(\mathbb{C}),\Sp_{2n}(\mathbb{C}))$ is referred to as a symmetric pair of type AII. \\
{\rm (c)} Similarly,  $(G,K)=(\GL_{p+q}(\mathbb{C}),\GL_p(\mathbb{C})\times \GL_q(\mathbb{C}))$ is a symmetric pair, which is called of type AIII. 
In this case, the involution $ \theta $ is given by an inner conjugation by $\left(\begin{smallmatrix}
1_p & 0 \\ 0 & -1_q
\end{smallmatrix}\right)$.  

We refer the readers to 
\cite{Helgason.DG.1978} 
for a complete classification of classical symmetric pairs.
\end{example}

Following \cite{NO.2011}, for parabolic subgroups $Q\subset K$ and $P\subset G$, we consider the double flag variety
\[\Xfv:=G/P\times K/Q.\]
The group $K$ acts diagonally on $\Xfv$ and we call $\Xfv$ of {\em finite type} if it has a finite number of $K$-orbits.
This is not the case in general, as shown by the next examples.

\begin{example}
\label{E1.2}
{\rm (a)} In the case of the trivial symmetric pair $(G,G)$, the double flag variety $\Xfv$ is a usual double flag variety for $G$ of the form $G/P\times G/Q$, and it is of finite type for every choice of $Q$ and $P$ 
(by the Bruhat decomposition as already explained above). 
\\
{\rm (b)} Consider $\theta:G\times G\to G\times G$ given by the flip $\theta(g_1,g_2)=(g_2,g_1)$. 
Then $G^\theta=\{(g,g):g\in G\}\cong G$. 
In this case $\Xfv$ is a triple flag variety of the form 
$(G\times G)/(P_1\times P_2)\times G/Q=G/P_1\times G/P_2\times G/Q$, which is in general not of finite type 
(see \cite{MWZ.1999} and \cite{MWZ.2000} as already mentioned).
\end{example}

Sufficient criteria for $\Xfv$ to be of finite type are obtained in \cite{HNOO.2013,NO.2011}. 
Moreover, in \cite{HNOO.2013}, the complete classification of the double flag varieties 
$ \Xfv = G/P \times K/Q $ of finite type is given when $ P $ or $ Q $ is a Borel subgroup of $ G $ or $ K $, respectively.

In the case of the symmetric pair of type AIII, we know:

\begin{proposition}[{see \cite[Table 3]{NO.2011}}]
\label{proposition-1}
Let $(G,K)=(\GL_{p+q}(\mathbb{C}),\GL_p(\mathbb{C})\times \GL_q(\mathbb{C}))$.
In each of the following cases,
the variety $\Xfv=G/P\times K/Q$ is of finite type. 
\begin{itemize}
\item[\rm (a)] $Q\subset K$ is mirabolic {\upshape(}i.e., $K/Q$ is a projective space{\upshape)} and $P$ is arbitrary;
\item[\rm (b)] $P\subset G$ is maximal 
{\upshape(}i.e., $G/P$ is a Grassmannian{\upshape)} and $Q$ is arbitrary.
\end{itemize}
\end{proposition}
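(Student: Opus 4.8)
The plan is to reduce both statements to a question about $Q$-orbits on $G/P$, and then to treat~(b) by a reduction to a quiver of type $A$ and~(a) by a sphericity argument. First, for any $K$-variety $X$ every $K$-orbit of $X\times K/Q$ meets $X\times\{eQ\}$, and $\mathrm{Stab}_K(eQ)=Q$; so the $K$-orbits of $X\times K/Q$ are in bijection with the $Q$-orbits of $X$. Hence $\Xfv=G/P\times K/Q$ is of finite type if and only if $Q$ acts on $G/P$ with finitely many orbits, and this is the only reduction common to the two cases.

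For case~(b) we have $G/P=\Gr_k(V)$. A parabolic subgroup $Q\subset K$ contains a conjugate of a Borel $B_K=B_1\times B_2$ of $K$ (with $B_i\subset\GL(V_i)$), and the $Q$-orbits of $\Gr_k(V)$ are unions of $B_K$-orbits, so it suffices to take $Q=B_K$. By the bijection above, the $B_K$-orbits of $\Gr_k(V)$ correspond to the $K$-orbits on triples $(W,E^1_\bullet,E^2_\bullet)$, with $W\subset V=V_1\oplus V_2$ a $k$-dimensional subspace and $E^i_\bullet$ a complete flag of $V_i$. Presenting $W$ by the pair of projections $W\to V_1$, $W\to V_2$ — which is jointly injective, and conversely any jointly injective pair recovers $W$ as a graph inside $V$, uniquely up to $\GL(W)$ — this datum is a representation, with a prescribed dimension vector $(1,2,\dots,p,k,q,\dots,2,1)$, of the type $A_{p+q+1}$ quiver
\[
E^1_1\to\cdots\to E^1_p\leftarrow W\to E^2_q\leftarrow\cdots\leftarrow E^2_1 ,
\]
and the action of $\GL(V_1)\times\GL(V_2)$ on the triples is induced by the base-change action of $\prod_i\GL(E^1_i)\times\GL(W)\times\prod_j\GL(E^2_j)$ on the representation space. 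By Gabriel's theorem a quiver of type $A$ has finitely many indecomposable representations, hence finitely many orbits on each fixed-dimension representation space, and a fortiori on the locally closed piece where the injectivity conditions hold. Therefore $B_K$, and so every parabolic $Q\subset K$, has finitely many orbits on $\Gr_k(V)$, proving~(b).

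For case~(a), $K/Q$ is a projective space, so identifying $K=\GL(V_1)\times\GL(V_2)$ we may take $Q=\mathrm{Stab}_{\GL(V_1)}(L_0)\times\GL(V_2)$ for a line $L_0\subset V_1$; equivalently $Q=K\cap P_{\mathrm{max}}$, where $P_{\mathrm{max}}=\mathrm{Stab}_G(L_0)$ and $G/P_{\mathrm{max}}=\mathbb{P}(V)$. We must show that $Q$ has finitely many orbits on every $G/P$, and for this it is enough to show that $Q$ is a \emph{spherical} subgroup of $G$, since a spherical subgroup has finitely many orbits on every partial flag variety (Brion, Vinberg). The plan is to verify sphericity directly: choose a complete flag $F_\bullet$ of $V$ in general position relative to $(L_0\subset V_1,V_2)$, compute $\mathrm{Stab}_Q(F_\bullet)$, and check that $\dim\mathrm{Stab}_Q(F_\bullet)=\dim Q-\dim(G/B)=\tfrac{1}{2}\bigl((p-q)^2-(p-q)+2\bigr)$; then $Q$ has a dense orbit on the full flag variety $G/B$ and is therefore spherical. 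Alternatively one may simply identify $Q$ in the classification of spherical subgroups of $\GL_n$.

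The main obstacle is precisely this sphericity statement in case~(a). Note that the quiver method of case~(b) does \emph{not} apply: encoding the datum $(F_\bullet,L)$ with $L\subset V_1$ leads to a star-shaped quiver of type $T_{r,3,2}$, where $r$ is the number of steps of $F_\bullet$, and this quiver is no longer of finite representation type once $r\geq 6$; so one really has to produce the dense $Q$-orbit on $G/B$ by hand, or appeal to the structure theory of spherical subgroups.
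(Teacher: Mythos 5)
Your common reduction (that $K$-orbits on $X\times K/Q$ biject with $Q$-orbits on $X$) is correct, and your case (b) is complete: after passing to a Borel $B_K\subset Q$, encoding a point of $\Gr_k(V)\times K/B_K$ as a representation of the $A_{p+q+1}$ quiver with dimension vector $(1,\dots,p,k,q,\dots,1)$ and invoking Gabriel's theorem does yield finiteness of $B_K$-orbits, hence of $Q$-orbits for every parabolic $Q\subset K$. For comparison, the paper itself gives no proof of this proposition --- it is quoted from \cite[Table 3]{NO.2011} --- so the relevant benchmark is that reference; your argument for (b) is a legitimate self-contained alternative, essentially the quiver technique of \cite{MWZ.1999}.

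The genuine gap is case (a), and you have named it yourself: everything rests on the claim that $Q=\mathrm{Stab}_{\GL(V_1)}(L_0)\times\GL(V_2)$ is a spherical subgroup of $G$, and this claim is announced but never proved. What you supply is only a plan (``choose a flag in general position, compute its stabilizer, check the dimension''), and the identity $\dim\mathrm{Stab}_Q(F_\bullet)=\dim Q-\dim(G/B)$ for some flag $F_\bullet$ \emph{is} the nontrivial content: the computation $\dim Q-\dim(G/B)=\tfrac{1}{2}\bigl((p-q)^2-(p-q)+2\bigr)\ge 1$ only verifies the necessary inequality $\dim Q\ge\dim(G/B)$, which by itself never produces a dense orbit, and the alternative ``identify $Q$ in the classification of spherical subgroups of $\GL_n$'' is likewise not carried out or referenced precisely. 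Note that case (a) in its full strength ($P$ an arbitrary parabolic, $Q$ mirabolic) is exactly what the paper uses in Section \ref{section-5-1} to know that $\mathcal{Y}$ is equidimensional, so it cannot be waved through. To close the gap you must either exhibit a concrete flag whose $Q$-stabilizer has the stated dimension (doable, but it is a genuine computation, e.g.\ by induction on $p$ and $q$), or cite a result that contains the sphericity of $\mathrm{Mir}_p\times\GL_q$ in $\GL_{p+q}$, for instance the results underlying \cite[Table 3]{NO.2011} themselves; as written, part (a) is not established.
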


\begin{remark}
In \cite[Table 3]{NO.2011}, more examples of $ \Xfv $ of type AIII which are of finite type are given.  
But, even in the case of type AIII, the classification of double flag varieties of finite type is open.
\end{remark}

\begin{notation}
\label{notation-1-5new}
Hereafter $(G, K)$ denotes a symmetric pair.  
By differentiation, 
the automorphism $\theta$ induces a Lie algebra automorphism still denoted by the same letter 
$\theta\in\mathrm{Aut}(\mathfrak{g})$. 
Let 
\begin{equation*}
\mathfrak{g}=\mathfrak{k}\oplus\mathfrak{s}, 
\quad \text{ where } \quad 
\mathfrak{k}=\Lie(K)=\ker(\theta-\mathrm{id}_\mathfrak{g})
\; \text{ and } \;  
\mathfrak{s}=\ker(\theta+\mathrm{id}_\mathfrak{g}). 
\end{equation*}
For every $x\in\mathfrak{g}$, we write
$x=x^\theta+x^{-\theta}$ with $(x^\theta,x^{-\theta})\in\mathfrak{k}\times\mathfrak{s}$, or to be more explicit, 
\begin{equation*}
x^{\theta}=\frac{1}{2}(x+\theta(x))\qquad\mbox{and}\qquad x^{-\theta}=\frac{1}{2}(x-\theta(x)).
\end{equation*}
Finally, we denote by $\mathcal{N}(\mathfrak{g})$, $\mathcal{N}(\mathfrak{k})$, and $\mathcal{N}(\mathfrak{s})$ the sets of nilpotent elements of $\mathfrak{g}$, $\mathfrak{k}$, and $\mathfrak{s}$, respectively.
\end{notation}

\subsection{Conormal variety}\label{section-1-3}

The action of the group $K$ on the double flag variety $\Xfv=K/Q\times G/P$ 
induces a Hamiltonian action of $K$ on the cotangent bundle $T^*\Xfv$, 
which therefore gives rise to a moment map $\mu_\Xfv:T^*\Xfv\to\mathfrak{k}^*\cong\mathfrak{k}$ 
(see \cite{Chriss.Ginzburg.1997} for example).  

Let us describe the moment map $ \mu_\Xfv $ explicitly.  
For that purpose, we realize the cotangent bundle over $ G/P $ as 
\begin{equation*}
T^*(G/P) = \{ (\mathfrak{p}_1,x) : \mathfrak{p}_1 \text{ is $ G $-conjugate to } \mathfrak{p}, \; 
x \in \mathfrak{n}_{\mathfrak{p}_1} \} \simeq G \times_P \mathfrak{n}_{\mathfrak{p}} , 
\end{equation*}
where $\mathfrak{n}_{\mathfrak{p}_1}$ stands for the nilpotent radical of 
a parabolic subalgebra $\mathfrak{p}_1\subset\mathfrak{g}$. 
Similarly we realize $ T^\ast(K/Q) $, and then get 
the cotangent bundle $T^*\Xfv=T^*(G/P)\times T^*(K/Q)$ as the set of quadruples
\[
T^*\Xfv = 
\{ ((\mathfrak{p}_1,x), (\mathfrak{q}_1,y)) \in (G/P\times\mathfrak{g}) \times (K/Q\times\mathfrak{k}): 
x \in \mathfrak{n}_{\mathfrak{p}_1},\ y\in\mathfrak{n}_{\mathfrak{q}_1}\} . 
\]
Here $\mathfrak{n}_{\mathfrak{q}_1}$ denotes the nilpotent radical of 
a parabolic subalgebra $\mathfrak{q}_1\subset\mathfrak{k}$, 
and $ \mathfrak{q}_1 $ is a $ K $-conjugate of $ \mathfrak{q} $.  
Then the moment map $\mu_\Xfv$ can be viewed as the map
\[
\mu_\Xfv : T^*\Xfv \to \mathfrak{k}, \qquad \mu_\Xfv((\mathfrak{p}_1,x),(\mathfrak{q}_1,y)) = x^\theta + y .
\]
%
The null fiber $\mathcal{Y}:=\mu_\Xfv^{-1}(\{0\})\subset T^*\Xfv$
of $\mu_\Xfv$ is called {\it conormal variety}. 
It is a closed subvariety in the cotangent bundle and 
we may identify it as 
\begin{equation}
\label{eq:conormal-variety-Y}
\mathcal{Y}=\{(\mathfrak{p}_1,\mathfrak{q}_1,x)\in G/P\times K/Q\times\mathfrak{g}:x\in\mathfrak{n}_{\mathfrak{p}_1},\ x^\theta\in\mathfrak{n}_{\mathfrak{q}_1}\}, 
\end{equation}
since $ y = - x^{\theta} $ is determined by $ x $.

Every $K$-orbit $\mathbb{O}\subset\Xfv$ gives rise to a conormal bundle
$T^*_\mathbb{O}\Xfv:=\bigcup_{z\in\mathbb{O}}(T_z\mathbb{O})^\perp$, which is a Lagrangian, 
smooth, irreducible subvariety of $T^*\Xfv$ of dimension $\dim\Xfv$.  
From the definition of $\mathcal{Y}$ it is readily seen that
the inclusion $T^*_\mathbb{O}\Xfv\subset\mathcal{Y}$ holds.  
In fact, in the case where $\Xfv$ is of finite type, we obtain a finite decomposition
\begin{equation}
\label{1.1}
\mathcal{Y}=\bigsqcup_{\mathbb{O}\in\Xfv/K}T^*_\mathbb{O}\Xfv=\bigcup_{\mathbb{O}\in\Xfv/K}\overline{T^*_\mathbb{O}\Xfv}
\end{equation}
into closed subvarieties of the same dimension.  
Thus the conormal variety $\mathcal{Y}$ is equidimensional and its irreducible components are exactly the closures 
$\overline{T^*_\mathbb{O}\Xfv}$ for $\mathbb{O}\in\Xfv/K$.
They are parametrized by the orbit set $\Xfv/K$.

So if we can parametrize the irreducible components of $ \mathcal{Y} $ nicely by 
different objects, then we can describe the $ K $-orbits in $ \Xfv $ using such parametrization.  
In fact, Steinberg did this for the trivial symmetric pair in his classical paper \cite{Steinberg.1976}.  Let us explain it as a guiding principle.



\subsection{Steinberg correspondence}

We can re-interpret results of Steinberg \cite{Steinberg.1976} using the above mentioned idea 
in the following way.  
This corresponds to the case of the trivial symmetric pair $ (G, K) = (G, G) $, 
where $ \theta $ is considered to be the identity. 

Let $P=Q=B$ be a Borel subgroup of $G$. 
On the one hand, by Bruhat decomposition, 
the $G$-orbits of the double flag variety $\Xfv=G/B\times G/B$ are parametrized by the Weyl group elements $w\in W$,
hence so are the components of $\mathcal{Y}$.
On the other hand, in the present situation, the conormal variety can be described as
\[\mathcal{Y}=\{(\mathfrak{b}_1,\mathfrak{b}_2,x)\in G/B\times G/B\times\mathfrak{g}:x\in\mathfrak{n}_{\mathfrak{b}_1}\cap\mathfrak{n}_{\mathfrak{b}_2}\},\]
and we can consider the map
\[
\pi : \mathcal{Y} \to \mathcal{N}(\mathfrak{g}),
\qquad 
(\mathfrak{b}_1,\mathfrak{b}_2,x) \mapsto x.
\]
The nilpotent cone $\mathcal{N}=\mathcal{N}(\mathfrak{g})$ consists of finitely many $G$-orbits.
The map $\pi$ is a fibration over each orbit $Gx$, 
with fiber $\mathcal{B}_x\times\mathcal{B}_x$, 
where 
\begin{equation*}
\mathcal{B}_x=\{\mathfrak{b}_1\in G/B:x\in\mathfrak{n}_{\mathfrak{b}_1}\}
\end{equation*}
is the Springer fiber of $x$ (an equidimensional variety).
Moreover it can be shown that $\dim\pi^{-1}(Gx)$ does not depend on $x\in\mathcal{N}(\mathfrak{g})$.
Altogether this yields (explicit) bijections
\begin{equation}
\label{1.2}
W\cong \mathrm{Irr}(\mathcal{Y})\cong\bigsqcup_{x\in\mathcal{N}/G}\mathrm{Irr}(\pi^{-1}(Gx))
\cong\bigsqcup_{x\in\mathcal{N}/G}(\mathrm{Irr}(\mathcal{B}_x)\times\mathrm{Irr}(\mathcal{B}_x))/Z_G(x),
\end{equation}
where $\mathrm{Irr}(Z)$ stands for the set of irreducible components of a variety $Z$.
In the right-hand side of (\ref{1.2})
the quotient by the diagonal action of the stabilizer
$Z_G(x):=\{g\in G:g\cdot x=x\}$ on pairs of components of $\mathcal{B}_x$ is considered.
We refer to \cite{Steinberg.1976} for more details.

When $G=\GL_n(\mathbb{C})$, the Weyl group $W=\mathfrak{S}_n$ is the symmetric group, 
the nilpotent orbits $Gx\subset\mathcal{N}$ are parametrized by partitions $\lambda\vdash n$,
the set $\mathrm{Irr}(\mathcal{B}_x)$ is in bijection with the set $\mathrm{STab}(\lambda)$ of standard Young tableaux of shape $\lambda$ (see \cite[\S II.5]{Spaltenstein.1982} and \cite[\S 5]{Steinberg.1976}),
and $Z_G(x)$ is connected hence acts trivially on $\mathrm{Irr}(\mathcal{B}_x)$.
Thus (\ref{1.2}) yields a bijection
\begin{equation}
\mathfrak{S}_n\cong\bigsqcup_{\lambda\vdash n}\mathrm{STab}(\lambda)\times \mathrm{STab}(\lambda),
\end{equation}
which actually coincides with the classical {\it Robinson--Schensted correspondence}
\cite{Steinberg.1988}.

The Steinberg correspondence is closely related to the Springer correspondence \cite{Springer.1976} between nilpotent orbits and Weyl group representations, in the sense that (\ref{1.2}) yields a decomposition of $W$ into cells, which is a basic ingredient for obtaining further realizations and interpretations of Springer representations.
The notion of cells in Weyl groups is originally developed by Joseph \cite{Joseph.1978, Joseph.1979} and later on by Kazhdan and Lusztig \cite{Kazhdan.Lusztig.1979}. In the case of $G=\GL_n(\mathbb{C})$ the Springer correspondence is a direct bijection between the set of nilpotent orbits $\mathcal{N}/G$ and the set $\widehat{W}$ of irreducible representations of $W=\mathfrak{S}_n$. 
Beyond the case of $\GL_n(\mathbb{C})$ the correspondence is more complicated, 
mainly due to the topology of nilpotent $G$-orbits (e.g., the non-connectedness of the stabilizer $Z_G(x)$).

The last fact motivates the search of alternative geometric constructions of Weyl group representations 
in classical cases other than type A.
By considering the symmetric pair of type AII $(G,K)=(\GL_{2n}(\mathbb{C}),\Sp_{2n}(\mathbb{C}))$, 
and relying on properties of affine Hecke algebras,
Kato \cite{Kato.2009, Kato.2011} establishes a bijection ({\it exotic Springer correspondence}) 
between the $\Sp_{2n}(\mathbb{C})$-orbits of the {\it exotic nilpotent cone} 
$\mathbb{C}^{2n}\times\mathcal{N}(\mathfrak{s})$ 
and the irreducible representations of the Weyl group of $\Sp_{2n}(\mathbb{C})$. 
Shoji and Sorlin \cite{Shoji.Sorlin.2013} retrieve this correspondence via character sheaves on 
the exotic symmetric space $\mathbb{C}^{2n}\times \bigl(\GL_{2n}(\mathbb{C})/\Sp_{2n}(\mathbb{C})\bigr) $.

Finally Henderson and Trapa \cite{Henderson.Trapa.2012} interpret Kato's correspondence 
in terms of a relation (an exotic version of Steinberg correspondence) 
between $\Sp_{2n}(\mathbb{C})$-orbits in the exotic nilpotent cone $\mathbb{C}^{2n}\times\mathcal{N}(\mathfrak{s})$ 
and $\Sp_{2n}(\mathbb{C})$-orbits in the variety $\mathbb{C}^{2n}\times \bigl( \GL_{2n}(\mathbb{C})/B \bigr) $ 
(analogous to the double flag variety 
\begin{equation*}
\mathbb{P}(\mathbb{C}^{2n})\times \bigl( \GL_{2n}(\mathbb{C})/B \bigr) = K/Q\times G/B
\end{equation*}
with $Q\subset K$ mirabolic).
Our work is inspired by 
\cite{Henderson.Trapa.2012}.

\subsection{Exotic Grassmannian and exotic nilpotent variety}\label{section-1-5}

Our goal in this paper is to establish an exotic version of Steinberg correspondence for a symmetric pair of type AIII.
Specifically
we consider the following double flag variety.

\begin{notation}\label{notation-1-5}
We use the following notation throughout the paper.

{\rm (a)} We let $V=\mathbb{C}^n$ ($n\geq 1$) and consider the decomposition
\[V=V_1\oplus V_2\ \ \mbox{with}\ \ V_1=\mathbb{C}^p\times\{0\}^{q},\ V_2=\{0\}^{p}\times\mathbb{C}^q\quad(p+q=n).\]
Let $(G,K)$ be the symmetric pair given by
\begin{equation*}
G=\GL(V), 
\qquad
K=\GL(V_1)\times \GL(V_2)\subset G .
\end{equation*}
Let $k\in\{0,\ldots,n\}$. By $\Gr_k(V)$  we denote the Grassmann variety of $k$-dimensional subspaces of $V$.
The {\em exotic Grassmannian} is the double flag variety 
\begin{equation}\label{eq:definition-of-exotic-Grassmannian}
\Xfv:=\Gr_k(V)\times \mathbb{P}(V_1).
\end{equation}
Thus $\Xfv$ is of the form $G/P\times K/Q$, where 
$P\subset G$ is a maximal parabolic subgroup stabilizing a $ k $-space in $ V $ and 
$Q\subset K$ is a mirabolic subgroup stabilizing a line in $ V_1 $.  
In fact, we see that $Q=Q_1\times \GL(V_2)$ with $Q_1\subset \GL(V_1)$ mirabolic. 

{\rm (b)}
Let $ \mathfrak{g}:=\Lie(G)=\mathcal{L}(V) $, and set 
\begin{equation*}
\mathfrak{k} := \Lie(K) = \{
\mattwo{x_1}{0}{0}{x_2} 
: x_i\in\mathcal{L}(V_i) \},
\quad
\mathfrak{s} := \{
\mattwo{0}{x_{12}}{x_{21}}{0}
: x_{ij} \in \mathcal{L}(V_j,V_i) \} . 
\end{equation*}
For 
$ x = \mattwo{x_1}{x_{12}}{x_{21}}{x_2}
\in \mathfrak{g} $ we denote 
$ x^\theta = 
\mattwo{x_1}{0}{0}{x_2} 
\in \mathfrak{k} $ and 
$ x^{-\theta} = 
\mattwo{0}{x_{12}}{x_{21}}{0}
\in \mathfrak{s} $.
The conormal variety $\mathcal{Y}$ corresponding to the $K$-variety $\Xfv$ can be described as
\[\mathcal{Y}=\{(W,L,x)\in \Gr_k(V)\times\mathbb{P}(V_1)\times\mathcal{L}(V):\mathrm{Im}\,x\subset W\subset\ker x,\ \mathrm{Im}\,x^\theta\subset L\subset\ker x^\theta\}.\]
This is an explicit realization of $ \mathcal{Y} $ in this case, 
the general definition being given in \eqref{eq:conormal-variety-Y}.

{\rm (c)} 
Let us define the ($K$-equivariant) map 
\[\pi:\mathcal{Y} \to \mathbb{P}(V_1) \times \mathfrak{s}, \qquad (W,L,x)\mapsto (L,x^{-\theta}). \]
Then, as we will prove in Section \ref{section-3}, $ x^{-\theta} $ is nilpotent and belongs to 
\begin{equation*}
\mathcal{N}(\mathfrak{s})_2^k := \{ x\in\mathfrak{s} : x^2 = 0 \ \text{ and } \, \mathrm{rk}\, x \leq \min\{k,n-k\} \} ;
\end{equation*}
in addition the image $ \pi(\mathcal{Y}) $ coincides with 
\[\mathfrak{E}:=\mathbb{P}(V_1)\times\mathcal{N}(\mathfrak{s})_2^k,\]
and we call it {\em exotic nilpotent cone}.  
Thus we have a surjective $ K $-equivariant map $ \pi : \mathcal{Y} \to \mathfrak{E} $ 
(see Proposition~\ref{proposition-2-8}; proved in Section~\ref{subsection:proof-prop-2-8}).
\end{notation}

Our analysis of the exotic Grassmannian $\Xfv$ relies on the following key facts.
\begin{itemize}
\item
The diagonal action of $K$ on $\Xfv$ has a finite number of orbits (this follows from Proposition \ref{proposition-1}; see also Section \ref{section-2-3} for a combinatorial description of the orbits). Therefore $\mathcal{Y}$ is equidimensional and 
$\mathrm{Irr}(\mathcal{Y})\cong \Xfv/K$ (see Section \ref{section-1-3}).
\item
The diagonal action of $K$ on $\mathfrak{E}$ has a finite number of orbits
(this result is due to Johnson \cite{Johnson.2010.MR}; see Section \ref{section-2-1}).
\end{itemize}
Using the map $ \pi : \mathcal{Y} \to \mathfrak{E} $, 
we relate $K$-orbits in $\mathfrak{E}$ and components of $\mathcal{Y}$.
Thus we can describe the relation between $ K $-orbits in $ \Xfv $ and $ \mathfrak{E} $, 
which is our main result.

\begin{theorem}[cf., Theorem~\ref{theorem-2}]\label{theorem-1}
Let $\Xfv,\mathcal{Y},\mathfrak{E},p,q,k$ be as above. 

{\rm (a)} For every component $\mathcal{C}\subset\mathcal{Y}$, there is a unique $K$-orbit
$\mathfrak{O}_\mathcal{C}\subset\mathfrak{E}$ such that $\mathcal{C}\subset \overline{\pi^{-1}(\mathfrak{O}_\mathcal{C})}$. 
Moreover, the so-obtained map $\Xi:\mathrm{Irr}(\mathcal{Y})\to\mathfrak{E}/K$, $\mathcal{C}\mapsto\mathfrak{O}_\mathcal{C}$ is surjective, and each fiber of $\Xi$ 
contains at most two elements.

{\rm (b)} In the case where $p\leq \max\{k,n-k,q+1\}$, the map $\Xi$ is bijective.
\end{theorem}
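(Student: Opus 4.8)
The plan is to set up the correspondence $\Xi$ via the standard dimension-counting mechanism used by Steinberg and by Henderson--Trapa, and then to sharpen it using explicit combinatorics of the $K$-orbits in each of the three varieties $\Xfv$, $\mathcal{Y}$, and $\mathfrak{E}$. First I would establish part (a). Since $\Xfv$ is of finite type (Proposition~\ref{proposition-1}), the conormal variety $\mathcal{Y}$ is equidimensional of dimension $\dim\Xfv$ with $\mathrm{Irr}(\mathcal{Y})\cong\Xfv/K$. Because $\mathfrak{E}$ has finitely many $K$-orbits (Johnson), for any component $\mathcal{C}\subset\mathcal{Y}$ the constructible set $\pi(\mathcal{C})$ meets a unique open dense $K$-orbit $\mathfrak{O}_\mathcal{C}$, and $\mathcal{C}\subset\overline{\pi^{-1}(\mathfrak{O}_\mathcal{C})}$; this gives the well-defined map $\Xi$. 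Surjectivity follows because $\pi(\mathcal{Y})=\mathfrak{E}$ (the surjectivity statement from Notation~\ref{notation-1-5}(c)): given a $K$-orbit $\mathfrak{O}\subset\mathfrak{E}$, pick any component of $\mathcal{Y}$ meeting $\overline{\pi^{-1}(\mathfrak{O})}$ densely over $\mathfrak{O}$. The crucial bound ``at most two elements per fiber'' is the heart of (a): I would compute, for a fixed $(L,\xi)\in\mathfrak{E}$ with $\xi\in\mathcal{N}(\mathfrak{s})_2^k$, the fiber $\pi^{-1}(L,\xi)$ as the set of pairs $(W,x)$ with $x^{-\theta}=\xi$, $\mathrm{Im}\,x\subset W\subset\ker x$, and $x^\theta$ satisfying the nilradical conditions relative to $L$; the fiber over the $K$-orbit of $(L,\xi)$ then fibers further, and a careful analysis of $\mathrm{Irr}$ of this fiber (together with the $Z_K(L,\xi)$-action, which one checks is transitive on components except for one combinatorial ambiguity) yields exactly one or two components mapping to $\mathfrak{O}$. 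The dichotomy ``one vs.\ two'' I expect to come from a parity/choice phenomenon analogous to the two standard tableaux glued at a box in the Robinson--Schensted picture: the choice of how $W$ interacts with $L$ relative to $\ker x\cap\mathrm{Im}\,x$.

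For part (b), the strategy is to pin down both sides combinatorially and match cardinalities. I would invoke the combinatorial description of $\Xfv/K$ promised in Section~\ref{section-2-3} (orbits indexed by certain arrays or decorated Young-diagram data recording the relative position of $W$, $V_1$, $V_2$, and $L$) and Johnson's parametrization of $\mathfrak{E}/K$ from Section~\ref{section-2-1} (orbits of $\mathcal{N}(\mathfrak{s})_2^k$ together with the line $L$, indexed by the rank of $\xi$, the dimensions $\dim(L\cap\mathrm{Im}\,\xi)$, $\dim(L\cap\ker\xi)$, etc.). The bound $p\leq\max\{k,n-k,q+1\}$ is precisely the regime in which the ``extra'' component in the fiber of $\Xi$ cannot occur: in each of the three sub-cases $p\le k$, $p\le n-k$, $p\le q+1$ one checks that the combinatorial ambiguity identified in (a) is forced (there is no room for the second choice of $W$ because $V_1$ is too small relative to the relevant flag), so every fiber of $\Xi$ is a singleton; combined with surjectivity from (a), $\Xi$ is a bijection. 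An alternative, perhaps cleaner, route is a direct count: show $|\Xfv/K| = |\mathfrak{E}/K|$ in this regime by exhibiting an explicit bijection between the two combinatorial parametrizations, and then deduce bijectivity of $\Xi$ from surjectivity plus the at-most-two bound (if $|\mathrm{Irr}(\mathcal{Y})|=|\mathfrak{E}/K|$ and $\Xi$ is surjective with fibers of size $\le 2$, it must be bijective).

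The main obstacle will be the fine analysis in part (a) of the components of $\pi^{-1}(\mathfrak{O})$ and the precise identification of when the stabilizer $Z_K(L,\xi)$ fails to act transitively on them --- i.e., isolating the single combinatorial parameter responsible for the doubling. This requires choosing good normal forms for triples $(W,L,x)$ with $x^{-\theta}=\xi$ fixed, decomposing $V$ compatibly with $\mathrm{Im}\,\xi$, $\ker\xi$, $V_1$, $V_2$, and tracking how $W$ can sit inside the resulting sum of small subspaces; the dimension bookkeeping is routine but the enumeration of irreducible components of the resulting (iterated) fibration is delicate. A secondary difficulty is verifying that $\dim\pi^{-1}(\mathfrak{O})$ is independent of the choice of $(L,\xi)$ within $\mathfrak{O}$ and equals $\dim\mathcal{Y}-\dim\mathfrak{O}$, so that components of $\mathcal{Y}$ correspond bijectively to components of the fibers over orbits; this is the exotic analogue of Steinberg's ``$\dim\pi^{-1}(Gx)$ is constant'' lemma and will be handled by an explicit parametrization of the fiber.
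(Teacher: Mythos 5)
Your overall architecture (equidimensionality of $\mathcal{Y}$ from finite type, finitely many $K$-orbits in $\mathfrak{E}$, hence a well-defined map $\Xi$ sending a component to the unique orbit over whose closure it dominates) coincides with the paper's, and your part (b) sub-case idea is essentially what the paper does. However, the two load-bearing claims of part (a) are left as announced ``obstacles'' rather than proved, and the mechanism you guess for the at-most-two bound is not the one that actually works.

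First, the bound on fiber size. The paper factors $\pi$ as $\mathcal{Y}\xrightarrow{\phi}\mathcal{Z}\xrightarrow{\psi}\mathfrak{E}$ and stratifies $\mathcal{Y}$ by pairs $(\mathfrak{O},\lambda)$, where $\lambda$ is the Jordan type of the lift $x$ of $z=x^{-\theta}$. The point you are missing is that, because $P$ is maximal, every $x$ occurring satisfies $x^2=0$, and Lemma~\ref{lemma-4-4}\,(b) shows $\rk x\in\{\rk z,\rk z+1\}$; so only $\lambda=(n-\ell,\ell)^*$ and $(n-\ell-1,\ell+1)^*$ can occur. Each stratum $\mathcal{Z}^{\lambda}_{((r,s),\mu)}$ is \emph{irreducible} (the explicit normal form of Lemma~\ref{lemma-3} exhibits $\Phi(z)\cap\mathcal{O}^G_\lambda$ as an affine space or an open subset of one), the Spaltenstein fiber $\mathcal{F}_{\lambda,(k,n-k)}$ is irreducible because $P$ is maximal (Proposition~\ref{proposition-2}), and the stabilizer of $(L,z)$ is \emph{connected}, so it cannot permute components. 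Thus each of the at most two strata contributes at most one component, and whether it contributes one at all is decided by a dimension count (the ``good pair'' condition $\dim\mathcal{Z}^\lambda+\dim\mathcal{F}_{\lambda,(k,n-k)}=\dim\mathcal{Y}$). Your proposed source of the dichotomy --- a stabilizer failing to act transitively on fiber components, in RS fashion --- points in the wrong direction: the doubling comes from the two admissible Jordan types of the square-zero lift, not from component-permutation by $Z_K(L,\xi)$.

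Second, surjectivity of $\Xi$ does not follow from surjectivity of $\pi$ alone, as you assert. From $\pi(\mathcal{Y})=\mathfrak{E}$ one only gets that each orbit $\mathfrak{O}$ lies in $\overline{\mathfrak{O}_\mathcal{C}}$ for some component $\mathcal{C}$, not that $\mathfrak{O}=\mathfrak{O}_\mathcal{C}$ for some $\mathcal{C}$: a priori $\overline{\pi^{-1}(\mathfrak{O})}$ could have dimension strictly less than $\dim\mathcal{Y}$ and contain no component. The paper rules this out by the explicit computation (Section~\ref{section-5-4}, Cases 1--3) that the pair $(\mathfrak{O},(n-\ell,\ell)^*)$ is always good, using Johnson's dimension formula for $\dim\mathfrak{O}^K_{((r,s),\mu)}$ together with the fiber dimensions from Lemma~\ref{lemma-3}. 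This same computation identifies exactly when the second pair $(\mathfrak{O},(n-\ell-1,\ell+1)^*)$ is good, namely $\ell=q\leq p-2$ and $\ell<\min\{k,n-k\}$ (type (II)$^*$), which is impossible when $p\leq\max\{k,n-k,q+1\}$ --- this is the content of your part (b), but it cannot be checked until the ambiguity is identified as above.
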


By Theorem \ref{theorem-1} and Section \ref{section-1-3}, we obtain the following corollary, which can be viewed as an exotic version of Steinberg correspondence for the symmetric pair of type AIII.

\begin{corollary}[cf., Corollary~\ref{corollary-2-11}]\label{corollary-1}
{\rm (a)} There is an explicit surjection $\Phi:\Xfv/K\to\mathfrak{E}/K$, $\mathbb{O}\mapsto
\mathfrak{O}_{\overline{T_\mathbb{O}^*\Xfv}}$, whose fibers have at most two elements.  

{\rm (b)} In the case where $p\leq \max\{k,n-k,q+1\}$, the map $\Phi$ is bijective.
\end{corollary}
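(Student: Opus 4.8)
\textbf{Proof plan for Corollary~\ref{corollary-1}.}
The plan is to deduce the corollary directly from Theorem~\ref{theorem-1} together with the structure of the conormal variety recalled in Section~\ref{section-1-3}. The first step is to invoke the finite‑type property of $\Xfv$ (from Proposition~\ref{proposition-1}(b), since $G/P$ is a Grassmannian) so that decomposition~\eqref{1.1} applies: the irreducible components of $\mathcal{Y}$ are exactly the closures $\overline{T^*_\mathbb{O}\Xfv}$ as $\mathbb{O}$ ranges over $\Xfv/K$, and the assignment $\mathbb{O}\mapsto\overline{T^*_\mathbb{O}\Xfv}$ is a bijection $\Xfv/K\xrightarrow{\sim}\mathrm{Irr}(\mathcal{Y})$. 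Composing this bijection with the map $\Xi\colon\mathrm{Irr}(\mathcal{Y})\to\mathfrak{E}/K$ of Theorem~\ref{theorem-1} yields the desired map $\Phi\colon\Xfv/K\to\mathfrak{E}/K$, $\mathbb{O}\mapsto\mathfrak{O}_{\overline{T^*_\mathbb{O}\Xfv}}$.

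For part (a), surjectivity of $\Phi$ is immediate because it is the composite of a bijection with the surjection $\Xi$; likewise, since the first map is a bijection, each fiber of $\Phi$ is in bijection with a fiber of $\Xi$, which by Theorem~\ref{theorem-1}(a) has at most two elements. For part (b), under the hypothesis $p\leq\max\{k,n-k,q+1\}$ Theorem~\ref{theorem-1}(b) asserts that $\Xi$ is bijective, so $\Phi$, being a composite of two bijections, is bijective as well. The word ``explicit'' in the statement is justified by the fact that both the parametrization of $\Xfv/K$ by conormal bundles and the definition of $\Xi$ (via $\mathcal{C}\subset\overline{\pi^{-1}(\mathfrak{O}_\mathcal{C})}$) are explicit; in the combinatorial cases of Section~\ref{section-2-3} one can trace through the orbit descriptions to write $\Phi$ down concretely.

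There is essentially no real obstacle here: the corollary is a formal consequence of Theorem~\ref{theorem-1} and the general equidimensionality discussion of Section~\ref{section-1-3}. The only point requiring a little care is to make sure the hypotheses of~\eqref{1.1} are met—namely that $\Xfv$ is genuinely of finite type—which is exactly the content of Proposition~\ref{proposition-1}(b) in the present setup, and to note that the identification $\mathrm{Irr}(\mathcal{Y})\cong\Xfv/K$ is canonical, so that no choices intervene in the definition of $\Phi$. The bulk of the mathematical work is thus entirely contained in Theorem~\ref{theorem-1}, and this corollary merely repackages it in terms of the orbit sets rather than the components of $\mathcal{Y}$.
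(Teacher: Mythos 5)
Your proposal is correct and follows exactly the paper's own (implicit) argument: the corollary is obtained by composing the canonical bijection $\Xfv/K\xrightarrow{\sim}\mathrm{Irr}(\mathcal{Y})$, $\mathbb{O}\mapsto\overline{T^*_\mathbb{O}\Xfv}$ (valid since $\Xfv$ is of finite type by Proposition~\ref{proposition-1}) with the map $\Xi$ of Theorem~\ref{theorem-1}, so that surjectivity, the fiber bound, and the bijectivity in case (b) all transfer directly.
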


Actually 
Theorem~\ref{theorem-1} and Corollary~\ref{corollary-1} can be stated in a more precise way. 
Indeed from Johnson \cite{Johnson.2010.MR} 
we have a parametrization of the $K$-orbits of the exotic nilpotent cone $\mathfrak{E}$ 
by so-called striped signed diagrams; see Section \ref{section-2-1}. 
In Section \ref{section-2-2} we state refined versions of Theorem \ref{theorem-1} and Corollary \ref{corollary-1} (Theorem~\ref{theorem-2} and Corollary \ref{corollary-2-11}), 
which include a characterization of $K$-orbits $\mathfrak{O}\subset\mathfrak{E}$ 
such that $\Xi^{-1}(\mathfrak{O})$ and $\Phi^{-1}(\mathfrak{O})$ are singletons (resp., pairs).

Furthermore in Section \ref{section-2-3} we give a combinatorial parametrization of the $K$-orbits of the double flag variety $\Xfv$ and, for certain values of $p,q,k$, we 
explicitly compute the correspondence $\Phi$ of Corollary \ref{corollary-1}. Although it is a priori computable, the correspondence $\Phi$ appears to be not straightforward in general.


Let us mention relevant works in the literature.  
In \cite{Henderson.Trapa.2012},  Henderson and Trapa consider the symmetric pair $(\GL_{2n}(\mathbb{C}), \Sp_{2n}(\mathbb{C}))$
and the enhanced flag variety 
$\hat{\mathfrak{X}}:=\mathbb{C}^{2n}\times \bigl( \GL_{2n}(\mathbb{C})/B \bigr) $, 
whose projectivization is the double flag variety $\hat{\mathfrak{X}}':=\Sp_{2n}(\mathbb{C})/Q\times\GL_{2n}(\mathbb{C})/B$ for $Q$ mirabolic.  
Their results include a correspondence between $ K $-orbits in $ \hat{\mathfrak{X}} $ and 
$K$-orbits in the exotic nullcone. 
In this respect they use the description of $\hat{\mathfrak{X}}/\Sp_{2n}(\mathbb{C})$ due to Matsuki \cite{Matsuki.2013} and the description of the exotic nilpotent cone of Kato \cite{Kato.2009}. 
They also make use of the enhanced nilpotent cone studied by Achar and Henderson \cite{Achar.Henderson.2008}. 
A significant difference with our situation is that 
the variety $\hat{\mathfrak{X}}$ (as well as $\hat{\mathfrak{X}}'$) is endowed with 
a diagonal action of the full group  $\GL_{2n}(\mathbb{C})$. 
Our approach is therefore quite different from that of \cite{Henderson.Trapa.2012}.

We also mention that there are some related works on triple flag varieties, which are special cases of the double flag varieties of symmetric pairs as seen from Example \ref{E1.2}\,{\rm (b)}.  
Specifically Travkin \cite{Travkin.2009} and Rosso \cite{Rosso.2012} describe geometric correspondences (``mirabolic RSK correspondences'') relating triple flag varieties and enhanced nilpotent varieties.

\section{Precise statement of main result}

In this section we give a detailed formulation of the results outlined in Section \ref{section-1-5}. 
In Section \ref{section-2-1}
we parametrize (following Johnson \cite{Johnson.2010.MR})  the $K$-orbits of the exotic nilpotent cone $\mathfrak{E}$ in the case of the exotic Grassmannian $\Xfv$. In Section \ref{section-2-2} we state our main result on the correspondence between the $K$-orbits of $\mathfrak{E}$ and $\Xfv$ (Theorem \ref{theorem-2}).
In Section \ref{section-2-3}, we parametrize the $K$-orbits of $\Xfv$ by so-called $(1,2)$-tableaux, and we give a combinatorial interpretation of the correspondence of Theorem \ref{theorem-2} in certain particular cases. All results presented in this section are proved in the subsequent sections.

\subsection{Exotic nilpotent cone and its $K$-orbits}

\label{section-2-1}

We consider the notation introduced in Notation \ref{notation-1-5}.
In particular $\mathfrak{s}$ consists of elements of the form 
$ x = 
\mattwo{0}{a}{b}{0}
$ with $a\in\mathcal{L}(V_2,V_1)$ and $b\in\mathcal{L}(V_1,V_2)$, and we have $\rk x=\rk a+\rk b$.
We consider the diagonal action of $K=\GL(V_1)\times\GL(V_2)$ on the product 
$\mathbb{P}(V_1)\times\mathcal{N}(\mathfrak{s})$.

\begin{proposition}[see \cite{Johnson.2010.MR}]
\label{proposition-2-1}
$\mathbb{P}(V_1)\times\mathcal{N}(\mathfrak{s})$ has a finite number of $K$-orbits.
\end{proposition}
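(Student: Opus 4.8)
The plan is to exhibit an explicit finite list of representatives for the $K$-orbits on $\mathbb{P}(V_1)\times\mathcal{N}(\mathfrak{s})$. Since this is a product, it is natural to understand first the $K$-orbits on $\mathcal{N}(\mathfrak{s})$ alone, and then, for each such orbit, to understand the finitely many orbits of its stabilizer on $\mathbb{P}(V_1)$. So the first step will be to recall that $\mathcal{N}(\mathfrak{s})$ has finitely many $K$-orbits. An element $x=\mattwo{0}{a}{b}{0}\in\mathfrak{s}$ is nilpotent iff $ab\in\mathcal{L}(V_1)$ and $ba\in\mathcal{L}(V_2)$ are nilpotent; the pair $(a,b)$ is acted on by $\GL(V_1)\times\GL(V_2)$ via $(g_1,g_2)\cdot(a,b)=(g_1ag_2^{-1},g_2bg_1^{-1})$, and this is exactly the setting of representations of the $A_2$-type quiver $V_1\rightleftarrows V_2$ with the nilpotency (no oriented cycle eigenvalue) constraint; finiteness of orbits here is classical (it amounts to a finite list of combinatorial invariants — the Jordan types of $ab$ and $ba$ together with how $\mathrm{Im}\,a$, $\ker b$, etc. interact), and it is precisely Johnson's parametrization by striped signed diagrams referenced later in the paper.

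Second, I would fix a representative $x_0$ of a $K$-orbit in $\mathcal{N}(\mathfrak{s})$ and analyze the stabilizer $Z_K(x_0)\subset\GL(V_1)\times\GL(V_2)$. The key observation is that $Z_K(x_0)$, although possibly disconnected, is a linear algebraic group acting linearly on $V_1$, hence on $\mathbb{P}(V_1)$; I must show this action has finitely many orbits. For this I would use the normal form of $x_0$: it decomposes $V_1$ (and $V_2$) into a direct sum of indecomposable pieces of a few standard types, so that $Z_K(x_0)$ contains a large torus together with block-upper-triangular unipotent parts and a product of general linear groups permuting isomorphic indecomposable summands. The resulting group is a (typically non-reductive) \emph{spherical-type} subgroup whose action on $\mathbb{P}(V_1)=\mathbb{P}(\bigoplus_i V_1^{(i)})$ has finitely many orbits because, up to the $\GL$-factors acting within each isotypic block, a point of $\mathbb{P}(V_1)$ is determined by which summands its homogeneous coordinates are supported on together with finitely many "slope" invariants governed by the unipotent part — again a finite combinatorial datum. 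Concretely one can reduce to a few small rank cases and read off the orbits directly.

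An alternative, and probably cleaner, route is to avoid the stabilizer analysis entirely and argue symmetrically: view $\mathbb{P}(V_1)$ as $K/Q$ for $Q$ mirabolic, so $\mathbb{P}(V_1)\times\mathcal{N}(\mathfrak{s})$ has finitely many $K$-orbits iff $\mathcal{N}(\mathfrak{s})$ has finitely many $Q$-orbits (equivalently, iff $\mathcal{N}(\mathfrak{s})\times\mathbb{P}(V_1)$ does). Then $Q$-orbits on $\mathcal{N}(\mathfrak{s})$ correspond to orbits of a slightly enhanced $A_2$-quiver problem — one tracks the additional distinguished line $L\subset V_1$ — and one checks this "enhanced" quiter still has finite type (its underlying combinatorics is a decorated version of striped signed diagrams). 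This reduces everything to a representation-theoretic finiteness statement of the kind established by Johnson \cite{Johnson.2010.MR}; indeed the cleanest proof is simply to invoke that reference, since the striped signed diagrams there are defined precisely as a parametrization of the $K$-orbits of $\mathbb{P}(V_1)\times\mathcal{N}(\mathfrak{s})$.

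The main obstacle will be the bookkeeping in the stabilizer action: while finiteness is "morally clear" because everything is governed by discrete quiver-type invariants, writing down a clean proof requires either a careful enumeration of indecomposable types and their contribution to $Z_K(x_0)$, or an appeal to a general finiteness criterion (e.g.\ that a mirabolic subgroup acting on the nilpotent variety of a type-$A$-related symmetric space stays of finite type). I expect the paper takes the second route and simply cites Johnson, so the "proof" here is essentially a pointer to \cite{Johnson.2010.MR} together with the identification of that paper's objects with the $K$-orbits of $\mathbb{P}(V_1)\times\mathcal{N}(\mathfrak{s})$.
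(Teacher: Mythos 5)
Your fallback route is exactly what the paper does: Proposition~\ref{proposition-2-1} is stated without any independent proof and is simply quoted from Johnson's thesis \cite{Johnson.2010.MR}, whose enhanced nilpotent representations of the two-vertex cyclic quiver are precisely the $K$-orbits on $V_1\times\mathcal{N}(\mathfrak{s})$, and finiteness there passes at once to $\mathbb{P}(V_1)\times\mathcal{N}(\mathfrak{s})$ since the quotient map $V_1\setminus\{0\}\to\mathbb{P}(V_1)$ is $K$-equivariant. Your sketched stabilizer/``slope invariant'' analysis is not needed to match the paper and is not by itself a complete argument (it would amount to redoing Johnson's, or Achar--Henderson-type, classification), and the quiver involved is the cyclic quiver $V_1\rightleftarrows V_2$ rather than type $A_2$; but the proposal's conclusion and its reliance on \cite{Johnson.2010.MR} coincide with the paper's treatment.
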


Furthermore Johnson's result in \cite{Johnson.2010.MR} contains a combinatorial parametrization of the orbits. 

In this paper, we focus on the action of $K$
on the 2-step nilpotent cone 
$\mathcal{N}(\mathfrak{s})_2 := \{ x = 
\mattwo{0}{a}{b}{0}
\in\mathfrak{s} : x^2 = 0 \} $
and hence on the exotic nilpotent cone $\mathfrak{E}=\mathbb{P}(V_1)\times\mathcal{N}(\mathfrak{s})_2^k$ 
(see Notation \ref{notation-1-5}\,{\rm (c)}).
Note that
\[
\mathcal{N}(\mathfrak{s})_2 = \Big\{
\mattwo{0}{a}{b}{0}
\in\mathfrak{s}:ab=0\quad\mbox{and}\quad ba=0\Big\},\]
whence $\rk a+\rk b\leq \min\{\dim V_1,\dim V_2\}=\min\{p,q\}$
whenever 
$
\mattwo{0}{a}{b}{0}
\in\mathcal{N}(\mathfrak{s})_2 $.

\begin{definition}
\label{definition-2-1}
{\rm (a)} Let $\Lambda_2$ be the set of pairs of nonnegative integers $(r,s)$ such that $r+s\leq\min\{p,q\}$. \\
{\rm (b)} Let $\Pi_2$ be the set of pairs $((r,s),\mu)$ where $(r,s)\in\Lambda_2$ and $\mu=(\mu_1,\ldots,\mu_{n-(r+s)})$ is a sequence satisfying the following conditions:
\begin{equation}
\left\{\begin{array}{cl}
{\rm (a)} & \mu_1=\ldots=\mu_r=1; \\ 
{\rm (b)} & \mu_{r+1}=\ldots=\mu_{r+s}\in\{0,2\}; \\ 
{\rm (c)} & \mu_{r+s+1}=\ldots=\mu_p\in\{-1,1\}; \\
{\rm (d)} & \mu_{p+1}=\ldots=\mu_{n-(r+s)}=0; \\
{\rm (e)} & \mbox{at least one term $\mu_i$ is $\geq 1$ (automatically satisfied if $r\not=0$)}; \\
{\rm (f)} & \{-1,2\}\not\subset\{\mu_1,\ldots,\mu_{n-(r+s)}\}. 
\end{array}\right.
\end{equation}
Furthermore we call $((r,s),\mu)$ of type (I) if $\mu_{r+1},\ldots,\mu_p\leq 0$;
of type (II) if $r+s\not=p$, $\mu_{r+1}=\ldots=\mu_{r+s}=0$, and $\mu_{r+s+1}=\ldots=\mu_p=1$;
of type (III) if $s\not=0$ and $\mu_{r+1}=\ldots=\mu_{r+s}=2$.

\end{definition}



\begin{notation}
{\rm (a)} It is convenient to represent a pair $(r,s)\in\Lambda_2$ by a {\em signed Young diagram $\tilde\lambda_{(r,s)}$ of signature $(p,q)$} defined as follows: the first $r$ rows of $\tilde\lambda_{(r,s)}$ contain the symbol $\smallyoung{+-}$; the next $s$ rows contain $\smallyoung{-+}$; the next $p-(r+s)$ rows contain $\smallyoung{+}$; the last $q-(r+s)$ rows contain $\smallyoung{-}$. The diagram $\tilde\lambda_{(r,s)}$ has $n-(r+s)$ left-justified rows, and it contains $p$ symbols ``$+$'' and $q$ symbols ``$-$''. \\
{\rm (b)} We represent an element $((r,s),\mu)\in\Pi_2$ by a {\em striped signed diagram} $\tilde\lambda_{((r,s),\mu)}$ defined as follows. Consider a grid pattern whose rows are numbered $1,2,\ldots$ from top to bottom and whose columns are numbered $-2,-1,0,1,2$ from left to right.
Then $\tilde\lambda_{((r,s),\mu)}$ is the diagram obtained from $\tilde\lambda_{(r,s)}$
by shifting horizontally the rows so that, for every $i\in\{1,\ldots,n-(r+s)\}$, the first box of the $i$-th row of $\tilde\lambda_{((r,s),\mu)}$ lies in the column $-\mu_i$.
\end{notation}



\begin{example}
{\rm (a)} Let $(p,q)=(4,5)$ and $(r,s)=(2,1)$. Then $\tilde\lambda_{(r,s)}=\smallyoung{+-,+-,-+,+,-,-}$. \\
{\rm (b)} Let $(p,q)=(4,5)$. The set $\Pi_2$ comprises three elements of the form $((2,1),\mu)$, which correspond to the following striped signed diagrams.
\[\tilde\lambda_{((2,1),(1^2,0,-1,0^2))}=\smallyoung{+\barm,+\barm,:\barm{+},::+,:\barm,:\barm}\,,\quad
\tilde\lambda_{((2,1),(1^2,0,1,0^2))}=\smallyoung{\pbar{-},\pbar{-},:\barm{+},\pbar,:\barm,:\barm}\,,\quad
\tilde\lambda_{((2,1),(1^2,2,1,0^2))}=\smallyoung{:+\barm,:+\barm,-\pbar,:\pbar,::\barm,::\barm}\,.\]
They are respectively of types (I), (II), (III). In the pictures the column number 0 is the first one on the right of the thick line.
\end{example}

\begin{remark}
Note that an element $((r,s),\mu)$ in $\Pi_2$ (or, equivalently, its representation $\tilde\lambda_{((r,s),\mu)}$) is actually characterized by the numbers $p,q,r,s$ and the type (I), (II), or (III): the sequence $\mu$ can be recovered from this information.
As it is formulated, Definition \ref{definition-2-1} is a faithful translation of \cite[Definition 4.1]{Johnson.2010.MR} to our situation (the objects defined in \cite[Definition 4.1]{Johnson.2010.MR} allow to classify a larger class of orbits). Note also that the sequence $\mu$ is needed in the dimension formula stated in Proposition \ref{proposition-2-5}\,{\rm (c)} below.
\end{remark}

The next proposition follows from \cite[Theorem 4.12 and Corollary 5.7]{Johnson.2010.MR}.

\begin{proposition}
\label{proposition-2-5}
{\rm (a)} For $(r,s)\in\Lambda_2$, let $\mathcal{O}_{(r,s)}^K$ be the set of elements $\left(\begin{smallmatrix}
0 & a \\ b & 0
\end{smallmatrix}\right)\in\mathcal{N}(\mathfrak{s})_2$ such that $\rk a=r$ and $\rk b=s$. Then $\mathcal{O}_{(r,s)}^K$ is a $K$-orbit of $\mathcal{N}(\mathfrak{s})_2$. Conversely every $K$-orbit of $\mathcal{N}(\mathfrak{s})_2$ is of the form $\mathcal{O}_{(r,s)}^K$ for a unique pair $(r,s)\in\Lambda_2$. \\
{\rm (b)} Let $\mathfrak{O}_{((r,s),\mu)}^K$ be the set of pairs $(L,\left(\begin{smallmatrix}
0 & a \\ b & 0
\end{smallmatrix}\right))\in\mathbb{P}(V_1)\times\mathcal{O}_{(r,s)}^K$ such that: 
\[
\left\{
\begin{array}{ll}
L\subset\mathrm{Im}\,a & \mbox{if $((r,s),\mu)$ is of type (I)};  \\[1mm]
L\subset\ker b,\ L\not\subset\mathrm{Im}\,a & \mbox{if $((r,s),\mu)$ is of type (II)}; \\[1mm]
L\not\subset\ker b & \mbox{if $((r,s),\mu)$ is of type (III)}.
\end{array}
\right.
\]
Then $\mathfrak{O}_{((r,s),\mu)}^K$ is a $K$-orbit of $\mathbb{P}(V_1)\times\mathcal{N}(\mathfrak{s})_2$. Conversely every $K$-orbit of $\mathbb{P}(V_1)\times\mathcal{N}(\mathfrak{s})_2$
is of the form $\mathfrak{O}_{((r,s),\mu)}^K$ for a unique pair $((r,s),\mu)\in\Pi_2$. \\
{\rm (c)} $\displaystyle \dim \mathcal{O}_{(r,s)}^K=(r+s)(n-(r+s))$ and $\displaystyle \dim \mathfrak{O}_{((r,s),\mu)}^K=(r+s)(n-(r+s))+\sum_{i=1}^{n-(r+s)}\left\lceil\frac{\mu_i}{2}\right\rceil-1$.
\end{proposition}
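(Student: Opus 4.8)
The plan is to prove Proposition~\ref{proposition-2-5} by reducing everything to linear algebra over the decomposition $V=V_1\oplus V_2$ and then citing Johnson's theorems for the dimension count. For part~(a), an element of $\mathcal{N}(\mathfrak{s})_2$ is a pair $(a,b)$ with $a\in\mathcal{L}(V_2,V_1)$, $b\in\mathcal{L}(V_1,V_2)$ satisfying $ab=0$ and $ba=0$; the group $K=\GL(V_1)\times\GL(V_2)$ acts by $(g_1,g_2)\cdot(a,b)=(g_1ag_2^{-1},g_2bg_1^{-1})$. First I would observe that $\rk a$ and $\rk b$ are $K$-invariants, so the sets $\mathcal{O}_{(r,s)}^K$ are $K$-stable; the content is that each is a single orbit. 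To see transitivity, fix $(a,b)$ with $\rk a=r$, $\rk b=s$: the conditions $ab=0$, $ba=0$ say exactly $\mathrm{Im}\,b\subset\ker a$ and $\mathrm{Im}\,a\subset\ker b$. Since $r+s\le\min\{p,q\}$ (forced by $\mathrm{Im}\,b\subset\ker a$, which gives $s\le p-r$ in $V_2\to V_1$ and symmetrically), one can choose a basis of $V_1$ whose first $s$ vectors span $\mathrm{Im}\,b$, next $r$ are mapped by a fixed section onto a complement of $\ker a$, rest span a complement; similarly for $V_2$; this normalizes $(a,b)$ to a standard pair depending only on $(r,s)$. Thus $K$ acts transitively on $\mathcal{O}_{(r,s)}^K$, and the parametrization by $\Lambda_2$ is a bijection. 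This part is essentially self-contained and not the obstacle.

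For part~(b) I would first check that the three defining conditions are mutually exclusive and exhaustive on $\mathbb{P}(V_1)\times\mathcal{O}_{(r,s)}^K$, after excluding the degenerate combinations ruled out by conditions (e),(f) of Definition~\ref{definition-2-1}; note $\mathrm{Im}\,a\subset\ker b$ always holds, so ``$L\subset\mathrm{Im}\,a$'' implies ``$L\subset\ker b$'', which is why type~(I) is ``$L\subset\mathrm{Im}\,a$'', type~(II) is ``$L\subset\ker b$ but $L\not\subset\mathrm{Im}\,a$'', and type~(III) is the complementary open condition ``$L\not\subset\ker b$'' (each making sense only when $r$, resp.\ the relevant parameters, are nonzero, matching the type restrictions). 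Then, working inside a single orbit $\mathcal{O}_{(r,s)}^K$, the stabilizer $Z_K(a,b)$ of a standard pair acts on $\mathbb{P}(V_1)$, and I would verify it acts transitively on each of the three pieces by an explicit argument: the stabilizer contains $\GL$ of each of the three subspaces $\mathrm{Im}\,b$, a complement of $\ker a$ in $V_1$ inside $\ker b$, and a complement of $\ker b$, up to the coupling with $V_2$, which is more than enough to move any line within its stratum to a standard one. Conversely the three conditions together with $(r,s)$ are a complete set of invariants of the $K$-action on $\mathbb{P}(V_1)\times\mathcal{N}(\mathfrak{s})_2$, giving the bijection with $\Pi_2$ via the dictionary of Definition~\ref{definition-2-1} and the type labels; I would spell out that dictionary carefully (which $((r,s),\mu)$ corresponds to which type) so that the count matches.

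For part~(c) I would compute the dimensions directly rather than unwinding Johnson's formula. We have $\dim\mathcal{O}_{(r,s)}^K=\dim K-\dim Z_K(a,b)$; alternatively, $\mathcal{O}_{(r,s)}^K$ fibers over the variety of pairs (a subspace $\mathrm{Im}\,a$ of dimension $r$, a subspace $\mathrm{Im}\,b$ of dimension $s$ with $\mathrm{Im}\,a\subset\ker b$, $\mathrm{Im}\,b\subset\ker a$) and a standard orbit-dimension bookkeeping yields $(r+s)(n-(r+s))$; this is a short Grassmannian-incidence computation. For $\mathfrak{O}_{((r,s),\mu)}^K$ one adds the dimension of the stratum of lines $L$ in $\mathbb{P}(V_1)$: a point for type considerations, $\dim\mathbb{P}(\mathrm{Im}\,a)=r-1$ in type~(I), $\dim\mathbb{P}(\ker b)-\dim\mathbb{P}(\mathrm{Im}\,a)$-type open subset of dimension $(p-s)-1$ in type~(II), and the open subset of $\mathbb{P}(V_1)\setminus\mathbb{P}(\ker b)$ of dimension $p-1$ in type~(III). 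One then checks case by case that these agree with $(r+s)(n-(r+s))+\sum_{i=1}^{n-(r+s)}\lceil\mu_i/2\rceil-1$, using that the $\mu_i$ are $r$ ones, $s$ copies of $0$ or $2$, $(p-(r+s))$ copies of $\pm1$, and the rest zeros, so $\sum\lceil\mu_i/2\rceil$ equals $r+0+(p-(r+s))=p-s$ in type~(I) with all lower entries $\le 0$ (giving dim $=(r+s)(n-(r+s))+(p-s)-1$? — here I must be careful and recompute per type), and similarly $s$ (from the $2$'s) shifts the count in types~(II),(III). The main obstacle is precisely this last bookkeeping: matching the three geometric stratum-dimensions with the single uniform formula across all the sign patterns allowed by Definition~\ref{definition-2-1}, and making sure the edge cases excluded by conditions (e),(f) are exactly the ones where a naive stratum would be empty or coincide with another. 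I would organize this as a table indexed by type, listing $\sum\lceil\mu_i/2\rceil$ and the line-stratum dimension side by side, and verify equality in each row; everything upstream (transitivity, invariants) is routine linear algebra, and the real care is in this final combinatorial reconciliation, which is why I would cite \cite[Theorem 4.12 and Corollary 5.7]{Johnson.2010.MR} as the authoritative source for the dimension formula and only indicate the translation.
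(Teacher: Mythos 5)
Your proposal is correct in outline but takes a genuinely different route from the paper: the paper offers no proof of Proposition \ref{proposition-2-5} at all beyond the sentence introducing it, which derives the statement directly from \cite[Theorem 4.12 and Corollary 5.7]{Johnson.2010.MR} (Johnson classifies the $K$-orbits of $V_1\times\mathcal{N}(\mathfrak{s})$, and the proposition is the translation of his result to $\mathbb{P}(V_1)\times\mathcal{N}(\mathfrak{s})_2$). Your self-contained linear-algebra argument is therefore a genuine alternative rather than a reconstruction, and it is sound: for (a), the pair $(\rk a,\rk b)$ is a complete invariant because $ab=ba=0$ reduces $(a,b)$ to a normal form adapted to the flags $\mathrm{Im}\,a\subset\ker b\subset V_1$ and $\mathrm{Im}\,b\subset\ker a\subset V_2$; for (b), the essential point, which you correctly identify, is that $Z_K(a,b)$ acts on $\mathbb{P}(V_1)$ with exactly the three orbits cut out by the flag $\mathrm{Im}\,a\subset\ker b$, and that the strata which are empty (when $r=0$, $r+s=p$, or $s=0$, respectively) are precisely the combinations excluded by conditions (e) and (f) of Definition \ref{definition-2-1}. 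For (c), your incidence count gives $\dim\mathcal{O}^K_{(r,s)}=r(p-r)+s(q-s)+r(q-s)+s(p-r)=(r+s)(n-(r+s))$ as required. In the final bookkeeping you hesitate, and indeed the value you float for type (I) is wrong: with $\ell=r+s$ one has $\mu=(1^r,0^s,(-1)^{p-\ell},0^{q-\ell})$ in type (I), $(1^r,0^s,1^{p-\ell},0^{q-\ell})$ in type (II), and $(1^r,2^s,1^{p-\ell},0^{q-\ell})$ in type (III), so $\sum_i\lceil\mu_i/2\rceil$ equals $r$, $p-s$, and $p$ respectively (the entries $-1$ contribute $\lceil -1/2\rceil=0$, not $1$), which matches the line-stratum dimensions $\dim\mathbb{P}(\mathrm{Im}\,a)=r-1$, $\dim\mathbb{P}(\ker b)=p-s-1$, and $\dim\mathbb{P}(V_1)=p-1$. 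With that table corrected, your argument is complete and has the advantage of being independent of \cite{Johnson.2010.MR}; the paper's citation buys brevity and defers the real work (exhaustiveness of the classification and the dimension formula) to Johnson's thesis.
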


\begin{remark}
The description of $K$-orbits presented here is adapted from the one in \cite[\S4--5]{Johnson.2010.MR}, from which it is however slightly different. 
Indeed, the $K$-orbits of  $V_1\times\mathcal{N}(\mathfrak{s})_2$ are classified in \cite{Johnson.2010.MR} whereas we are concerned with the product $\mathbb{P}(V_1)\times\mathcal{N}(\mathfrak{s})_2$.
\end{remark}

\begin{corollary}\label{corollary-2-6}
Let $\Pi_2^k=\{((r,s),\mu)\in\Pi_2:r+s\leq\min\{k,n-k\}\}$. Then we have 
\[\mathfrak{E}=\bigsqcup_{((r,s),\mu)\in\Pi_2^k}\mathfrak{O}_{((r,s),\mu)}^K\]
which is the decomposition of the exotic nilpotent cone $\mathfrak{E}$ of Notation \ref{notation-1-5}\,{\rm (c)} into $K$-orbits.
\end{corollary}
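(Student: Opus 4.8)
The plan is to derive the statement directly from Proposition~\ref{proposition-2-5} together with the rank identity $\rk\bigl(\begin{smallmatrix} 0 & a \\ b & 0\end{smallmatrix}\bigr)=\rk a+\rk b$ recorded at the beginning of Section~\ref{section-2-1}. First I would note that this identity implies that an element $x=\bigl(\begin{smallmatrix} 0 & a \\ b & 0\end{smallmatrix}\bigr)\in\mathcal{N}(\mathfrak{s})_2$ lies in $\mathcal{N}(\mathfrak{s})_2^k$ if and only if $\rk a+\rk b\leq\min\{k,n-k\}$. Combining this with Proposition~\ref{proposition-2-5}\,{\rm (a)} — which gives $\mathcal{N}(\mathfrak{s})_2=\bigsqcup_{(r,s)\in\Lambda_2}\mathcal{O}_{(r,s)}^K$ with $\mathcal{O}_{(r,s)}^K$ consisting of those $x$ with $(\rk a,\rk b)=(r,s)$ — yields
\[
\mathcal{N}(\mathfrak{s})_2^k=\bigsqcup_{\substack{(r,s)\in\Lambda_2\\ r+s\leq\min\{k,n-k\}}}\mathcal{O}_{(r,s)}^K .
\]

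Next I would pass to the product with $\mathbb{P}(V_1)$ and invoke Proposition~\ref{proposition-2-5}\,{\rm (b)}: for each fixed $(r,s)\in\Lambda_2$, the orbits $\mathfrak{O}_{((r,s),\mu)}^K$ with $((r,s),\mu)\in\Pi_2$ all project onto $\mathcal{O}_{(r,s)}^K$, are pairwise distinct $K$-orbits, and together exhaust $\mathbb{P}(V_1)\times\mathcal{O}_{(r,s)}^K$; thus $\mathbb{P}(V_1)\times\mathcal{O}_{(r,s)}^K=\bigsqcup_{\mu:\,((r,s),\mu)\in\Pi_2}\mathfrak{O}_{((r,s),\mu)}^K$. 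Substituting this into $\mathfrak{E}=\mathbb{P}(V_1)\times\mathcal{N}(\mathfrak{s})_2^k$ and using the displayed decomposition of $\mathcal{N}(\mathfrak{s})_2^k$ gives
\[
\mathfrak{E}=\bigsqcup_{\substack{((r,s),\mu)\in\Pi_2\\ r+s\leq\min\{k,n-k\}}}\mathfrak{O}_{((r,s),\mu)}^K=\bigsqcup_{((r,s),\mu)\in\Pi_2^k}\mathfrak{O}_{((r,s),\mu)}^K ,
\]
and since each $\mathfrak{O}_{((r,s),\mu)}^K$ is a $K$-orbit this is exactly the orbit decomposition of $\mathfrak{E}$.

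There is no genuine obstacle here; the argument is pure bookkeeping once Proposition~\ref{proposition-2-5} is available. The only point to state carefully is the matching of index sets: the condition $r+s\leq\min\{k,n-k\}$ imposed by membership in $\mathcal{N}(\mathfrak{s})_2^k$, added to the condition $r+s\leq\min\{p,q\}$ already built into $\Lambda_2$ (hence into $\Pi_2$), is precisely the defining condition of $\Pi_2^k$. It is also worth remarking that $\mathfrak{E}$ is $K$-stable, so a $K$-orbit that meets $\mathfrak{E}$ is contained in $\mathfrak{E}$; this is what guarantees that the listed orbits exhaust $\mathfrak{E}$ and no others intervene.
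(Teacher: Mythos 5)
Your argument is correct and matches the paper's (implicit) reasoning: the corollary is stated there as an immediate consequence of Proposition~\ref{proposition-2-5} together with the rank identity $\rk x=\rk a+\rk b$ and the definition of $\mathcal{N}(\mathfrak{s})_2^k$, which is exactly the bookkeeping you carry out. Spelling out the matching of index sets and the $K$-stability of $\mathfrak{E}$ is fine but adds nothing beyond what the paper takes as evident.
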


\subsection{Main results}\label{section-2-2}

The setting is explained in Notation \ref{notation-1-5}.
In particular we consider the conormal variety
$\mathcal{Y}\subset\Gr_k(V)\times\mathbb{P}(V_1)\times\mathcal{N}(\mathfrak{g})$ and the exotic nilpotent cone $\mathfrak{E}=\mathbb{P}(V_1)\times\mathcal{N}(\mathfrak{s})_2^k\subset\mathbb{P}(V_1)\times\mathcal{N}(\mathfrak{s})$.
Recall the projection $\mathfrak{g}\to\mathfrak{s}$, $x\mapsto x^{-\theta}$ (see Notation \ref{notation-1-5}).
As mentioned in Section \ref{section-1-5}, the following statement is valid. We prove it in~Section~\ref{section-3}.

\begin{proposition}
\label{proposition-2-8}
The map
\[\pi:\mathcal{Y}\to\mathfrak{E},\ (W,L,x)\mapsto (L,x^{-\theta})\]
is well defined and surjective.
\end{proposition}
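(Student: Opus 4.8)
The plan is to verify the two claims of Proposition~\ref{proposition-2-8} separately: first that $\pi$ lands in $\mathfrak{E}$ (well-definedness), and second that every point of $\mathfrak{E}$ is hit (surjectivity). For well-definedness, let $(W,L,x)\in\mathcal{Y}$, so that $\mathrm{Im}\,x\subset W\subset\ker x$ and $\mathrm{Im}\,x^\theta\subset L\subset\ker x^\theta$. Writing $x=\mattwo{x_1}{a}{b}{x_2}$, the first condition $W\subset\ker x$ combined with $\mathrm{Im}\,x\subset W$ gives $x^2=0$, hence $x$ is nilpotent; I would then show that $x^\theta=\mathrm{diag}(x_1,x_2)$ is also nilpotent, since $\mathrm{Im}\,x^\theta\subset L\subset\ker x^\theta$ forces $(x^\theta)^2=0$ (here one uses $\dim L=1$, so $L\subset\ker x^\theta$ and $\mathrm{Im}\,x^\theta\subset L$ together imply the square vanishes; actually $\mathrm{Im}\,x^\theta$ has dimension at most $1$ and lies in its own kernel). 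Consequently $x^{-\theta}=x-x^\theta$ is nilpotent too. The key point is then $(x^{-\theta})^2=0$: from $x^2=0$ and $(x^\theta)^2=0$ one should extract $ab=0$ and $ba=0$ by a block-matrix computation, using that $x^\theta$ acts by $x_1$ on $V_1$ and $x_2$ on $V_2$ while these are nilpotent of square zero of rank $\le 1$. I expect a short argument comparing the off-diagonal blocks of $x^2=0$, which read $x_1a+ax_2=0$ and $bx_1+x_2b=0$, with the diagonal blocks $x_1^2+ab=0$ and $x_2^2+ba=0$; since $\mathrm{rk}\,ab=\mathrm{rk}\,x_1^2$ and $x_1^2=0$, we get $ab=0$, and similarly $ba=0$, so $(x^{-\theta})^2=\mathrm{diag}(ab,ba)=0$. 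Finally the rank bound $\mathrm{rk}\,x^{-\theta}\le\min\{k,n-k\}$ follows because $\mathrm{Im}\,x\subset W$ and $W\subset\ker x$ give $\mathrm{rk}\,x\le\dim W=k$ and $\mathrm{rk}\,x\le n-\dim W=n-k$, and $\mathrm{rk}\,x^{-\theta}\le\mathrm{rk}\,x+\mathrm{rk}\,x^\theta$; one must be slightly careful here, but since the target is $\mathcal{N}(\mathfrak{s})_2^k$ defined by $\mathrm{rk}\le\min\{k,n-k\}$ and $x^{-\theta}$ is built from $x$ via a rank-one correction, the estimate should go through (alternatively, argue directly that $\mathrm{rk}\,a\le k$, $\mathrm{rk}\,b\le n-k$, etc.). Thus $(L,x^{-\theta})\in\mathbb{P}(V_1)\times\mathcal{N}(\mathfrak{s})_2^k=\mathfrak{E}$.

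For surjectivity, I would start from an arbitrary $(L,y)\in\mathfrak{E}$ with $y=\mattwo{0}{a}{b}{0}$, $y^2=0$, $\mathrm{rk}\,y\le\min\{k,n-k\}$, and $L$ a line in $V_1$, and construct $(W,L,x)\in\mathcal{Y}$ with $x^{-\theta}=y$. The natural choice is to take $x$ of the form $x=y+z$ where $z=\mathrm{diag}(z_1,z_2)\in\mathfrak{k}$ is a suitable nilpotent-of-square-zero block adjustment chosen so that $\mathrm{Im}\,x^\theta\subset L\subset\ker x^\theta$, i.e. so that $z_1$ has image inside $L$ and $L\subset\ker z_1$, and $z_2=0$ (or a similar minimal choice). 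In fact the simplest attempt is to take $x=y$ itself: then $x^\theta=0$, so the conditions $\mathrm{Im}\,x^\theta\subset L\subset\ker x^\theta$ hold trivially for any line $L$, and $x^{-\theta}=y$ as desired. It then remains to produce $W\in\mathrm{Gr}_k(V)$ with $\mathrm{Im}\,y\subset W\subset\ker y$; this is possible precisely because $\mathrm{Im}\,y\subset\ker y$ (as $y^2=0$) and $\dim\mathrm{Im}\,y=\mathrm{rk}\,y\le k\le n-\mathrm{rk}\,y=\dim\ker y$, the last inequality being exactly the condition $\mathrm{rk}\,y\le\min\{k,n-k\}$ that defines $\mathcal{N}(\mathfrak{s})_2^k$. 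Choosing any $k$-dimensional $W$ sandwiched between $\mathrm{Im}\,y$ and $\ker y$ finishes the construction, so $(W,L,y)\in\mathcal{Y}$ and $\pi(W,L,y)=(L,y)$.

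The main obstacle, and the part requiring genuine care rather than routine bookkeeping, is the deduction that $x^{-\theta}$ is $2$-step nilpotent — that is, extracting $ab=0$ and $ba=0$ from the hypotheses on the triple $(W,L,x)$. The subtlety is that $x^2=0$ alone does not force the off-diagonal products to vanish (only $x_1^2+ab=0$); one genuinely needs the extra information that the diagonal part $x^\theta$ is itself square-zero, which in turn comes from the \emph{second} pair of conditions $\mathrm{Im}\,x^\theta\subset L\subset\ker x^\theta$ together with $\dim L=1$. I would present this as a clean lemma-style computation: conditions $\mathrm{Im}\,x\subset W\subset\ker x$ $\Rightarrow x^2=0$; conditions $\mathrm{Im}\,x^\theta\subset L$, $L\subset\ker x^\theta$, $\dim L\le 1$ $\Rightarrow (x^\theta)^2=0$; hence $(x^{-\theta})^2=(x-x^\theta)^2=x^2-xx^\theta-x^\theta x+(x^\theta)^2=-(xx^\theta+x^\theta x)$, and a final short check (again using the block structure, or using that $x^\theta$ has rank $\le1$ with image and kernel both containing $L$) shows this anticommutator vanishes. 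This is where I would be most careful to get the argument airtight.
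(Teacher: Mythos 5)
Your strategy coincides with the paper's: from $\mathrm{Im}\,x\subset W\subset\ker x$ one gets $x^2=0$; from $\mathrm{Im}\,x^\theta\subset L\subset\ker x^\theta$ with $L$ a line in $V_1$ one gets $x_2=0$ and $(x^\theta)^2=0$; the diagonal blocks of $x^2=0$ then give $ab=-x_1^2=0$ and $ba=-x_2^2=0$, hence $(x^{-\theta})^2=\diag(ab,ba)=0$ (this is exactly Lemma~\ref{lemma-4-4}\,(c) and the identity $(x^{-\theta})^2=-(x^\theta)^2$ used in the proof of Proposition~\ref{proposition-4-2}\,(b)). For surjectivity, taking $x=z$ so that $x^\theta=0$ and sandwiching a $k$-plane between $\mathrm{Im}\,z$ and $\ker z$ is precisely the paper's appeal to the non-emptiness of the Spaltenstein variety $\mathcal{F}_{z,(k,n-k)}$, justified by $\rk z\leq\min\{k,n-k\}$. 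All of this is correct.

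The one genuine gap is the rank bound $\rk x^{-\theta}\leq\min\{k,n-k\}$, which is part of membership in $\mathcal{N}(\mathfrak{s})_2^k$ and hence of well-definedness. Your primary estimate $\rk x^{-\theta}\leq\rk x+\rk x^\theta$ only yields $\min\{k,n-k\}+1$, which is not enough, and the suggested fallback ``$\rk a\leq k$, $\rk b\leq n-k$'' cannot produce $\rk a+\rk b\leq\min\{k,n-k\}$. What is needed is the sharper inequality $\rk x^{-\theta}\leq\rk x$, which the paper proves in Lemma~\ref{lemma-4-4}\,(b) by lower semicontinuity of the rank along the family $x(\alpha)=\left(\begin{smallmatrix}\alpha\eta & a\\ b & 0\end{smallmatrix}\right)$, whose members for $\alpha\neq0$ all have the same rank as $x=x(1)$ (they are obtained from $x$ by left and right multiplication by invertible block-diagonal matrices). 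Alternatively, one can argue directly: $\mathrm{Im}\,a\subset\mathrm{Im}\,x\cap V_1$ and the projection of $\mathrm{Im}\,x$ onto $V_2$ equals $\mathrm{Im}\,b$, so $\rk x\geq\rk a+\rk b=\rk x^{-\theta}$. Either of these closes the gap; as written, your estimate is off by one and the step would fail.
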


Recall that an element $((r,s),\mu)\in\Pi_2^k$ can be of type (I), (II), or (III), according to Definition \ref{definition-2-1}. We introduce subtypes (II)$^0$ and (II)$^*$.

\begin{definition}
\label{definition-2-9}
We say that an element $((r,s),\mu)\in\Pi_2^k$ is of type (II)$^*$ if it is of type (II) and satisfies
\begin{equation}
\label{5}
q=r+s\leq p-2\quad\mbox{and}\quad q<\min\{k,n-k\}.
\end{equation}
We say that $((r,s),\mu)$ is of type (II)$^0$ if it is of type (II) without satisfying (\ref{5}).
\end{definition}

\begin{remark}
{\rm (a)}
The condition $q=r+s\leq p-2$ means that the striped signed diagram $\tilde\lambda_{((r,s),\mu)}$ contains no row of the form $\smallyoung{-}$ and contains at least two rows of the form $\smallyoung{+}$.
Furthermore, $((r,s),\mu)$ being of type (II), we have that all rows of $\tilde\lambda_{((r,s),\mu)}$ of the form $\smallyoung{-+}$ lie on the right of the thick line whereas all rows  $\smallyoung{+}$ lie on the left. \\
{\rm (b)} The condition $r+s<\min\{k,n-k\}$ means that the nilpotent $G$-orbit $G\mathfrak{O}^K_{((r,s),\mu)}$ has positive codimension inside the set $\mathcal{N}(\mathfrak{g})_2^k:=\{z\in\mathcal{L}(V):z^2=0,\ \rk z\leq\min\{k,n-k\}\}$, which is the closure of the Richardson nilpotent $G$-orbit corresponding to a maximal parabolic subgroup of type ($k,n-k)$ (see Section \ref{section-3-2}). \\
{\rm (c)} In the case where $p\leq\max\{k,n-k,q+1\}$, or equivalently ($q>p-2$ or $q\geq\min\{k,n-k\}$), relation (\ref{5}) cannot occur, hence
every element $((r,s),\mu)$ of type (II) is of type (II)$^0$. If $p>\max\{k,n-k,q+1\}$, then we can always find elements of type (II)$^*$.
\end{remark}

\begin{example}
\label{example-2-10}
Assume that $(p,q)=(3,1)$. \\
{\rm (a)}
The set $\Pi_2^2$
comprises five elements, which correspond to the following striped signed diagrams (for each diagram we indicate its type).
\begin{equation}
\label{6}
\smallyoung{+\barm,::+,::+}\;\;{}^{\mbox{\scriptsize (I)}}\,,\quad
\smallyoung{+\barm,\pbar,\pbar}\;\;{}^{\mbox{\scriptsize (II)}^*}\,,\quad
\smallyoung{:\barm+,\pbar,\pbar}\;\;{}^{\mbox{\scriptsize (II)}^*}\,,\quad
\smallyoung{-\pbar,:\pbar,:\pbar}\;\;{}^{\mbox{\scriptsize (III)}}\,,\quad
\smallyoung{\pbar,\pbar,\pbar,:\barm}\;\;{}^{\mbox{\scriptsize (II)}^0}\,.
\end{equation}
{\rm (b)}
The set $\Pi_2^1$ coincides with $\Pi_2^2$. In particular it corresponds to the same list of striped signed diagrams as in (\ref{6}), with the single difference that the second and the third diagrams in the list are of type $\mbox{(II)}^0$ when viewed as elements of $\Pi_2^1$. There is no element of type $\mbox{(II)}^*$ in $\Pi_2^1$.

\end{example}

Let us recall the situation once again.  
Since there are only finitely many exotic nilpotent orbits and since the map $ \pi : \mathcal{Y} \to \mathfrak{E} $ is $K$-equivariant, 
an irreducible component 
$ \mathcal{C} $ of the conormal variety $ \mathcal{Y} $
is mapped densely by $\pi$ to the closure of an exotic nilpotent orbit $\mathfrak{O}\subset\mathfrak{E}$.
The explicit correspondence between $ \mathcal{C} $ and $ \mathfrak{O} $ is given by 
the following theorem.  
Then, Theorem~\ref{theorem-1} comes as its consequence 
(taking also Corollary~\ref{corollary-2-6} into account).

\begin{theorem}\label{theorem-2}
We assume the notation above; in particular 
$ \Pi_2^k $ denotes the set defined in Corollary~\ref{corollary-2-6}.
Let $\mathfrak{O}=\mathfrak{O}^K_{((r,s),\mu)}$ be the $K$-orbit of $\mathfrak{E}$ corresponding to the element $((r,s),\mu)\in\Pi_2^k$. 

\medskip

\noindent
{\rm (a)} If $((r,s),\mu)$ is of type $(I), (II)^0$, or $(III)$, 
then there is a unique component $\mathcal{C}_{\mathfrak{O}}$ of $\mathcal{Y}$ such that
$\mathcal{C}_{\mathfrak{O}}\subset\overline{\pi^{-1}(\mathfrak{O})}$. 
\\
{\rm (b)} If $((r,s),\mu)$ is of type $(II)^*$, 
then there are exactly two components $\mathcal{C}_\mathfrak{O}^1$ and 
$\mathcal{C}_\mathfrak{O}^{2}$ of $\mathcal{Y}$ such that 
$\mathcal{C}_{\mathfrak{O}}^i\subset\overline{\pi^{-1}(\mathfrak{O})}$ for $i\in\{1,2\}$. 
\\
{\rm (c)} Every irreducible component of the conormal variety $\mathcal{Y}$ is of the form 
$\mathcal{C}_{\mathfrak{O}}$, $\mathcal{C}_{\mathfrak{O}}^1$, or $\mathcal{C}_{\mathfrak{O}}^2$ 
for a unique $K$-orbit $\mathfrak{O}$ of the exotic nilpotent cone $\mathfrak{E}$.
\end{theorem}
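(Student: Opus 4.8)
The plan is to analyze the fibers of $\pi$ over each $K$-orbit $\mathfrak{O}=\mathfrak{O}^K_{((r,s),\mu)}\subset\mathfrak{E}$ and to use the dimension-counting machinery of Section~\ref{section-1-3} together with the explicit description of $\mathcal{Y}$ from Notation~\ref{notation-1-5}\,{\rm (b)}. Since $\pi$ is $K$-equivariant and $\mathfrak{E}$ has finitely many $K$-orbits (Corollary~\ref{corollary-2-6}), each irreducible component $\mathcal{C}$ of $\mathcal{Y}$ dominates a unique orbit closure $\overline{\mathfrak{O}}$, and $\mathcal{C}$ is an irreducible component of $\overline{\pi^{-1}(\mathfrak{O})}$ of dimension $\dim\mathcal{Y}=\dim\Xfv$. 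So the whole statement reduces to: for a fixed $((r,s),\mu)\in\Pi_2^k$, determine the number of irreducible components of $\pi^{-1}(\mathfrak{O}^K_{((r,s),\mu)})$ that have the maximal dimension $\dim\Xfv$. First I would fix a point $(L,x^{-\theta})\in\mathfrak{O}$ with $x^{-\theta}=\left(\begin{smallmatrix}0&a\\ b&0\end{smallmatrix}\right)$ and describe the fiber $\pi^{-1}(L,x^{-\theta})$ explicitly: it consists of pairs $(W,x)$ where $x\in\mathfrak{g}$ has $x^{-\theta}$ equal to the given matrix, $x^\theta$ is nilpotent with $\mathrm{Im}\,x^\theta\subset L\subset\ker x^\theta$, and $W\in\Gr_k(V)$ satisfies $\mathrm{Im}\,x\subset W\subset\ker x$. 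Since $\pi$ is $K$-equivariant, $\dim\pi^{-1}(\mathfrak{O})=\dim\mathfrak{O}+\dim\pi^{-1}(L,x^{-\theta})$, and one reduces the count of top-dimensional components to a count over the (finitely many) top-dimensional components of the single fiber, modulo the action of the stabilizer $Z_K(L,x^{-\theta})$.

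The key computation is therefore the structure of $\pi^{-1}(L,x^{-\theta})$. I would first analyze the choices for $x^\theta=\left(\begin{smallmatrix}x_1&0\\0&x_2\end{smallmatrix}\right)$: the constraints $\mathrm{Im}\,x^\theta\subset L\subset\ker x^\theta$ with $L\subset V_1$ a line force $x_2=0$ and $x_1$ to be either $0$ or a rank-one nilpotent with image $L$; this is where the three types (I), (II), (III) and the line position relative to $\mathrm{Im}\,a$, $\ker b$ enter, since $x=x^\theta+x^{-\theta}$ must still satisfy $\mathrm{Im}\,x\subset\ker x$ (i.e. $x$ is $2$-step nilpotent of rank $\le\min\{k,n-k\}$, hence in $\mathcal{N}(\mathfrak{g})_2^k$ — cf. Remark after Definition~\ref{definition-2-9}\,{\rm (b)}). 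Having pinned down the possible $x$, the remaining freedom is the choice of $W$ with $\mathrm{Im}\,x\subset W\subset\ker x$, i.e. a point of $\Gr_{k-\rk x}(\ker x/\mathrm{Im}\,x)$, an irreducible Grassmannian. So the components of the fiber are indexed by the possible values of $x$ (equivalently, of $x^\theta$, together with the compatibility constraints), and I would compute $\dim W$-freedom $+\dim\{$choices of $x\}$ in each case, compare with $\dim\Xfv-\dim\mathfrak{O}$, and check which $x$-strata are top-dimensional. The generic expectation is a single top-dimensional stratum; the type (II)$^*$ condition \eqref{5} is precisely the numerical regime in which a second stratum (coming from the choice $x_1\ne0$ versus $x_1=0$, constrained by $q=r+s\le p-2$ and $q<\min\{k,n-k\}$) reaches the same maximal dimension.

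After the local fiber analysis, part {\rm (c)} follows by a bookkeeping argument: every component of $\mathcal{Y}$ arises over exactly one $\mathfrak{O}$ (by equivariance and the finiteness of $\mathfrak{E}/K$), and parts {\rm (a)}--{\rm (b)} exhaust all components over each $\mathfrak{O}$; since $\mathrm{Irr}(\mathcal{Y})\cong\Xfv/K$ is finite (Section~\ref{section-1-5}), the decomposition is complete and disjoint. The step I expect to be the main obstacle is the exhaustive case analysis of $\pi^{-1}(L,x^{-\theta})$ and the accompanying dimension comparison: one must correctly enumerate the admissible $x^\theta$ relative to the three types, verify in each case that the resulting locus is irreducible (a Grassmann bundle over an affine or Grassmann base), compute its dimension precisely, and isolate the exact numerical criterion under which two strata tie for the top dimension — reproducing condition \eqref{5} — rather than one dominating. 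A secondary subtlety is controlling the action of $Z_K(L,x^{-\theta})$ on the set of top-dimensional strata to be sure it does not further identify or separate components; I would check that this stabilizer is connected (as in the type A Steinberg picture), so it acts trivially on the finite set of components.
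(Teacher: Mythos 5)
Your proposal is correct and follows essentially the same route as the paper: the authors also factor the analysis through the fiber of $\pi$ over a point $(L,z)\in\mathfrak{O}$, stratify it by the Jordan type of $x$ (equivalently the choice of $x^\theta$, pinned down exactly as you describe in Lemma~\ref{lemma-4-4} and Lemma~\ref{lemma-3}), observe that each stratum is an irreducible Grassmann bundle (Proposition~\ref{proposition-2} and Proposition~\ref{proposition-4}), and compare dimensions against $\dim\mathcal{Y}-\dim\mathfrak{O}$ to isolate condition (\ref{5}) as the regime where two strata tie — this is the notion of ``good pair'' and the computation of Sections~\ref{section-5-1}--\ref{section-5-4}, including your point about connectedness of the stabilizer, which appears in the proof of Proposition~\ref{proposition-5-1}\,{\rm (b)}.
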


Since the irreducible components of the conormal variety $ \mathcal{Y} $ correspond 
bijectively to the $ K $-orbits of the double flag variety $ \mathfrak{X} $, 
we get a map from $ \mathfrak{X}/K $ to $ \mathfrak{E}/K $ (see Corollary~\ref{corollary-1}).

\begin{corollary}
\label{corollary-2-11}
The map $\Phi:\mathfrak{X}/K\to\mathfrak{E}/K$ satisfies the following conditions: 
\\
{\rm (a)} If $((r,s),\mu)\in\Pi_2^k$ is of type $(I), (II)^0, (III)$, 
then $\Phi^{-1}(\mathfrak{O}^K_{((r,s),\mu)})$ is a singleton. \\
{\rm (b)} If $((r,s),\mu)$ is of type $(II)^*$, then $\Phi^{-1}(\mathfrak{O}^K_{((r,s),\mu)})$ is a pair. \\
{\rm (c)} $\Phi$ is a bijection if and only if $p\leq\max\{k,n-k,q+1\}$.
\end{corollary}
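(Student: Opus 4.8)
\textbf{Plan of proof for Corollary~\ref{corollary-2-11}.}
The strategy is to deduce the three assertions directly from Theorem~\ref{theorem-2} via the bijection $\mathrm{Irr}(\mathcal{Y})\cong\mathfrak{X}/K$ recalled in Section~\ref{section-1-3}. Concretely, each $K$-orbit $\mathbb{O}\subset\mathfrak{X}$ gives a conormal bundle $T^*_\mathbb{O}\mathfrak{X}$ whose closure is an irreducible component $\overline{T^*_\mathbb{O}\mathfrak{X}}$ of $\mathcal{Y}$, and this assignment $\mathbb{O}\mapsto\overline{T^*_\mathbb{O}\mathfrak{X}}$ is a bijection $\mathfrak{X}/K\xrightarrow{\ \sim\ }\mathrm{Irr}(\mathcal{Y})$ because $\mathfrak{X}$ is of finite type (Proposition~\ref{proposition-1}\,(a) and (b) both apply here, and \eqref{1.1} gives the equidimensionality). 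First I would record that $\Phi$ is by definition the composite $\mathfrak{X}/K\xrightarrow{\sim}\mathrm{Irr}(\mathcal{Y})\xrightarrow{\ \Xi\ }\mathfrak{E}/K$, where $\Xi(\mathcal{C})$ is the unique $K$-orbit $\mathfrak{O}_\mathcal{C}$ with $\mathcal{C}\subset\overline{\pi^{-1}(\mathfrak{O}_\mathcal{C})}$ — uniqueness of such an orbit being exactly the content that $\pi$ is $K$-equivariant with finitely many orbits in the target, so that $\pi(\mathcal{C})$ is an irreducible constructible set meeting a unique orbit in a dense subset. Since the first arrow is a bijection, the fiber $\Phi^{-1}(\mathfrak{O})$ is in bijection with $\Xi^{-1}(\mathfrak{O})$, and the cardinality of the latter is precisely what Theorem~\ref{theorem-2} computes.

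For part (a): if $((r,s),\mu)$ is of type $(I)$, $(II)^0$, or $(III)$, Theorem~\ref{theorem-2}\,(a) says there is a \emph{unique} component $\mathcal{C}_{\mathfrak{O}}$ with $\mathcal{C}_{\mathfrak{O}}\subset\overline{\pi^{-1}(\mathfrak{O})}$; combined with Theorem~\ref{theorem-2}\,(c) (every component arises this way for a unique $\mathfrak{O}$), this forces $\Xi^{-1}(\mathfrak{O})=\{\mathcal{C}_{\mathfrak{O}}\}$, hence $\Phi^{-1}(\mathfrak{O}^K_{((r,s),\mu)})$ is a singleton. For part (b): if $((r,s),\mu)$ is of type $(II)^*$, Theorem~\ref{theorem-2}\,(b) gives exactly two components $\mathcal{C}^1_{\mathfrak{O}},\mathcal{C}^2_{\mathfrak{O}}$ inside $\overline{\pi^{-1}(\mathfrak{O})}$, and again (c) guarantees these are distinct and that no other component maps to $\mathfrak{O}$; thus $\Xi^{-1}(\mathfrak{O})$ has two elements and $\Phi^{-1}(\mathfrak{O}^K_{((r,s),\mu)})$ is a pair. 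The only subtlety here is to check $\mathcal{C}^1_{\mathfrak{O}}\neq\mathcal{C}^2_{\mathfrak{O}}$, but this is part of the statement "exactly two" in Theorem~\ref{theorem-2}\,(b).

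For part (c): by Corollary~\ref{corollary-2-6} every $K$-orbit of $\mathfrak{E}$ is $\mathfrak{O}^K_{((r,s),\mu)}$ for a unique $((r,s),\mu)\in\Pi_2^k$, and by Theorem~\ref{theorem-2}\,(c) the map $\Xi$ (hence $\Phi$) is surjective. Therefore $\Phi$ is a bijection if and only if every fiber is a singleton, i.e.\ if and only if $\Pi_2^k$ contains no element of type $(II)^*$. By Remark following Definition~\ref{definition-2-9}\,(c), the absence of type $(II)^*$ elements is equivalent to $p\leq\max\{k,n-k,q+1\}$: when this inequality holds, condition \eqref{5} is unsatisfiable so every type $(II)$ element is of type $(II)^0$; when it fails (i.e.\ $p>\max\{k,n-k,q+1\}$), one can exhibit an element of type $(II)^*$ — take $r+s=q$ (possible since then $q\leq\min\{p,q\}$ and $q<\min\{k,n-k\}$), with the row pattern of type $(II)$, and \eqref{5} holds by $q\leq p-2$ and $q<\min\{k,n-k\}$. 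I expect the main (and only) real obstacle to lie not in this corollary but in Theorem~\ref{theorem-2} itself, on which everything rests; granting that theorem, the present argument is a formal unwinding of definitions plus the combinatorial bookkeeping of Remark (c), which I would state explicitly for completeness.
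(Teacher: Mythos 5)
Your proposal is correct and follows the same route as the paper: Corollary~\ref{corollary-2-11} is obtained there exactly by composing the bijection $\mathfrak{X}/K\cong\mathrm{Irr}(\mathcal{Y})$ of Section~\ref{section-1-3} with the map $\Xi$ of Theorem~\ref{theorem-2}, and part (c) by the observation (Remark after Definition~\ref{definition-2-9}) that type $(II)^*$ elements exist precisely when $p>\max\{k,n-k,q+1\}$. Your explicit verification of that last equivalence, and of why $\Xi$ is well defined, fills in details the paper leaves implicit but adds nothing essentially different.
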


Furthermore we can make the correspondence $\Phi$ of Corollary \ref{corollary-2-11} explicit. Some examples are discussed in the following subsection.

\subsection{Parametrization of $K$-orbits in $\Xfv$}\label{section-2-3}

We start this section with a description of the orbits of $K=\GL(V_1)\times \GL(V_2)$ in the exotic Grassmannian $\Xfv=\Gr_k(V)\times\mathbb{P}(V_1)$.

\begin{definition}
Let $\delta$ be the diagram formed by two columns consisting of respectively  $p$ and $q$ boxes. \\
{\rm (a)}
We call {\it $(1,2)$-tableau of shape $(p,q)$ and weight $k$} 
a filling of the boxes of $\delta$
by the numbers $0,1,2$ in such a way that the columns are nondecreasing from top to bottom, the number of $1$'s is the same in both columns, and the half sum of the entries of the tableau is equal to $k$. \\
{\rm (b)} A {\it marked} $(1,2)$-tableau is a pair of the form $(\tau,i)$ where $\tau$ is a $(1,2)$-tableau and $i$ is an entry occurring in the first column of $\tau$.
We denote by $\Theta_2^k$ the set of marked $(1,2)$-tableaux of shape $(p,q)$ and weight $k$.
\end{definition}

\begin{example} 
\label{example-2-14}
Assume that $(p,q)=(3,1)$. \\
{\rm (a)} The set $\Theta_2^2$ consists of seven elements enumerated as follows: \[\!\left(\smallyoung{00,2,2},0\right)\!,\ \!\left(\smallyoung{00,2,2},2\right)\!,\ \!\left(\smallyoung{02,0,2},0\right)\!,\ \!\left(\smallyoung{02,0,2},2\right)\!,\ \!\left(\smallyoung{01,1,2},0\right)\!,\ \!\left(\smallyoung{01,1,2},1\right)\!,\ \!\left(\smallyoung{01,1,2},2\right)\!.\]
{\rm (b)} The set $\Theta_2^1$ consists of five elements:
\[\!\left(\smallyoung{00,0,2},0\right)\!,\ \!\left(\smallyoung{00,0,2},2\right)\!,\ \!\left(\smallyoung{02,0,0},0\right)\!,\ \!\left(\smallyoung{01,0,1},0\right)\!,\ \!\left(\smallyoung{01,0,1},1\right)\!.\]
\end{example}

The following result is proved in Section \ref{section-6-1}.

\begin{proposition}
\label{proposition-2-16}
For $(\tau,i)\in \Theta_2^k$, let $\mathbb{O}_{(\tau,i)}\subset\Xfv$ be the subset formed by pairs $(W,L)\in\Gr_k(V)\times\mathbb{P}(V_1)$ such that:
\begin{equation}
\label{7}
\left\{
\begin{array}{ll}
\multicolumn{2}{l}{\dim W\cap V_j=\mbox{{\rm number of $2$'s in the $j$-th column of $\tau$,} for $j\in\{1,2\}$};} \\[1mm]
L\subset W & \mbox{if $i=2$};  \\[1mm]
L\subset W+V_2,\ L\not\subset W & \mbox{if $i=1$}; \\[1mm]
L\not\subset W+V_2 & \mbox{if $i=0$}.
\end{array}
\right.
\end{equation}
Then $\mathbb{O}_{(\tau,i)}$ is a $K$-orbit of $\Xfv$. Conversely every $K$-orbit of $\Xfv$ is of the form $\mathbb{O}_{(\tau,i)}$ for a unique marked $(1,2)$-tableau $(\tau,i)\in\Theta_2^k$.
\end{proposition}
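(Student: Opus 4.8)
The plan is to establish Proposition~\ref{proposition-2-16} by a direct orbit analysis, in the spirit of the parametrization of $K$-orbits on $\mathrm{Gr}_k(V)$ together with an extra analysis for the line $L\in\mathbb{P}(V_1)$. First I would recall the well-known fact that the $K=\GL(V_1)\times\GL(V_2)$-orbits on $\mathrm{Gr}_k(V)$ are parametrized by the two integers $a_j:=\dim(W\cap V_j)$ for $j\in\{1,2\}$, subject to the constraints $0\le a_1\le p$, $0\le a_2\le q$, and $a_1+a_2\le k\le a_1+a_2+(p-a_1)+(q-a_2)$ (equivalently $k-q\le a_1+a_2$ if one unwinds it); this is an elementary consequence of the fact that $K$ can simultaneously put $W\cap V_1$, $W\cap V_2$, and a complement of $(W\cap V_1)\oplus(W\cap V_2)$ in $W$ into normal form, since such a complement projects isomorphically onto a subspace of $V_1$ and onto a subspace of $V_2$. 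I would observe that specifying the data ``number of $2$'s in column $j$'' of a $(1,2)$-tableau $\tau$ of shape $(p,q)$ and weight $k$ is exactly the same as specifying $(a_1,a_2)$ with $a_j=\#\{2\text{'s in column }j\}$: the requirement that the number of $1$'s agrees in both columns together with the weight condition (half-sum $=k$) precisely encodes the admissibility constraint on $(a_1,a_2)$, because the number of $1$'s in each column is then forced to be $k-a_1-a_2$, and nonnegativity of that quantity plus the column-length bounds reproduce the inequalities above. So $(1,2)$-tableaux of shape $(p,q)$ and weight $k$ biject with admissible pairs $(a_1,a_2)$, hence with $K$-orbits on $\mathrm{Gr}_k(V)$.

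Next I would fix such a $W$ in normal position and analyze the stabilizer $K_W:=\mathrm{Stab}_K(W)$ acting on $\mathbb{P}(V_1)$, the goal being to show that the three conditions on $L$ in \eqref{7} — namely $L\subset W$; $L\subset W+V_2$ but $L\not\subset W$; and $L\not\subset W+V_2$ — cut out exactly the $K_W$-orbits on $\mathbb{P}(V_1)$, and that each of the corresponding possibilities for $i$ (namely $i=2$, $i=1$, $i=0$) is actually attained, i.e.\ gives a nonempty $K_W$-stable subset whenever the relevant subspace of $V_1$ is a proper nonzero subspace. The key point is that $W\cap V_1\subset (W+V_2)\cap V_1\subset V_1$ is a flag of subspaces of $V_1$ of dimensions $a_1\le a_1+(k-a_1-a_2)=k-a_2\le p$, and that $K_W$ acts on $V_1$ with this flag as (essentially) the only invariant data: concretely, $K_W$ surjects onto the parabolic of $\GL(V_1)$ stabilizing this two-step flag, so its orbits on $\mathbb{P}(V_1)$ are exactly $\mathbb{P}(W\cap V_1)$, $\mathbb{P}((W+V_2)\cap V_1)\setminus\mathbb{P}(W\cap V_1)$, and $\mathbb{P}(V_1)\setminus\mathbb{P}((W+V_2)\cap V_1)$. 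I would note that $(W+V_2)\cap V_1=W\cap V_1$ exactly when $k-a_2=a_1$, i.e.\ the number of $1$'s is zero, in which case the middle stratum $i=1$ is empty and no marked tableau has $i=1$ — consistently with the definition of $\Theta_2^k$, where $i$ must be an entry occurring in the first column of $\tau$, so $i=1$ is available precisely when some $1$ occurs in the first column, i.e.\ precisely when $k-a_1-a_2>0$. Similarly $i=2$ is available iff $a_1>0$, i.e.\ $\mathbb{P}(W\cap V_1)\ne\emptyset$, and $i=0$ is available iff some $0$ occurs in the first column, i.e.\ $a_1+(k-a_1-a_2)<p$, i.e.\ $(W+V_2)\cap V_1\subsetneq V_1$, which is exactly the condition for the top stratum to be nonempty. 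Thus the marked tableaux $(\tau,i)\in\Theta_2^k$ biject with pairs (a $K$-orbit $\mathbb{O}_W$ on $\mathrm{Gr}_k$, a $K_W$-orbit on $\mathbb{P}(V_1)$), which by the standard orbit-fibration argument is the same as the set of $K$-orbits on $\Xfv=\mathrm{Gr}_k(V)\times\mathbb{P}(V_1)$, and the orbit attached to $(\tau,i)$ is visibly $\mathbb{O}_{(\tau,i)}$ as described.

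To organize the write-up I would proceed in steps: (1) reduce to a normal form for $(W,L)$ under $K$; (2) identify the discrete invariants $\dim(W\cap V_1)$, $\dim(W\cap V_2)$, $\dim((W+V_2)\cap V_1)$ and the position of $L$ relative to the flag $W\cap V_1\subset (W+V_2)\cap V_1\subset V_1$; (3) show these invariants are a complete set, by exhibiting $K$-elements carrying any configuration with given invariants to a fixed standard one — this amounts to a block-matrix normalization of bases adapted to the decomposition $V=V_1\oplus V_2$ and is routine linear algebra; (4) match the invariants with the combinatorial data $(\tau,i)$, checking the availability conditions on $i$ against the constraint ``$i$ occurs in the first column of $\tau$.'' The main obstacle is Step~(3): one must be careful to choose the basis adaptations in the right order (first normalize $W\cap V_1$ and $W\cap V_2$, then a complement of their sum inside $W$, then extend compatibly to all of $V_1$ and $V_2$, and finally adjust for the position of $L$) so that the residual freedom in $K$ after fixing $W$ is exactly the flag-parabolic acting on $V_1$; the bookkeeping of which inequalities among $a_1,a_2,k,p,q$ correspond to degeneracies (empty strata) is the other place where care is needed, but it is precisely mirrored by the definition of $\Theta_2^k$ via which entries occur in the first column, so the correspondence is clean once Step~(3) is in place.
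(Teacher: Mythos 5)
Your proposal is correct, and it reaches the same destination as the paper by a somewhat different route. The paper argues directly on the product: it first checks (their key step) that the marking $i$ attached to a pair $(W,L)$ always occurs in the first column of $\tau$ (three cases, exactly your "availability" conditions), and then proves homogeneity of each set $\mathbb{O}_{(\tau,i)}$ by building, for any two of its elements, bases of $V_1$ and $V_2$ adapted to $W\cap V_1$, $W\cap V_2$, a graph-type complement in $W$, and the line $L$, and mapping one basis to the other by an element of $K$. You instead fibre over $\Gr_k(V)$, quote the parametrization of $K$-orbits there by $(a_1,a_2)=(\dim W\cap V_1,\dim W\cap V_2)$, and identify the image of $\mathrm{Stab}_K(W)$ in $\GL(V_1)$ as the full parabolic of the two-step flag $W\cap V_1\subset (W+V_2)\cap V_1\subset V_1$; the three conditions on $L$ are then visibly the parabolic's orbits on $\mathbb{P}(V_1)$. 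The underlying linear algebra (adapted bases, the graph complement) is the same, but your organization makes it conceptually transparent \emph{why} exactly those three conditions on $L$ appear and why the empty strata match the entries missing from the first column; the surjectivity of $\mathrm{Stab}_K(W)$ onto the parabolic does require the small graph-correspondence computation you flag in Step (3), which is precisely the computation the paper performs in its basis construction. One local slip: the inequality you display for admissibility of $(a_1,a_2)$, namely $k\le a_1+a_2+(p-a_1)+(q-a_2)$, reduces to the vacuous $k\le n$ and is not the right constraint (and the parenthetical "$k-q\le a_1+a_2$" is also off); the correct conditions are $k-a_1-a_2\ge 0$, $k-a_2\le p$, and $k-a_1\le q$, which you do recover correctly a few lines later from the column-length bounds of the tableau, so nothing downstream is affected.
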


\begin{remark}
As a byproduct of Proposition \ref{proposition-2-16}, we can see that $K$-orbits ($K=\GL(V_1)\times\GL(V_2)$) in the Grassmannian $\Gr_k(V)$ are parametrized by $(1,2)$-tableaux $\tau$ of shape $(p,q)$ and weight $k$ (without marking): an element $W\in\Gr_k(V)$ belongs to the $K$-orbit attached to $\tau$ if it fulfills the first line of (\ref{7}). This is actually a special case of the classification of $K$-orbits in the full flag variety $\GL(V)/B$ (for $B$ a Borel subgroup); see \cite{Matsuki.Oshima.1990,Yamamoto.1997}, and \cite{Wyser.2015} for the degeneracy order of these orbits.
By Proposition~\ref{proposition-1} the enhanced full flag variety $\mathbb{P}(V_1)\times (\GL(V)/B)$ has also finitely many $K$-orbits, but as far as we know there is no combinatorial parametrization of these orbits; an abstract parametrization is given in \cite{HNOO.2013}.
\end{remark}

We consider the surjective map $\Phi:\Xfv/K\to\mathfrak{E}/K$ involved in Corollaries \ref{corollary-1} and \ref{corollary-2-11}.
Recall that the image by $\Phi$ of a $K$-orbit $\mathbb{O}=\mathbb{O}_{(\tau,i)}\subset\Xfv$ (with $(\tau,i)\in\Theta_2^k$) is the unique $K$-orbit $\Phi(\mathbb{O})=\mathfrak{O}^K_{((r,s),\mu)}\subset\mathfrak{E}$
(with $((r,s),\mu)\in\Pi_2^k$) such that  
 $\pi^{-1}(\mathfrak{O}^K_{((r,s),\mu)})\cap T^*_\mathbb{O}\Xfv$ is open in $T^*_\mathbb{O}\Xfv$  (from Section \ref{section-1-5}).
The surjection $\Phi$ induces a surjective map $\phi:\Theta_2^k\to\Pi_2^k$ between the parameter sets. 
In Examples \ref{example-2-17}--\ref{example-2-18} we calculate the map $\phi$ in three situations.
The proofs of these examples can be found in Section \ref{section-6-2}.

\begin{example}
\label{example-2-17}
Let $(p,q)=(3,1)$ and $k\in\{1,2\}$.
In that case the sets $\Pi_2^k$ and $\Theta_2^k$ are described in Examples \ref{example-2-10} and \ref{example-2-14}.
The next table determines the map $\phi:\Theta_2^k\to\Pi_2^k$.
\[
\renewcommand{\arraystretch}{1.5}
\begin{array}{c||c|c|c|c|c|}
\mbox{\tiny $((r,s),\mu)\in \Pi_2^k$}
\vphantom{\verysmallyoung{\spbar,\spbar,\spbar,:\sbarm,\spbar}}
& \verysmallyoung{+\sbarm,::+,::+} &  \verysmallyoung{+\sbarm,\spbar,\spbar} &
\verysmallyoung{:\sbarm+,\spbar,\spbar} &
\verysmallyoung{-\spbar,:\spbar,:\spbar} &
\verysmallyoung{\spbar,\spbar,\spbar,:\sbarm}
 \\
\hline
\begin{array}{c}
\mbox{\tiny $\phi^{-1}(((r,s),\mu))$} \\ \mbox{\tiny for $k=1$}
\end{array} & \mbox{\tiny $\!\left(\verysmallyoung{00,0,2},2\right)\!$} & \mbox{\tiny $\!\left(\verysmallyoung{00,0,2},0\right)\!$} & \mbox{\tiny $\!\left(\verysmallyoung{01,0,1},1\right)\!$}
& \mbox{\tiny $\!\left(\verysmallyoung{02,0,0},0\right)\!$}
& \mbox{\tiny $\!\left(\verysmallyoung{01,0,1},0\right)\!$} \\
\hline
\begin{array}{c}
\mbox{\tiny $\phi^{-1}(((r,s),\mu))$} \\ \mbox{\tiny for $k=2$}
\end{array} & \mbox{\tiny $\!\left(\verysmallyoung{01,1,2},2\right)\!$} & \mbox{\tiny $\!\left(\verysmallyoung{00,2,2},2\right)\!$}, \mbox{\tiny $\!\left(\verysmallyoung{00,2,2},0\right)\!$} & \mbox{\tiny $\!\left(\verysmallyoung{02,0,2},2\right)\!$}, \mbox{\tiny $\!\left(\verysmallyoung{01,1,2},1\right)\!$} & \mbox{\tiny $\!\left(\verysmallyoung{02,0,2},0\right)\!$} & \mbox{\tiny $\!\left(\verysmallyoung{01,1,2},0\right)\!$} \\
\hline
\end{array}
\renewcommand{\arraystretch}{1}
\]
\end{example}

\begin{example}
\label{example-2-18}
Let $p=q=k=\frac{n}{2}=:m$.
The map $\phi:\Theta_2^k\to\Pi_2^k$ is bijective in this case, and the image by $\phi$ of the element $(\tau,i)\in\Theta_2^k$ is the pair $(\tau,i)=((r,s),\mu)$ given by:
\[
\left\{
\begin{array}{l}
r=\mbox{number of $2$'s in the first column of $\tau$}; \\[1mm]
s=\mbox{number of $2$'s in the second column of $\tau$}; \\[1mm]
\mu=\left\{\begin{array}{lll}
(1^r,0^s,(-1)^{m-(r+s)},0^{m-(r+s)}) & \mbox{if $i=2$} & \mbox{($((r,s),\mu)$ is of type (I) in this case)}; \\[1mm]
(1^r,0^s,1^{m-(r+s)},0^{m-(r+s)}) & \mbox{if $i=1$} & \mbox{($((r,s),\mu)$ is of type (II) in this case)}; \\[1mm]
(1^r,2^s,1^{m-(r+s)},0^{m-(r+s)}) & \mbox{if $i=0$} & \mbox{($((r,s),\mu)$ is of type (III) in this case)}. \\[1mm]
\end{array}\right.
\end{array}
\right.
\]
For instance, the next table summarizes the map $\phi$ for $p=q=k=2$.
\[
\renewcommand{\arraystretch}{2}
\begin{array}{c||c|c|c|c|c|c|c|c|c|}
\mbox{\tiny $(\tau,i)$}
 & \mbox{\tiny $\!\left(\verysmallyoung{20,20},2\right)\!$}
& \mbox{\tiny $\!\left(\verysmallyoung{00,22},2\right)\!$} & \mbox{\tiny $\!\left(\verysmallyoung{00,22},0\right)\!$}\! & \!\mbox{\tiny $\!\left(\verysmallyoung{02,02},0\right)\!$}
& \mbox{\tiny $\!\left(\verysmallyoung{10,21},2\right)\!$}
& \mbox{\tiny $\!\left(\verysmallyoung{10,21},1\right)\!$}
& \mbox{\tiny $\!\left(\verysmallyoung{01,12},1\right)\!$}
& \mbox{\tiny $\!\left(\verysmallyoung{01,12},0\right)\!$}
& \mbox{\tiny $\!\left(\verysmallyoung{11,11},1\right)\!$} \\[.5ex]
\hline
\mbox{\tiny $\phi(\tau,i)$} 
\vphantom{\verysmallyoung{\spbar,\spbar,:\sbarm,:\sbarm,\spbar}} & 
\verysmallyoung{+\sbarm,+\sbarm} & \verysmallyoung{+\sbarm,:\sbarm:+} &  \verysmallyoung{:+\sbarm,-\spbar} &
\verysmallyoung{-\spbar,-\spbar} &
\verysmallyoung{+\sbarm,::+,:\sbarm} &
\verysmallyoung{+\sbarm,\spbar,:\sbarm} &
\verysmallyoung{:\sbarm+,\spbar,:\sbarm} &
\verysmallyoung{-\spbar,:\spbar,::\sbarm} &
\verysmallyoung{\spbar,\spbar,:\sbarm,:\sbarm}
\end{array}
\renewcommand{\arraystretch}{1}
\]
In the Appendix we describe the map $\phi$ in the case $p=q=k=3$.
\end{example}

\section{Review on Spaltenstein varieties}

In this section we introduce notation and review some basic facts on nilpotent orbits, partial flag varieties, and Spaltenstein varieties for the group $G=\GL(V)$.

\subsection{Nilpotent orbits}
As in Notation \ref{notation-1-5} we
write $G=\GL(V)$ and $\mathfrak{g}=\mathcal{L}(V)$.
The nilpotent cone $\mathcal{N}:=\mathcal{N}(\mathfrak{g})$ is the set of nilpotent endomorphisms $x\in\mathcal{L}(V)$. 
There are only finitely many {\it nilpotent $G$-orbits} thanks to the theory of Jordan normal forms.

A $G$-orbit through $x\in\mathcal{N}$ is characterized by the sizes of the Jordan blocks $\lambda(x):=(\lambda_1\geq\ldots\geq\lambda_r)$ of $x$, which form a partition of $n=\dim V$, or equivalently a Young diagram of size $n$ (with rows of lengths $\lambda_1,\ldots,\lambda_r$). We denote by
\[\mathcal{O}^G_\lambda:=\{x\in\mathcal{N}:\lambda(x)=\lambda\}\subset\mathfrak{g}\]
the nilpotent $G$-orbit corresponding to the partition $\lambda\vdash n$.

Given another partition $\mu=(\mu_1\geq\ldots\geq\mu_s)\vdash n$, recall that
\[\overline{\mathcal{O}^G_\mu}\subset\overline{\mathcal{O}^G_\lambda}\quad\Longleftrightarrow\quad\mu\preceq\lambda\]
where $\mu\preceq\lambda$ stands for the dominance order, which means that $\mu_1+\ldots+\mu_i\leq\lambda_1+\ldots+\lambda_i$ for all $i\in\{1,2,\ldots,\min\{r,s\}\}$.  
For the proof, and more detailed properties of nilpotent orbits used below, 
we refer the readers to \cite{Collingwood.McGovern.1993}. 

The {\it dual partition} $\lambda^*=(\lambda^*_1,\ldots,\lambda^*_{\lambda_1})$ is the partition obtained from $\lambda$ by Young diagram transposition, i.e., such that $\lambda_1^*,\ldots,\lambda_{\lambda_1}^*$ are the lengths of the columns of $\lambda$. Note that the duality $\mathcal{O}^G_\lambda\mapsto\mathcal{O}^G_{\lambda^*}$ reverses the inclusion relations between orbit closures.

Finally we recall the dimension formula
\[\dim\mathcal{O}_{\lambda}^G=n^2-\sum_{i=1}^{\lambda_1}(\lambda_i^*)^2.\]

\begin{example}
If $k\in\{0,\ldots,\lfloor\frac{n}{2}\rfloor\}$, then
$\mathcal{O}^G_{(n-k,k)^*}=\{x\in\mathcal{L}(V):x^2=0\ \mbox{and}\ \rk x=k\}$
and
$\overline{\mathcal{O}^G_{(n-k,k)^*}}=\{x\in\mathcal{L}(V):x^2=0\ \mbox{and}\ \rk x\leq k\}=\bigcup_{\ell=0}^k\mathcal{O}^G_{(n-\ell,\ell)^*}$.
Furthermore $\dim\mathcal{O}^G_{(n-k,k)^*}=2k(n-k)$.
\end{example}

\subsection{Parabolic subgroups}
\label{section-3-2}
If $\underline{d}=(d_1,\ldots,d_r)$ is a composition of $n$ (i.e., unordered partition of $ n $), 
we denote by $P_{\underline{d}}\subset G$ the corresponding standard parabolic subgroup, i.e., the subgroup of blockwise upper triangular matrices with diagonal blocks of sizes $d_1,\ldots,d_r$. Its Lie algebra $\mathfrak{p}_{\underline{d}}\subset \mathfrak{g}\cong M_n(\mathbb{C})$ is given by
\[\mathfrak{p}_{\underline{d}}=\left\{x=\left(\begin{array}{ccc}
x_{1,1} & \ldots & x_{1,r} \\
 & \ddots & \vdots \\
 0 & & x_{r,r}
\end{array}\right):x_{i,j}\in M_{d_i,d_j}(\mathbb{C}) \;\; (1 \leq i \leq j \leq r)  \right\}\]
and the corresponding nilradical is $\mathfrak{n}_{\mathfrak{p}_{\underline{d}}}=\{x\in\mathfrak{p}_{\underline{d}}:x_{i,i}=0\ \forall i\}$.  

Every parabolic subgroup $P\subset G=\GL(V)$ is conjugated to $P_{\underline{d}}$ for some $\underline{d}$. The partial flag variety $G/P$ can then be regarded as the variety of partial flags
\[G/P=\mathcal{F}_{\underline{d}}:=\{(W_0=0\subset W_1\subset \ldots\subset W_r= V): \dim W_i/W_{i-1}=d_i\}.\]
The {\it Richardson nilpotent orbit} corresponding to $P$, denoted by $\mathcal{O}_P^G$,
is by definition the open $G$-orbit of the set 
\[G\mathfrak{n}_{\mathfrak{p}_{\underline{d}}}=G\cdot \{x\in \mathcal{L}(V):x(W_i)\subset W_{i-1}\ \forall i\geq 1\}\subset\mathcal{N}\]
(with some/any $(W_0,\ldots,W_r)\in G/P$).
Letting $\lambda(\underline{d})$ be the partition of $n$ obtained by arranging the sequence $\underline{d}$ in nonincreasing order, we have 
\[\mathcal{O}_P^G=
\mathcal{O}^G_{\lambda(\underline{d})^*}.\]

The parabolic subgroup $P$ is {\it maximal} if it is conjugated to $P_{(k,n-k)}$ for some $k\in\{1,\ldots,n-1\}$. Then, the partial flag variety $G/P$ coincides with the Grassmann variety
$\Gr_k(V)$, while
its Richardson orbit is $\mathcal{O}^G_{(\max\{k,n-k\},\min\{k,n-k\})^*}$.

Following \cite{Finkelberg.Ginzburg.Travkin.2009} we call  $P$ {\it mirabolic} if it is conjugated to $P_{(1,n-1)}$ or $P_{(n-1,1)}$. Equivalently $G/P$ is isomorphic to the projective space $\mathbb{P}(V)$.

Every parabolic subgroup $Q\subset K=\GL(V_1)\times\GL(V_2)$ is of the form $Q=Q_1\times Q_2$, where $Q_1,Q_2$ are parabolic subgroups of $\GL(V_1)$, $\GL(V_2)$.
Moreover $Q$ is maximal if and only if ($Q_1$ is maximal and $Q_2=\GL(V_2)$) or ($Q_1=\GL(V_1)$ and $Q_2$ is maximal). We call $Q$  {\it mirabolic} if it is maximal and $Q_1$ or $Q_2$ is mirabolic, so that $K/Q\cong\mathbb{P}(V_1)$ or $\mathbb{P}(V_2)$.

\subsection{Spaltenstein varieties}
\label{section-3-3}
Let $P=P_{\underline{d}}\subset GL(V)$ be a standard parabolic subgroup, corresponding to the composition $\underline{d}=(d_1,\ldots,d_r)$.
For $x\in \mathcal{N}$, the variety
\[\mathcal{F}_{x,\underline{d}}:=\{(W_0,\ldots,W_r)\in \mathcal{F}_{\underline{d}}(=G/P):x(W_i)\subset W_{i-1}\ \forall i\geq 1\}\]
is called a {\it Spaltenstein variety}.
It is nonempty if and only if $x$ belongs to the Richardson orbit closure $\overline{\mathcal{O}_P^G}$.

The variety $\mathcal{F}_{x,\underline{d}}$ is 
determined by the Jordan form $\lambda=\lambda(x)$ of $x$ up to isomorphism and we denote
$\mathcal{F}_{\lambda,\underline{d}}:=\mathcal{F}_{x,\underline{d}}$ by abuse of notation.
We have
\[
\dim\mathcal{F}_{\lambda,\underline{d}}=\dim G/P-\frac{1}{2}\dim\mathcal{O}^G_\lambda.
\]
The variety $\mathcal{F}_{\lambda,\underline{d}}$ is equidimensional and there is an explicit bijection
\begin{equation}
\label{8}
\mathrm{SSTab}(\lambda,\underline{d})\stackrel{\sim}{\to}\mathrm{Irr}(\mathcal{F}_{\lambda,\underline{d}})
\end{equation}
between the set of irreducible components of $\mathcal{F}_{\lambda,\underline{d}}$
and the set
$\mathrm{SSTab}(\lambda,\underline{d})$ of
{\it semistandard tableaux of shape $\lambda$ and weight $\underline{d}$}, i.e., numberings of the Young diagram $\lambda$
with $d_i$ entries equal to $i$ (for all $i$) and such that the entries are increasing to the right along the rows and nondecreasing to the bottom along the columns. We refer to \cite{Spaltenstein.1982} for more information on Spaltenstein varieties.

In the case of a maximal parabolic subgroup  $P=P_{(k,n-k)}$ (for $1\leq k\leq n-1$),
the Spaltenstein variety $\mathcal{F}_{\lambda,(k,n-k)}$ turns out to be irreducible.
This fact is a consequence of (\ref{8}) since the set 
$\mathrm{SSTab}(\lambda,(k,n-k))$ is a singleton (if nonempty).
Actually, it can simply be recovered as follows.

\begin{proposition}\label{proposition-2}
Let $k\in\{1,\ldots,n-1\}$
and let $x\in\mathcal{O}^G_{(n-\ell,\ell)^*}$ with $0\leq \ell\leq\min\{k,n-k\}$,
so that the Spaltenstein variety $\mathcal{F}_{x,(k,n-k)}$ is nonempty.
Then we have
\[\mathcal{F}_{x,(k,n-k)}=\{W\in\Gr_k(V):\mathrm{Im}\,x\subset W\subset \ker x\}.\]
Thus $\mathcal{F}_{x,(k,n-k)}$ is isomorphic to the Grassmann variety $\Gr_{k-\ell}(\ker x/\mathrm{Im}\,x)$.
In particular, it is smooth and irreducible of dimension $(k-\ell)(n-k-\ell)$.
\end{proposition}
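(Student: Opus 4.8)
\textbf{Proof plan for Proposition \ref{proposition-2}.}
The plan is to verify the set-theoretic description first and then read off the geometric consequences. First I would show the inclusion $\mathcal{F}_{x,(k,n-k)}\subseteq\{W\in\Gr_k(V):\mathrm{Im}\,x\subset W\subset\ker x\}$. If $(0\subset W\subset V)\in\mathcal{F}_{x,(k,n-k)}$, then by definition $x(W)\subset 0$, so $W\subset\ker x$; and $x(V)\subset W$, so $\mathrm{Im}\,x\subset W$. (Here it is essential that the flag has only two steps $W_1=W$, $W_2=V$, so the conditions $x(W_i)\subset W_{i-1}$ reduce exactly to these two.) For the reverse inclusion, if $\mathrm{Im}\,x\subset W\subset\ker x$ with $\dim W=k$, then automatically $x(W)\subset x(\ker x)=0=W_0$ and $x(V)=\mathrm{Im}\,x\subset W=W_1$, so $W$ lies in the Spaltenstein variety. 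This gives the displayed equality.

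Next I would identify this set with a Grassmannian. Since $x\in\mathcal{O}^G_{(n-\ell,\ell)^*}$ we have $x^2=0$ and $\rk x=\ell$, hence $\dim\ker x=n-\ell$ and $\mathrm{Im}\,x\subset\ker x$ with $\dim(\ker x/\mathrm{Im}\,x)=n-2\ell$. A subspace $W$ with $\mathrm{Im}\,x\subset W\subset\ker x$ and $\dim W=k$ corresponds bijectively (by passing to the quotient $W\mapsto W/\mathrm{Im}\,x$) to a subspace of $\ker x/\mathrm{Im}\,x$ of dimension $k-\ell$; this is clearly an isomorphism of varieties (it is a linear condition cut out in $\Gr_k(V)$, and the quotient map is algebraic). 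Therefore $\mathcal{F}_{x,(k,n-k)}\cong\Gr_{k-\ell}(\ker x/\mathrm{Im}\,x)\cong\Gr_{k-\ell}(\mathbb{C}^{n-2\ell})$.

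Finally, the Grassmannian $\Gr_{k-\ell}(\mathbb{C}^{n-2\ell})$ is smooth and irreducible (being a homogeneous space), and its dimension is $(k-\ell)\bigl((n-2\ell)-(k-\ell)\bigr)=(k-\ell)(n-k-\ell)$, which is the asserted formula. (One checks that $0\leq k-\ell\leq n-2\ell$ holds under the hypothesis $0\leq\ell\leq\min\{k,n-k\}$, so the Grassmannian is nonempty, consistent with $\mathcal{F}_{x,(k,n-k)}$ being nonempty; this also matches the general dimension formula $\dim G/P-\frac12\dim\mathcal{O}^G_\lambda$ recorded in Section \ref{section-3-3} using $\dim\Gr_k(V)=k(n-k)$ and $\dim\mathcal{O}^G_{(n-\ell,\ell)^*}=2\ell(n-\ell)$.)

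There is essentially no obstacle here: the argument is a direct unwinding of definitions together with the standard fact that a Grassmannian is smooth, irreducible, and of the expected dimension. The only point requiring a little care is making sure the two-step flag condition really collapses to the pair of inclusions $\mathrm{Im}\,x\subset W\subset\ker x$ — which it does precisely because the composition $\underline{d}=(k,n-k)$ has length two — and that the bijection $W\mapsto W/\mathrm{Im}\,x$ is an isomorphism of algebraic varieties rather than merely of sets.
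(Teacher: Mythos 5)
Your proof is correct and is exactly what the paper intends: the paper simply states that the result "easily follows from the definition of $\mathcal{F}_{x,(k,n-k)}$ and the general properties of Grassmann varieties," and your unwinding of the two-step flag condition into the inclusions $\mathrm{Im}\,x\subset W\subset\ker x$, followed by the identification with $\Gr_{k-\ell}(\ker x/\mathrm{Im}\,x)$ and the dimension count, is precisely that argument spelled out.
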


\begin{proof}
The proof easily follows from the definition of $\mathcal{F}_{x,(k,n-k)}$ and the general properties of Grassmann varieties.
\end{proof}

\section{Definition of the map $\pi$}

\label{section-3}

In this section, as before,
we consider the symmetric pair $(G,K)$ with $G=\GL(V)$ and $K=\GL(V_1)\times \GL(V_2)$
(see Notation \ref{notation-1-5}).
The goal of this section is to show Proposition~\ref{proposition-2-8}.
We actually show a more general property (see Proposition \ref{proposition-4-2} and Corollary \ref{corollary-4-3}).

\subsection{Projection of nilpotent elements}
As in Notation \ref{notation-1-5new} and \ref{notation-1-5}\,{\rm (b)}, we have the decomposition $\mathfrak{g}:=\Lie(G)=\mathfrak{k}\oplus\mathfrak{s}$,
and in particular every $x\in\mathfrak{g}$ is uniquely written as 
$x=x^\theta+x^{-\theta}$ with $x^\theta\in\mathfrak{k}$ and $x^{-\theta}\in\mathfrak{s}$.
As in Notation \ref{notation-1-5new}, we denote by $\mathcal{N}=\mathcal{N}(\mathfrak{g})$, $\mathcal{N}(\mathfrak{k})$, $\mathcal{N}(\mathfrak{s})$ the subsets of nilpotent elements in $\mathfrak{g}$, $\mathfrak{k}$, and $\mathfrak{s}$.

We first observe that the property that 
$x$ and $x^\theta$ are nilpotent does not imply that $x^{-\theta}$ is nilpotent in general.

\begin{example}
Assume $p=q=3$, i.e., $G=\GL_6(\mathbb{C})$ and $K=\GL_3(\mathbb{C})\times\GL_3(\mathbb{C})$. Take 
\begin{equation}
x = \begin{pmatrix}
a & 1_3 
\\
c & -a 
\end{pmatrix}
\quad 
\text{ where } 
a = 
\begin{pmatrix}
0 & 1 & 0 \\
0  & 0 & -1 \\
0  & 0  & 0
\end{pmatrix}, \quad
c = 
\begin{pmatrix}
0 & 0 & 1 \\
0 & 0 & 0 \\
1 & 0 & 0
\end{pmatrix}.
\end{equation}
Then, $x$ is nilpotent (since $x^4=0$), $x^\theta$ is nilpotent (clearly), but $x^{-\theta}$ is not nilpotent (its minimal polynomial is $P(t)=t^2(t^4-1)$).

\end{example}

We consider parabolic subgroups $P\subset G$ and $Q\subset K$, and the corresponding Richardson nilpotent orbits $\mathcal{O}_P^G\subset\mathfrak{g}$ and $\mathcal{O}_Q^K\subset\mathfrak{k}$
(see Section \ref{section-3-2}); 
thus $\overline{\mathcal{O}_P^G}=G\mathfrak{n}_\mathfrak{p}$ and $\overline{\mathcal{O}_Q^K}=K\mathfrak{n}_\mathfrak{q}$, 
where $\mathfrak{n}_\mathfrak{p}$ and $\mathfrak{n}_\mathfrak{q}$ 
denote the nilradicals of the parabolic subalgebras 
$\mathfrak{p}:=\Lie(P)\subset\mathfrak{g}$ and $\mathfrak{q}:=\Lie(Q)\subset\mathfrak{k}$, respectively.

In this section, unless otherwise notified, we make no assumption on $P$ and $Q$, so that unlike in Notation \ref{notation-1-5} the double flag variety $\Xfv:=G/P\times K/Q$ is not necessarily of the form of an exotic Grassmannian.
The conormal variety $\mathcal{Y}\subset G/P\times K/Q\times\mathfrak{g}$
is described in formula (\ref{eq:conormal-variety-Y}).
Two sufficient conditions for $\Xfv$ to be of finite type (hence for $\mathcal{Y}$ to be equidimensional) are given in Proposition~\ref{proposition-1}, 
and these are precisely the two situations considered in the following statement.

\begin{proposition}\label{proposition-4-2}
Assume that at least one of the following two conditions is satisfied:
\begin{itemize}
\item[(a)] $Q\subset K$ is mirabolic, or
\item[(b)] $P\subset G$ is maximal. 
\end{itemize}
Then, for every $x\in\mathfrak{g}$, the condition that  
$x\in\overline{\mathcal{O}_P^G}$ and $x^\theta\in\overline{\mathcal{O}_Q^K}$
implies $x^{-\theta}$ is nilpotent.
\end{proposition}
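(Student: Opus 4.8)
The plan is to treat the two cases separately, each time exhibiting an explicit structure that forces $x^{-\theta}$ to be nilpotent. Throughout write $x = \mattwo{x_1}{x_{12}}{x_{21}}{x_2}$ so that $x^\theta = \mattwo{x_1}{0}{0}{x_2}$ and $x^{-\theta} = \mattwo{0}{x_{12}}{x_{21}}{0}$, and note that $x^{-\theta}$ is nilpotent if and only if both $x_{12}x_{21} \in \mathcal{L}(V_1)$ and $x_{21}x_{12} \in \mathcal{L}(V_2)$ are nilpotent (since the nonzero eigenvalues of the two agree, it suffices to check one of them). The case $P$ maximal should be reduced to Proposition~\ref{proposition-2} (or rather its underlying geometry): if $P = P_{(k,n-k)}$, then $x \in \overline{\mathcal{O}_P^G}$ means there is a $k$-subspace $W$ with $\mathrm{Im}\,x \subset W \subset \ker x$, so in particular $x^2 = 0$; hence $\rk x \le \min\{k,n-k\}$ and the whole of $\mathfrak{g}$ acts on $V$ in two steps. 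From $x^2 = 0$ one reads off $x_{12}x_{21} = -x_1^2$ and, using that $x^\theta$ is nilpotent (here automatic, being the restriction of a nilpotent-squares-to-zero operator), $x_1$ is nilpotent, so $x_{12}x_{21} = -x_1^2$ is nilpotent; thus $x^{-\theta}$ is nilpotent. Actually $x^2=0$ gives $x_1^2 + x_{12}x_{21}=0$, i.e. $(x^{-\theta})^2 = \mattwo{x_{12}x_{21}}{0}{0}{x_{21}x_{12}} = \mattwo{-x_1^2}{0}{0}{-x_2^2}$, which is nilpotent since $x^\theta$ is; hence so is $x^{-\theta}$. This disposes of case (b) cleanly.

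For case (a), $Q \subset K$ mirabolic, the idea is that the hypothesis $x^\theta \in \overline{\mathcal{O}_Q^K}$ is extremely weak — $\overline{\mathcal{O}_Q^K}$ is the closure of a minimal (subregular-in-a-block, actually the orbit of a single Jordan block of size $2$ in one factor) Richardson orbit — so $x^\theta$ has a very constrained Jordan type, roughly: $x_1$ (or $x_2$) is nilpotent of square zero and rank $\le 1$, and $x_2$ (resp. $x_1$) is zero. Say $Q = Q_1 \times \GL(V_2)$ with $Q_1$ mirabolic in $\GL(V_1)$; then $x^\theta \in \overline{\mathcal{O}_Q^K}$ forces $x_2 = 0$ and $x_1^2 = 0$, $\rk x_1 \le 1$. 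Meanwhile $x \in \overline{\mathcal{O}_P^G} = G\mathfrak{n}_\mathfrak{p}$ gives nilpotency of $x$, hence of its ``lower-right action''; combined with $x_2 = 0$, the operator $x$ preserves the flag structure enough that $x_{21}x_{12}$ can be controlled. The cleanest route: since $x_2 = 0$, compute $x^2 = \mattwo{x_1^2 + x_{12}x_{21}}{x_1x_{12}}{x_{21}x_1}{x_{21}x_{12}}$; more usefully, look at the restriction/quotient of $x$ on the $x$-invariant subspaces built from $V_2$. In fact $x(V_2) \subset V_1 \oplus 0$ composed with... — here one shows $x_{21}x_{12} = (x^2)|_{V_2}$-related up to the $x_2=0, x_1$-nilpotent correction, and since $x$ is globally nilpotent, $x_{21}x_{12}$ is nilpotent too. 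I would phrase this as: the subspace $V_2 + x(V_2) = V_2 + \mathrm{Im}(x_{21})$... and track nilpotency through $x|_{\text{that subspace}}$.

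\textbf{Main obstacle.} The delicate case is (a): the single condition ``$x$ globally nilpotent and $x_2$ (or $x_1$) has the mirabolic-closure Jordan type'' must be leveraged to conclude $x_{21}x_{12}$ (equivalently $x_{12}x_{21}$) is nilpotent, and unlike case (b) we do \emph{not} have $x^2 = 0$ for free — $P$ is arbitrary, so $x$ can have large Jordan blocks. The right idea is presumably to observe that nilpotency of $x$ together with $x_2 = 0$ implies that $x_{12}x_{21}$ is nilpotent: indeed if $v \in V_1$ is a generalized eigenvector of $x_{12}x_{21}$ for eigenvalue $c \ne 0$, one builds a corresponding generalized eigenvector of $x$ in $V$ for $c$ (or for a square root of $c$) using $x_2 = 0$ to close up the computation, contradicting nilpotency of $x$. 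I expect the bulk of the real work to be making this eigenvector-transfer argument precise (keeping track of the Jordan-block structure rather than just eigenvalues), and checking that the mirabolic hypothesis on $Q$ is exactly what is needed to guarantee $x_2 = 0$ (or symmetrically $x_1 = 0$) — everything else is the short two-step computation of case (b).
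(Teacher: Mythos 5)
Your case (b) is correct and coincides with the paper's argument: $P$ maximal forces $x^2=0$, and then $(x^{-\theta})^2=-(x^\theta)^2$ is nilpotent because $x^\theta$ is. One small repair: the nilpotency of $x^\theta$ is not ``automatic'' from $x^2=0$ as your parenthetical suggests ($x_1$ is a compression of $x$, not a restriction, and $x^2=0$ only gives $x_1^2=-x_{12}x_{21}$); it is exactly the hypothesis $x^\theta\in\overline{\mathcal{O}_Q^K}\subset\mathcal{N}(\mathfrak{k})$.

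Case (a), however, has a genuine gap, and it sits precisely where you place ``the bulk of the real work.'' The lemma you propose to prove --- that nilpotency of $x$ together with $x_2=0$ forces $x_{12}x_{21}$ to be nilpotent --- is false, even if one adds that $x_1$ is nilpotent. Take $p=q=3$, $x_2=0$, $x_{21}=1_3$,
\[
x_1=\begin{pmatrix}0&1&0\\0&0&1\\0&0&0\end{pmatrix},\qquad
x_{12}=\begin{pmatrix}0&0&-1\\0&0&0\\1&0&0\end{pmatrix}.
\]
Then $\det(t1_6-x)=\det\bigl(t(t1_3-x_1)-x_{12}x_{21}\bigr)=t^6$, so $x$ is nilpotent and $x^\theta=\diag(x_1,0)$ is nilpotent, yet $x_{12}x_{21}=x_{12}$ satisfies $(x_{12}x_{21})^2=\diag(-1,0,-1)$, so $x^{-\theta}$ is not nilpotent. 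Hence no eigenvector-transfer argument can succeed on the basis of ``$x$ nilpotent and $x_2=0$'' alone: the mirabolic hypothesis gives strictly more, namely $x_1^2=0$ and $\rk x_1\le 1$, and this extra information must enter the proof in an essential way. The paper uses it as follows: setting $x(\alpha)=\mattwo{\alpha x_1}{x_{12}}{x_{21}}{0}$, one has $\det(t1_n-x(\alpha))=t^{q-p}\det\bigl(t(t1_p-\alpha x_1)-x_{12}x_{21}\bigr)$; after conjugating $x_1$ (rank $\le 1$, square zero) to a multiple of a single off-diagonal matrix unit, multilinearity of the determinant shows this equals $t^{q-p}\bigl(f(t^2)+\alpha\,t\,g(t^2)\bigr)$, and nilpotency of $x=x(1)$ forces $f(t^2)=t^{2p}$ and $tg(t^2)=0$, so every $x(\alpha)$ --- in particular $x^{-\theta}=x(0)$ --- is nilpotent. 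Your sketch contains neither this use of the rank-one condition nor any workable substitute, so case (a) is not established.
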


\begin{proof}
First assume that we are in Case (a), i.e., $ Q $ is mirabolic.  
Without loss of generality we may take $Q=Q_1 \times GL(V_2)$, where 
$Q_1$ is the stabilizer of a line in $V_1$.  
In this situation the inclusion $x^\theta\in\overline{\mathcal{O}_Q^K}$ simply means that
\begin{equation}
\label{eq:x-preserves-line}
\mathrm{Im}\,x^\theta\subset L\subset\ker x^\theta
\end{equation}
for some line $L\subset V_1$.
Let us write
\begin{equation}\label{eq:x-is-ABCD}
x=\left(\begin{array}{cc}
a & b \\ c & d
\end{array}\right)
\end{equation}
with $a\in M_p(\mathbb{C})$, $b\in M_{p,q}(\mathbb{C})$, $c\in M_{q,p}(\mathbb{C})$, and
$d\in M_q(\mathbb{C})$. Relation \eqref{eq:x-preserves-line}
implies that
$d=0$ and $a$ is a nilpotent matrix of rank $\leq 1$.
Moreover, by assumption, the matrix $x$ is also nilpotent.
For $\alpha\in\mathbb{C}$, we set 
\[
x(\alpha)=\left(\begin{array}{cc}
\alpha a & b \\ c & 0
\end{array}\right) . 
\]
So, $x=x(1)$, and
we need to show that the matrix $x^{-\theta}=x(0)$ is also nilpotent.
To do this, we compute the characteristic polynomial of $x(\alpha)$,
\begin{eqnarray*}
\det(t1_{p+q}-x(\alpha)) & = & \det\left(\begin{array}{cc}t1_p-\alpha a & -b \\ -c & t1_q\end{array}\right) \\
 & = & \det\left[\left(\begin{array}{cc}1_p & - t^{-1}b \\ 0 & 1_q\end{array}\right) 
 \left(\begin{array}{cc}t1_p-\alpha a-t^{-1}bc & 0 \\ -c & t1_q\end{array}\right)\right] \\
 & = & t^{q-p}\det\big(t(t1_p-\alpha a)-bc\big).
\end{eqnarray*}
Since $\rk a\leq 1$, up to conjugating by an element of $\GL_p(\mathbb{C})$, we may assume
that the matrix $a$ has at most one nonzero coefficient $a_{i,j}$ with $i\not=j$.
Then, the multilinearity of the determinant implies that
\[\det\big(t(t1_p-\alpha a)-bc\big)=f(t^2)+\alpha \, t \, g(t^2)\]
for some polynomials $f$ and $g$.
For $\alpha=1$, as the matrix $x(1)=x$ is nilpotent, we must have
\[t^{p+q}=t^{q-p}(f(t^2)+tg(t^2)).\]
This equality forces $f(t^2)=t^{2p}$ and $tg(t^2)=0$.
Thereby,
\[\det(t1_{p+q}-x(\alpha))=t^{p+q}\ \mbox{ for all $\alpha\in\mathbb{C}$}.\]
Thus $x(\alpha)$ is nilpotent for all $\alpha$.
In particular, $x^{-\theta}=x(0)$ is nilpotent.

Second, let us assume that we are in Case (b), i.e., $P$ is maximal.
In that case (as noted in Section \ref{section-3-2}) the Richardson orbit of $P$ consists of two-step nilpotent matrices, hence 
$x^2=0$.  
If we express $ x $ as in \eqref{eq:x-is-ABCD}, we get 
\begin{equation}
\label{14}
x^2 = \begin{pmatrix}
a^2 + bc & ab + bd 
\\
ca + dc & cb + d^2
\end{pmatrix}
= 0.
\end{equation}
Whence 
\begin{equation}\label{eq:square-of-x-minus-theta}
(x^{-\theta})^2 
= \begin{pmatrix}
bc & 0 
\\
0 & cb
\end{pmatrix}
= - \begin{pmatrix}
a^2 &  0 
\\
 0  & d^2
\end{pmatrix}=-(x^\theta)^2.
\end{equation}
Since $x^{\theta}$ is nilpotent, we conclude from (\ref{eq:square-of-x-minus-theta}) that $x^{-\theta}$ is also nilpotent.
\end{proof}

\begin{remark}
It follows from (\ref{14}) and (\ref{eq:square-of-x-minus-theta}) that
if $x$ and $x^\theta$ are respectively two-step nilpotent and $k$-step nilpotent, then 
$x^{-\theta}$ is at most $(k+1)$-step nilpotent when $k$ is odd and  at most $k$-step nilpotent when $k$ is even.
\end{remark}

\begin{corollary}
\label{corollary-4-3}
Under the assumptions of Proposition \ref{proposition-4-2},
the map $\pi:\mathcal{Y}\to K/Q\times\mathcal{N}(\mathfrak{s})$, $(\mathfrak{p}_1,\mathfrak{q}_1,x)\mapsto (\mathfrak{q}_1,x^{-\theta})$ is well defined.
\end{corollary}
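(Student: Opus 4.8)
The plan is to deduce the statement directly from Proposition~\ref{proposition-4-2}; no new argument is needed. Recall from \eqref{eq:conormal-variety-Y} that the conormal variety is
\[
\mathcal{Y}=\{(\mathfrak{p}_1,\mathfrak{q}_1,x)\in G/P\times K/Q\times\mathfrak{g}:x\in\mathfrak{n}_{\mathfrak{p}_1},\ x^\theta\in\mathfrak{n}_{\mathfrak{q}_1}\},
\]
where $\mathfrak{p}_1$ ranges over the $G$-conjugates of $\mathfrak{p}=\Lie(P)$ and $\mathfrak{q}_1$ over the $K$-conjugates of $\mathfrak{q}=\Lie(Q)$. First I would fix a point $(\mathfrak{p}_1,\mathfrak{q}_1,x)\in\mathcal{Y}$ and observe that, since $\mathfrak{p}_1$ is $G$-conjugate to $\mathfrak{p}$, we have $\mathfrak{n}_{\mathfrak{p}_1}\subset G\mathfrak{n}_{\mathfrak{p}}=\overline{\mathcal{O}_P^G}$, using the description of the Richardson orbit closure recalled in Section~\ref{section-3-2}; hence $x\in\overline{\mathcal{O}_P^G}$. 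In the same way, $x^\theta\in\mathfrak{n}_{\mathfrak{q}_1}\subset K\mathfrak{n}_{\mathfrak{q}}=\overline{\mathcal{O}_Q^K}$. Therefore $x$ satisfies the two hypotheses appearing in the conclusion of Proposition~\ref{proposition-4-2}.

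Next, under either assumption (a) or assumption (b) of that proposition, Proposition~\ref{proposition-4-2} tells us that $x^{-\theta}$ is nilpotent; since moreover $x^{-\theta}\in\mathfrak{s}$ by the very definition of the decomposition $\mathfrak{g}=\mathfrak{k}\oplus\mathfrak{s}$ (Notation~\ref{notation-1-5new}), we conclude $x^{-\theta}\in\mathcal{N}(\mathfrak{s})$. Consequently the assignment $(\mathfrak{p}_1,\mathfrak{q}_1,x)\mapsto(\mathfrak{q}_1,x^{-\theta})$ has image contained in $K/Q\times\mathcal{N}(\mathfrak{s})$, and since this assignment is visibly single-valued (no choices enter the formation of $x^{-\theta}$), the map $\pi$ is well defined.

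There is essentially no obstacle: all the substance lies in Proposition~\ref{proposition-4-2}, and the corollary merely records that every triple of $\mathcal{Y}$ meets the hypotheses of that proposition. The one minor point worth spelling out is the inclusion $\mathfrak{n}_{\mathfrak{p}_1}\subset\overline{\mathcal{O}_P^G}$ (and its $K$-counterpart $\mathfrak{n}_{\mathfrak{q}_1}\subset\overline{\mathcal{O}_Q^K}$), which is immediate from the identities $\overline{\mathcal{O}_P^G}=G\mathfrak{n}_{\mathfrak{p}}$, $\overline{\mathcal{O}_Q^K}=K\mathfrak{n}_{\mathfrak{q}}$ together with the conjugacy of $\mathfrak{p}_1,\mathfrak{p}$ and of $\mathfrak{q}_1,\mathfrak{q}$. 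Any sharper information about $x^{-\theta}$ (e.g.\ that it is $2$-step nilpotent, via \eqref{14} and \eqref{eq:square-of-x-minus-theta} when $P$ is maximal) is not needed here and belongs to the proof of Proposition~\ref{proposition-2-8}.
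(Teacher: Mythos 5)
Your proposal is correct and is essentially identical to the paper's own proof: both observe that $x\in\mathfrak{n}_{\mathfrak{p}_1}\subset\overline{\mathcal{O}_P^G}$ and $x^\theta\in\mathfrak{n}_{\mathfrak{q}_1}\subset\overline{\mathcal{O}_Q^K}$ for any point of $\mathcal{Y}$, and then invoke Proposition~\ref{proposition-4-2}. You merely spell out the conjugacy argument for the inclusions a bit more explicitly than the paper does.
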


\begin{proof}
By (\ref{eq:conormal-variety-Y}) we have $x\in \mathfrak{n}_{\mathfrak{p}_1}\subset\overline{\mathcal{O}_P^G}$ and $x^\theta\in\mathfrak{n}_{\mathfrak{q}_1}\subset\overline{\mathcal{O}_Q^K}$ whenever $(\mathfrak{p}_1,\mathfrak{q}_1,x)\in\mathcal{Y}$. The conclusion follows from Proposition \ref{proposition-4-2}.
\end{proof}

\subsection{Properties of the map $\pi$ for $Q$ mirabolic}
In what follows we restrict our attention to pairs $(P,Q)$ of standard parabolic subgroups of $G$ and $K$ such that $Q$ is mirabolic. Specifically we take
\begin{equation}
\label{Q-8}
Q=\{g\in K:g(L_1)\subset L_1\}
\end{equation}
for the line $L_1=\langle (1,0,\ldots,0)\rangle_\mathbb{C}\subset \mathbb{C}^p=V_1$. Thus $K/Q$ is the projective space $\mathbb{P}(V_1)$.

Let $P=P_{\underline{d}}$ be a standard parabolic subgroup of $G$ corresponding to the
composition $\underline{d}=(d_1,\ldots,d_r)$.
Thus, the double flag variety $\Xfv=G/P\times K/Q$ interprets as the product
\[\Xfv=\mathcal{F}_{\underline{d}}\times \mathbb{P}(V_1)\]
whose elements are pairs $(W_\bullet,L)$ where $L\subset V_1$ is a line and
$W_\bullet=(W_0=0\subset W_1\subset \ldots\subset W_r=V)$ is a partial flag with $\dim W_i/W_{i-1}=d_i$.
The conormal variety $\mathcal{Y}$ is then
\begin{eqnarray}
\label{17}
\mathcal{Y}=\{(W_\bullet,L,x)\in\mathcal{F}_{\underline{d}}\times\mathbb{P}(V_1)\times\mathcal{L}(V):\mathrm{Im}\,x^\theta\subset L\subset\ker x^\theta \\ \mbox{and }x(W_i)\subset W_{i-1}\ \forall i\geq 1\}. \nonumber
\end{eqnarray}
By Corollary \ref{corollary-4-3}, the map $\pi:\mathcal{Y}\to\mathbb{P}(V_1)\times\mathcal{N}(\mathfrak{s})$, $(W_\bullet,L,x)\mapsto (L,x^{-\theta})$ is well defined.
Some technical properties of this map are described in the next lemma.

\begin{lemma}\label{lemma-4-4}
Let $(W_\bullet,L,x)\in \mathcal{Y}$. Write $L=\langle v\rangle_\mathbb{C}$ with $v\in M_{p,1}(\mathbb{C})\cong V_1$, $v\not=0$. \\
{\rm (a)} We can write
\[x=\left(\begin{array}{cc}
\eta & a \\ b & 0
\end{array}\right)\]
where $\eta=v\cdot{}^tu$ for some $u\in M_{p,1}(\mathbb{C})$ such that ${}^tu\cdot v=0$, and with $a\in M_{p,q}(\mathbb{C})$, $b\in M_{q,p}(\mathbb{C})$. 
\\
{\rm (b)} We have $\rk\,x^{-\theta}\in\{\rk x,\rk x-1\}$.
Moreover, the equality $\rk x=\rk x^{-\theta}$ holds if and only if 
$(v\in\mathrm{Im}\,a$ or $u\in\mathrm{Im}\,{}^tb)$. \\
{\rm (c)} If $x^2=0$, then $(x^{-\theta})^2=0$.
\end{lemma}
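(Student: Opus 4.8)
The plan is to work directly with the block decomposition of $x$ induced by $V=V_1\oplus V_2$, using the mirabolic constraint \eqref{eq:x-preserves-line} to pin down the shape of $x^\theta$. For part (a), the condition $\mathrm{Im}\,x^\theta\subset L\subset\ker x^\theta$ with $L=\langle v\rangle_\mathbb{C}$ forces the $\GL(V_2)$-block $d$ of $x$ to vanish (since $x^\theta=\mathrm{diag}(a^\theta,d)$ must kill all of $V_2\subset\ker x^\theta$ and land in $L\subset V_1$), and forces the $\GL(V_1)$-block, which I will call $\eta$, to be a rank-$\leq 1$ endomorphism of $V_1$ with image in $L$ and $L$ in its kernel. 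Writing $\eta=v\cdot{}^tu$ for a suitable column vector $u$, the condition $L\subset\ker\eta$ reads exactly ${}^tu\cdot v=0$. That gives the asserted form of $x$, with $a,b$ the off-diagonal blocks, which are unconstrained at this stage.

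For part (b), I will compute $x^{-\theta}=\left(\begin{smallmatrix}0&a\\b&0\end{smallmatrix}\right)$ and compare $\rk x^{-\theta}=\rk a+\rk b$ with $\rk x$. The point is that $x$ differs from $x^{-\theta}$ only by the rank-$\leq 1$ perturbation $\eta=v\,{}^tu$ sitting in the top-left block; hence $|\rk x-\rk x^{-\theta}|\leq 1$, and since $\eta$ has image contained in $L=\langle v\rangle$, adding it can only change the rank by adding or removing the single "row direction" $v$, never subtracting, so in fact $\rk x\in\{\rk x^{-\theta},\rk x^{-\theta}+1\}$ — equivalently $\rk x^{-\theta}\in\{\rk x,\rk x-1\}$. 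For the equality criterion I will argue that $\rk x=\rk x^{-\theta}$ iff the column $v$ (appended to the matrix $x^{-\theta}$ in the top block) already lies in the column span of $x^{-\theta}$, and analyze this span. Concretely, $\mathrm{Im}\,x^{-\theta}=\mathrm{Im}\,a\oplus\mathrm{Im}\,b$; the perturbation affects rank exactly when the extra generator coming from $\eta$ is \emph{not} already there, and a short computation with the block structure (taking transposes to handle the $b$-contribution symmetrically) shows the rank is preserved precisely when $v\in\mathrm{Im}\,a$ or $u\in\mathrm{Im}\,{}^tb$. I expect this bookkeeping — correctly tracking when the rank-one bump is absorbed, treating the $a$-side and the $b$-side on equal footing — to be the main obstacle; the cleanest route is probably to use that $x$ and $x^{-\theta}$ have the same image plus or minus $\langle v\rangle$ and the same kernel plus or minus $\langle$(the covector ${}^tu)^\perp\rangle$, and phrase everything via ranks of the $2\times 1$ block matrices $\bigl(a\mid v\bigr)$ and $\bigl({}^tb\mid u\bigr)$.

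For part (c), I will simply expand $(x^{-\theta})^2$ in blocks and use the hypothesis $x^2=0$. By \eqref{eq:square-of-x-minus-theta}-type computation (valid verbatim here even though $P$ need not be maximal, because the present hypothesis is $x^2=0$ directly), $(x^{-\theta})^2=\left(\begin{smallmatrix}ab&0\\0&ba\end{smallmatrix}\right)$, while the off-diagonal blocks of $x^2=0$ give $ab=-\,b\cdot 0=-\eta b\cdot$(something)\ldots more carefully: the $(1,1)$-block of $x^2$ is $\eta^2+ab=0$ and the $(2,2)$-block is $ba=0$. Since $\eta=v\,{}^tu$ with ${}^tu\,v=0$ we get $\eta^2=v({}^tu\,v){}^tu=0$, hence $ab=0$, and together with $ba=0$ this yields $(x^{-\theta})^2=0$. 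This last part is routine; the substance of the lemma is parts (a) and (b).
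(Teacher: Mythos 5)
Your parts (a) and (c) are essentially the paper's proof: (a) is read off from $\mathrm{Im}\,x^\theta\subset L\subset\ker x^\theta$, and (c) is the block computation $x^2=\left(\begin{smallmatrix}\eta^2+ab&\eta a\\ b\eta&ba\end{smallmatrix}\right)$ together with $\eta^2=v\,({}^tu\cdot v)\,{}^tu=0$, giving $ab=ba=0$.

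Part (b) is where you diverge from the paper, and where your argument has a real soft spot. The inequality $\rk x^{-\theta}\leq\rk x$ does \emph{not} follow from the principle you invoke: a rank-one perturbation whose image lies in a fixed line can perfectly well decrease the rank (take $M=\left(\begin{smallmatrix}1&0\\0&0\end{smallmatrix}\right)$ and add $-M$). What saves the claim here is the block structure: $\mathrm{Im}\,x$ contains $\mathrm{Im}\,a\subset V_1$ and projects onto $\mathrm{Im}\,b\subset V_2$, whence $\rk x\geq\rk a+\rk b=\rk x^{-\theta}$. The paper gets the same inequality differently, by noting that $x(\alpha)=\left(\begin{smallmatrix}\alpha\eta&a\\ b&0\end{smallmatrix}\right)$ equals $\diag(\alpha 1_p,1_q)\,x\,\diag(1_p,\alpha^{-1}1_q)$ for $\alpha\neq0$, hence has constant rank, and then invoking lower semicontinuity of the rank at $\alpha=0$. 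Either repair is one line, but as written your step is not a proof. For the equality criterion, your rank-one-update strategy does close once the inequality is secured: $\rk x=\rk x^{-\theta}+1$ holds iff the column $\binom{v}{0}$ lies outside $\mathrm{Im}\,x^{-\theta}=\mathrm{Im}\,a\oplus\mathrm{Im}\,b$ \emph{and} the row $({}^tu,0)$ lies outside the row space of $x^{-\theta}$, which is exactly the negation of ($v\in\mathrm{Im}\,a$ or $u\in\mathrm{Im}\,{}^tb$). The paper instead treats the three cases directly --- $v\in\mathrm{Im}\,a$ forces $\mathrm{Im}\,x\subset\mathrm{Im}\,x^{-\theta}$; $u\in\mathrm{Im}\,{}^tb$ forces $\ker x^{-\theta}\subset\ker x$; and if both fail one produces $w\in\ker b\setminus\ker\eta$ showing $\mathrm{Im}\,x=\mathrm{Im}\,\eta\oplus\mathrm{Im}\,x^{-\theta}$ --- which avoids quoting a rank-one-update lemma but is exactly the bookkeeping you flagged as the main obstacle and did not carry out.
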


\begin{proof}
(a) The form of the matrix is a consequence of the property
$\mathrm{Im}\,x^\theta\subset L \subset \ker x^\theta$. \\
(b) For all $\alpha\in\mathbb{C}^*$, the matrix
\[x(\alpha)=\left(\begin{array}{cc}
\alpha\eta & a \\ b & 0
\end{array}\right)\]
has same rank as $x$. The lower semicontinuity of the rank yields
$\rk x^{-\theta}=\rk x(0)\leq \rk x$.

Assume that $v\in\mathrm{Im}\,a$. Then, $\mathrm{Im}\,\eta\subset\mathrm{Im}\,a$ and we get
\[\mathrm{Im}\,x\subset \mathrm{Im}\,b\oplus(\mathrm{Im}\,a+\mathrm{Im}\,\eta)=\mathrm{Im}\,b\oplus\mathrm{Im}\,a=\mathrm{Im}\,x^{-\theta}.\]
Since $\rk x^{-\theta}\leq\rk x$, the equality $\rk x^{-\theta}=\rk x$ must hold.

Assume that $u\in\mathrm{Im}\,{}^tb$. This implies that $\ker b\subset \ker \eta$. In this case, one has
\[\ker x^{-\theta}=\ker b\oplus \ker a=(\ker b\cap\ker\eta)\oplus\ker a\subset \ker x.\]
Since $\rk x^{-\theta}\leq \rk x$, we get $\ker x^{-\theta}=\ker x$, whence $\rk x^{-\theta}=\rk x$.

Finally, suppose that $v\notin\mathrm{Im}\,a$ and $u\notin\mathrm{Im}\,{}^tb$.  The latter condition yields an element $w\in\ker b\setminus\ker\eta$. On the one hand, the fact that $w\notin \ker \eta$ implies that $\eta w=\alpha v$ for some $\alpha\in\mathbb{C}^*$. On the other hand, since $w\in\ker b$, we get
\[\alpha v=\eta w=xw\in\mathrm{Im}\,x,\quad\mbox{so}\ \ \mathrm{Im}\,\eta\subset\mathrm{Im}\,x.\]
The latter inclusion implies $\mathrm{Im}\,b\subset \mathrm{Im}\,x$.
Moreover, we always have $\mathrm{Im}\,a\subset\mathrm{Im}\,x$.
Thereby,
\[\mathrm{Im}\,x=(\mathrm{Im}\,a+\mathrm{Im}\,\eta)\oplus\mathrm{Im}\,b=\mathrm{Im}\,a\oplus\mathrm{Im}\,\eta\oplus\mathrm{Im}\,b=\mathrm{Im}\,\eta\oplus\mathrm{Im}\,x^{-\theta},\]
where the second equality follows from the assumption that $v\notin\mathrm{Im}\,a$. This yields
\[\rk x=\rk x^{-\theta}+1.\]
{\rm (c)} Note that
\[x^2=\left(\begin{array}{cc}
\eta^2+ab & \eta a \\ b\eta & ba
\end{array}\right).\]
Since $\eta^2=0$,
the assumption that $x^2=0$ yields $ab=0$ and
$ba=0$, which guarantees that $(x^{-\theta})^2=0$.
\end{proof}

\subsection{Proof of Proposition \ref{proposition-2-8}}\label{subsection:proof-prop-2-8}

Here we consider the situation of Proposition \ref{proposition-2-8}: $Q$ is a mirabolic subgroup of $K$ as in (\ref{Q-8}) and $P=P_{\underline{d}}\subset G$ is the maximal parabolic subgroup corresponding to the composition $\underline{d}=(k,n-k)$.
As noted in Section \ref{section-3-2}, the Richardson
orbit corresponding to $P$ is $\mathcal{O}_P^G=\mathcal{O}^G_{(\max\{k,n-k\},\min\{k,n-k\})^*}$, which means that every $x\in\overline{\mathcal{O}_P^G}$ satisfies $x^2=0$ and $\rk x\leq\min\{k,n-k\}$. Corollary \ref{corollary-4-3}, Lemma \ref{lemma-4-4}\,{\rm (b)} and {\rm (c)}, and the definition of $\mathfrak{E}=\mathbb{P}(V_1)\times\mathcal{N}(\mathfrak{s})_2^k$ in Notation \ref{notation-1-5}\,{\rm (c)} imply that the map
\[\pi:\mathcal{Y}\to\mathfrak{E},\ (W,L,x)\mapsto (L,x^{-\theta})\]
is well defined.

For every $(L,z)\in\mathfrak{E}$, we have in particular $z\in\overline{\mathcal{O}_P^G}$. This inclusion guarantees that the Spaltenstein variety $\mathcal{F}_{z,(k,n-k)}\subset\Gr_k(V)$ is nonempty (see Section \ref{section-3-3}). Any $W\in\mathcal{F}_{z,(k,n-k)}$ satisfies $(W,L,z)\in\mathcal{Y}$ and $\pi((W,L,z))=(L,z)$. Therefore, the map $\pi$ is surjective onto $\mathfrak{E}$.
This completes the proof of Proposition \ref{proposition-2-8}.

\section{Proof of Theorem~\ref{theorem-2}: Orbit correspondence}

In Section \ref{section-5-1} we show an abstract correspondence between the $K$-orbits of the exotic nilpotent cone and the components of the conormal variety associated to the double flag variety $G/P\times K/Q$,
in the case where $Q\subset K$ is mirabolic and for $P\subset G$ arbitrary.
In Sections \ref{section-5-2}--\ref{section-5-4} we assume in addition $P$ maximal, 
so that we recover the situation of the exotic Grassmannian $\Gr_k(V)\times\mathbb{P}(V_1)$, 
and we prove Theorem~\ref{theorem-2} by using the abstract result of Section \ref{section-5-1}.

\subsection{Abstract correspondence}\label{section-5-1}

All along the section we take $Q$ mirabolic, specifically we assume that $Q\subset K$ is the stabilizer of a line of $V_1$, so that $K/Q=\mathbb{P}(V_1)$. In this subsection we let
$P=P_{\underline{d}}\subset G$ be an arbitrary (standard) parabolic subgroup, corresponding to a composition $\underline{d}=(d_1,\ldots,d_r)$.
As in Section \ref{section-3-2} we denote by $\mathcal{O}_P^G\subset\mathcal{N}(\mathfrak{g})$ the Richardson orbit corresponding to $P$.
The conormal variety $\mathcal{Y}$ is as in (\ref{17}), and it is equidimensional (this follows from Proposition \ref{proposition-1}; see Section \ref{section-1-3}).

Set 
\[\mathfrak{n}_L=\{z\in\mathfrak{k}=\mathcal{L}(V_1)\times\mathcal{L}(V_2):\mathrm{Im}\,z\subset L\subset\ker z\}\] 
(this is the nilradical of $\Lie(Q)$ in $\mathfrak{k}$). Hence
\[
\mathcal{Y}=\{(W_\bullet,L,x)\in\mathcal{F}_{\underline{d}}\times\mathbb{P}(V_1)\times\mathcal{N}(\mathfrak{g}):
x^\theta\in\mathfrak{n}_L\mbox{ and $x(W_i)\subset W_{i-1}$ for all $i\geq 1$}\}.
\]
By Corollary \ref{corollary-4-3} the
map 
\[\pi:\mathcal{Y}\to \mathbb{P}(V_1)\times\mathcal{N}(\mathfrak{s}),\ (W_\bullet,L,x)\mapsto (L,x^{-\theta})\]
is well defined. We write $\pi=\psi\circ\phi$ where
\[\phi:\mathcal{Y}\to\mathbb{P}(V_1)\times\mathcal{N}(\mathfrak{g}),\ (W_\bullet,L,x)\mapsto(L,x)\]
and
\[\psi:\mathcal{Z}:=\phi(\mathcal{Y})=\{(L,x)\in\mathbb{P}(V_1)\times\overline{\mathcal{O}_P^G}\overline{}:x^\theta\in\mathfrak{n}_L\}\to\mathbb{P}(V_1)\times\mathcal{N}(\mathfrak{s}),\ (L,x)\mapsto (L,x^{-\theta}).\]
Furthermore we set $\mathfrak{E}=\pi(\mathcal{Y})=\psi(\mathcal{Z})$. 

It is easy to see that $K$ acts diagonally on $\mathcal{Y}$, $\mathcal{Z}$, and $\mathfrak{E}$, 
and the maps $\phi$ and $\psi$ are $K$-equivariant.
By Proposition \ref{proposition-2-1}, $\mathfrak{E}$ has a finite decomposition into $K$-orbits
\[\mathfrak{E}=\bigsqcup_{i=1}^m\mathfrak{O}_i^K.\]
For every $i\in\{1,\ldots,m\}$ and 
every partition $\lambda\vdash n$ for which 
$ \mathcal{O}_{\lambda}^G \subset \overline{\mathcal{O}_P^G}$, we denote
\[\mathcal{Z}_i=\psi^{-1}(\mathfrak{O}_i^K)\quad\mbox{and}\quad
\mathcal{Z}_i^\lambda=\{(L,x)\in\mathcal{Z}_i:x\in\mathcal{O}_{\lambda}^G \}.\]
Thus every subset $\mathcal{Z}_i$ or $\mathcal{Z}_i^\lambda$ is a locally closed subvariety of $\mathcal{Z}$, and every inverse image $\phi^{-1}(\mathcal{Z}_i)$ or $\phi^{-1}(\mathcal{Z}_i^\lambda)$ is a locally closed subvariety of $\mathcal{Y}$.
Note that
\begin{equation}
\label{18-new}
\mathcal{Z}_i^\lambda=\{h(L_i,x):h\in K,\ x\in\mathcal{O}_\lambda^G\cap(z_i+\mathfrak{n}_{L_i})\}\quad\mbox{whenever $(L_i,z_i)\in\mathfrak{O}_i^K$.}
\end{equation}
Note also that the restriction 
$\phi|_{\phi^{-1}(\mathcal{Z}_i^\lambda)}:\phi^{-1}(\mathcal{Z}_i^\lambda)\to \mathcal{Z}_i^\lambda$ 
is a fibration whose fiber is the Spaltenstein variety $\mathcal{F}_{\lambda,\underline{d}}$, thus we have $\dim\phi^{-1}(\mathcal{Z}_i^\lambda)=\dim\mathcal{Z}_i^\lambda+\dim\mathcal{F}_{\lambda,\underline{d}}$.

\begin{definition}
\label{definition-good-pair}
For a $K$-orbit $\mathfrak{O}_i^K\subset\mathfrak{E}$ and a partition $\lambda\vdash n$ with $\mathcal{O}_\lambda^G\subset\overline{\mathcal{O}_P^G}$,
we say that the pair $(\mathfrak{O}_i^K,\lambda)$ is {\em good} if $\mathcal{Z}_i^\lambda$ is nonempty and the equality
\[
\dim\mathcal{Z}_i^\lambda+\dim\mathcal{F}_{\lambda,\underline{d}}=\dim\mathcal{Y}
\]
holds. 
\end{definition}

The next statement is a general construction of the irreducible components of $\mathcal{Y}$. We use it later in a special case (for $P$ maximal) in the proof of Theorem \ref{theorem-2}.

\begin{proposition}
\label{proposition-5-1}
{\rm (a)}
For every irreducible component $\mathcal{C}\subset\mathcal{Y}$, there is a unique good pair $(\mathfrak{O}_i^K,\lambda)$ such that $\mathcal{C}\subset\overline{\phi^{-1}(\mathcal{Z}_i^\lambda)}$. \\
{\rm (b)}
Conversely, if $(\mathfrak{O}_i^K,\lambda)$ is a good pair, then every component of $\overline{\phi^{-1}(\mathcal{Z}_i^\lambda)}$ of maximal dimension is a component of $\mathcal{Y}$.
In addition, there is a bijection 
\[\mathrm{Irr}_{\mathrm{max}}(\overline{\phi^{-1}(\mathcal{Z}_i^\lambda)})\cong \mathrm{Irr}_{\mathrm{max}}(\mathcal{Z}_i^\lambda)\times\mathrm{Irr}(\mathcal{F}_{\lambda,\underline{d}}),\]
where $\mathrm{Irr}_{\mathrm{max}}(Z)$ stands for the set of irreducible components of maximal dimension in  $Z$. \\
{\rm (c)} For a good pair $(\mathfrak{O}_i^K,\lambda)$, choose a point $(L_i,z_i)\in\mathfrak{O}_i^K$.
If the variety $\mathcal{O}_\lambda^G\cap(z_i+\mathfrak{n}_{L_i})$ is irreducible,
then $\mathcal{Z}_i^\lambda$ is irreducible,
$\overline{\phi^{-1}(\mathcal{Z}_i^\lambda)}$ is equidimensional,
and there is a bijection
$\mathrm{Irr}(\overline{\phi^{-1}(\mathcal{Z}_i^\lambda)})\cong \mathrm{Irr}(\mathcal{F}_{\lambda,\underline{d}})$.\\
\end{proposition}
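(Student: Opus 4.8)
The plan is to decompose $\mathcal{Y}$ into the locally closed pieces $\phi^{-1}(\mathcal{Z}_i^\lambda)$ indexed by pairs $(\mathfrak{O}_i^K,\lambda)$, and to use the equidimensionality of $\mathcal{Y}$ (from Proposition \ref{proposition-1} and Section \ref{section-1-3}) together with the fibration structure of $\phi$ over each $\mathcal{Z}_i^\lambda$. First I would observe that $\mathcal{Y}=\bigsqcup_{i,\lambda}\phi^{-1}(\mathcal{Z}_i^\lambda)$ is a finite partition into locally closed subvarieties, so any irreducible component $\mathcal{C}\subset\mathcal{Y}$ satisfies $\mathcal{C}=\overline{\mathcal{C}\cap\phi^{-1}(\mathcal{Z}_i^\lambda)}$ for at least one index $(i,\lambda)$ (namely one for which $\mathcal{C}\cap\phi^{-1}(\mathcal{Z}_i^\lambda)$ is dense in $\mathcal{C}$, which forces $\mathcal{C}\subset\overline{\phi^{-1}(\mathcal{Z}_i^\lambda)}$). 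Since $\mathcal{Y}$ is equidimensional of dimension $\dim\mathcal{Y}$, such $\mathcal{C}$ has this dimension; combined with $\dim\phi^{-1}(\mathcal{Z}_i^\lambda)=\dim\mathcal{Z}_i^\lambda+\dim\mathcal{F}_{\lambda,\underline{d}}$ (the fibration remark preceding Definition \ref{definition-good-pair}) and $\dim\phi^{-1}(\mathcal{Z}_i^\lambda)\le\dim\mathcal{Y}$ (inclusion $\phi^{-1}(\mathcal{Z}_i^\lambda)\subset\mathcal{Y}$), we get $\dim\mathcal{Z}_i^\lambda+\dim\mathcal{F}_{\lambda,\underline{d}}=\dim\mathcal{Y}$, i.e.\ the pair is good. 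Uniqueness of the good pair attached to $\mathcal{C}$ follows because if $\mathcal{C}$ were contained in two distinct $\overline{\phi^{-1}(\mathcal{Z}_i^\lambda)}$, then $\mathcal{C}$ would be disjoint from at least one of the (dense, open-in-its-closure) strata, contradicting $\dim\mathcal{C}=\dim\phi^{-1}(\mathcal{Z}_i^\lambda)$ for that stratum; I would phrase this via the standard fact that the $\phi^{-1}(\mathcal{Z}_i^\lambda)$ are pairwise disjoint and each is open in its own closure. This proves (a).

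For (b), given a good pair $(\mathfrak{O}_i^K,\lambda)$, the key point is that $\dim\overline{\phi^{-1}(\mathcal{Z}_i^\lambda)}=\dim\mathcal{Y}$, so each irreducible component of $\overline{\phi^{-1}(\mathcal{Z}_i^\lambda)}$ of maximal dimension is a closed irreducible subvariety of $\mathcal{Y}$ of dimension $\dim\mathcal{Y}=\dim\mathcal{C}$ for every component $\mathcal{C}$ of $\mathcal{Y}$; by equidimensionality of $\mathcal{Y}$ it must therefore itself be a component of $\mathcal{Y}$. For the asserted bijection $\mathrm{Irr}_{\mathrm{max}}(\overline{\phi^{-1}(\mathcal{Z}_i^\lambda)})\cong\mathrm{Irr}_{\mathrm{max}}(\mathcal{Z}_i^\lambda)\times\mathrm{Irr}(\mathcal{F}_{\lambda,\underline{d}})$, I would use that $\phi|_{\phi^{-1}(\mathcal{Z}_i^\lambda)}$ is a Zariski-locally trivial fibration (or at least a fibration with irreducible equidimensional fiber $\mathcal{F}_{\lambda,\underline{d}}$, which is equidimensional by Section \ref{section-3-3}): pulling back along such a map sends irreducible components of the base to unions of irreducible components of the total space, and the maximal-dimensional components of the total space are exactly the preimages $\overline{\phi^{-1}(\mathcal{C}')}$ of maximal-dimensional components $\mathcal{C}'$ of $\mathcal{Z}_i^\lambda$, refined according to the $\mathrm{Irr}(\mathcal{F}_{\lambda,\underline{d}})$ components of the fiber. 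This is a standard argument about components of a fibration with equidimensional fibers; the dimension bookkeeping $\dim\overline{\phi^{-1}(\mathcal{C}')}=\dim\mathcal{C}'+\dim\mathcal{F}_{\lambda,\underline{d}}$ is what singles out the maximal-dimensional components on both sides.

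For (c), if $\mathcal{O}_\lambda^G\cap(z_i+\mathfrak{n}_{L_i})$ is irreducible, then by the description \eqref{18-new}, $\mathcal{Z}_i^\lambda=K\cdot\bigl(\{L_i\}\times(\mathcal{O}_\lambda^G\cap(z_i+\mathfrak{n}_{L_i}))\bigr)$ is the image of an irreducible variety (a product of $K$ with an irreducible set) under the action map, hence irreducible; then $\overline{\phi^{-1}(\mathcal{Z}_i^\lambda)}$ is irreducible times the equidimensional $\mathcal{F}_{\lambda,\underline{d}}$ in the fibration sense, so it is equidimensional, and the bijection from (b) collapses to $\mathrm{Irr}(\overline{\phi^{-1}(\mathcal{Z}_i^\lambda)})\cong\mathrm{Irr}(\mathcal{F}_{\lambda,\underline{d}})$ since $\mathrm{Irr}_{\mathrm{max}}(\mathcal{Z}_i^\lambda)$ is a single element. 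The main obstacle, I expect, is making the fibration argument in (b) fully rigorous: one must be careful that $\phi$ restricted to $\phi^{-1}(\mathcal{Z}_i^\lambda)$ really is a locally trivial fibration with fiber $\mathcal{F}_{\lambda,\underline{d}}$ (this uses that over the $G$-orbit $\mathcal{O}_\lambda^G$ the Spaltenstein fibers fit into a locally trivial bundle, a standard fact recalled implicitly in Section \ref{section-3-3}), and that the operation "closure of preimage" interacts correctly with passing to irreducible components of maximal dimension — in particular that no spurious lower-dimensional components of $\overline{\phi^{-1}(\mathcal{Z}_i^\lambda)}$ sneak in as components of $\mathcal{Y}$, which is exactly where equidimensionality of $\mathcal{Y}$ is used.
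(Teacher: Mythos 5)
Your part (a) is correct and follows essentially the paper's own argument: the finite decomposition $\mathcal{Y}=\bigsqcup\phi^{-1}(\mathcal{Z}_i^\lambda)$, equidimensionality of $\mathcal{Y}$, the dimension count $\dim\mathcal{Y}=\dim\mathcal{C}\leq\dim\mathcal{Z}_i^\lambda+\dim\mathcal{F}_{\lambda,\underline{d}}\leq\dim\mathcal{Y}$ forcing goodness, and disjointness of the locally closed strata for uniqueness. Likewise your observation in (c) that $\mathcal{Z}_i^\lambda$ is irreducible, being the $K$-sweep of $\{L_i\}\times\bigl(\mathcal{O}_\lambda^G\cap(z_i+\mathfrak{n}_{L_i})\bigr)$, agrees with the paper.

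The gap is in (b), and it propagates to the rest of (c): the bijection $\mathrm{Irr}_{\mathrm{max}}(\overline{\phi^{-1}(\mathcal{Z}_i^\lambda)})\cong\mathrm{Irr}_{\mathrm{max}}(\mathcal{Z}_i^\lambda)\times\mathrm{Irr}(\mathcal{F}_{\lambda,\underline{d}})$ is precisely the point that requires proof, and you assume it. Note first that $\mathcal{F}_{\lambda,\underline{d}}$ is in general reducible (its components are indexed by semistandard tableaux), so your fallback formulation, ``a fibration with equidimensional fiber,'' is genuinely insufficient: for a map that is only a topological or \'etale-locally trivial fibration, monodromy can permute the components of the fiber, and the total space over an irreducible base may have strictly fewer components than the fiber --- this is exactly the phenomenon responsible for the quotient by $Z_G(x)$ acting on $\mathrm{Irr}(\mathcal{B}_x)$ in Steinberg's correspondence (\ref{1.2}). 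Your primary formulation, Zariski-local triviality of $\phi$ over $\mathcal{Z}_i^\lambda$, would indeed yield the product formula, but it is not ``a standard fact recalled implicitly in Section \ref{section-3-3}''; it needs the structure of centralizers in $\GL_n$. The missing idea --- and the one the paper actually uses --- is the connectedness of the relevant stabilizers: $H=\{h\in K:h(L_i,z_i)=(L_i,z_i)\}$ is connected because it is open in a linear space, and $Z_G(x)$ is connected for $G=\GL(V)$. The paper then bypasses local triviality altogether: it chooses $H$-stable strata of $\mathcal{O}_\lambda^G\cap(z_i+\mathfrak{n}_{L_i})$ and $Z_G(x_j)$-stable strata of the Spaltenstein fiber, sweeps them by $K$ and by suitable irreducible subsets of $G$ to get constructible pieces $\mathcal{Y}_{j,t}$, proves these are pairwise disjoint (this is exactly where connectedness enters, ruling out any identification of fiber components), and only then runs the dimension count to isolate the maximal-dimensional closures. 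Without an argument of this kind --- or a proof that the centralizers are special so that the orbit map, hence the Spaltenstein family, is Zariski-locally trivial --- the central claim of (b) is unsupported, even though the surrounding bookkeeping in your plan is fine.
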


\begin{proof}
{\rm (a)} We have
$\mathcal{Z}=\phi(\mathcal{Y})=\bigsqcup_{(\mathfrak{O}_i^K,\lambda)}\mathcal{Z}_i^\lambda$
(where the union is over all pairs $(\mathfrak{O}_i^K,\lambda)$, not necessarily good) hence
\begin{equation}
\label{19}
\mathcal{Y}=\bigsqcup_{(\mathfrak{O}_i^K,\lambda)}\phi^{-1}(\mathcal{Z}_i^\lambda)=\bigcup_{(\mathfrak{O}_i^K,\lambda)}\overline{\phi^{-1}(\mathcal{Z}_i^\lambda)}.
\end{equation}
Let a component $\mathcal{C}\subset\mathcal{Y}$. 
By (\ref{19}), there is a pair $(\mathfrak{O}_i^K,\lambda)$ such that $\mathcal{C}\subset\overline{\phi^{-1}(\mathcal{Z}_i^\lambda)}$. Since $\mathcal{Y}$ is equidimensional, we know that $\dim\mathcal{C}=\dim\mathcal{Y}$.
Whence
\[\dim\mathcal{Y}=\dim\mathcal{C}\leq \dim\phi^{-1}(\mathcal{Z}_i^\lambda)=\dim\mathcal{Z}_{i}^\lambda+\dim\mathcal{F}_{\lambda,\underline{d}}\leq\dim\mathcal{Y},\]
which forces the pair $(\mathfrak{O}_i^K,\lambda)$ to be good. 
Note that the pair $(\mathfrak{O}_i^K,\lambda)$ is necessarily unique, since 
$ \phi^{-1}(\mathcal{Z}_{i}^\lambda)\cap\mathcal{C}$ must be open and dense in the irreducible component $ \mathcal{C} $.
This completes the proof of part {\rm (a)}. 

\noindent
{\rm (b)}
Let $(L_i,z_i)\in\mathfrak{O}_i^K$. Denote $H=\{h\in K:h(L_i,z_i)=(L_i,z_i)\}$, which is a connected subgroup of $K$
(it is connected since it is open in the linear space $\{u\in\mathcal{L}(V_1)\times\mathcal{L}(V_2): u(L_i)\subset L_i,\ [u,z_i]=0\}$). We choose a partition into $H$-stable locally closed subsets
\[\mathcal{O}_\lambda^G\cap(z_i+\mathfrak{n}_{L_i})=C_1\sqcup\ldots\sqcup C_\ell\]
such that the irreducible components of $\mathcal{O}_\lambda^G\cap(z_i+\mathfrak{n}_{L_i})$ are exactly the closures of the subsets $C_j$ for $j\in\{1,\ldots,\ell\}$.
In view of (\ref{18-new}), we derive the partition
\[\mathcal{Z}_i^\lambda=\mathcal{D}_1\sqcup\ldots\sqcup\mathcal{D}_\ell\quad\mbox{where}\quad
\mathcal{D}_j:=\{h(L_i,x):h\in K,\ x\in C_j\}\ \mbox{ for all $j$},\]
the pairwise disjointness of the $\mathcal{D}_j$'s being shown as follows: the condition $\mathcal{D}_j\cap\mathcal{D}_{j'}\not=\emptyset$ yields $h\in K$, $x\in C_j$, and $x'\in C_{j'}$ such that $h(L_i,x)=(L_i,x')$; whence $h\in H$ (since $h z_i=(h x)^{-\theta}=x'^{-\theta}=z_i$), $x'=h x\in C_j\cap C_{j'}$ (since $C_j$ is $H$-stable), and so $j=j'$.
Each subset $\mathcal{D}_j$ is a constructible subset of $\mathcal{Z}_i^\lambda$ whose closure is irreducible.
Whence a clearly well-defined and bijective map
\begin{equation}
\label{19-bis}
\big\{j\in\{1,\ldots,\ell\}:\dim\mathcal{D}_j=\dim\mathcal{Z}_i^\lambda\big\}\to\mathrm{Irr}_{\mathrm{max}}(\mathcal{Z}_i^\lambda),\ j\mapsto\overline{\mathcal{D}_j}\cap\mathcal{Z}_i^\lambda.
\end{equation}

For $j\in\{1,\ldots,\ell\}$, fix an element $x_j\in C_j$ and let
\[G_j:=\{g\in G:gx_j\in C_j\}.\]
Note that $G_j$ is the inverse image of $C_j$ by the map $G\to\mathcal{O}_{\lambda}^G$, $g\mapsto gx_j$. The latter map is a locally trivial fiber bundle of (connected) fiber $Z_G(x_j):=\{g\in G:gx_j=x_j\}$ (see \cite[\S2.6]{Popov.Vinberg.1994}). Hence $G_j$ is an irreducible locally closed subset of $G$.
In addition we consider the Spaltenstein variety $\mathcal{F}_{x_j,\underline{d}}$ and we can find a partition
\[\mathcal{F}_{x_j,\underline{d}}=\mathcal{G}_1\sqcup\ldots\sqcup\mathcal{G}_s\]
into $Z_G(x_j)$-stable locally closed subsets such that the closures $\overline{\mathcal{G}_t}$
are exactly the components of $\mathcal{F}_{x_j,\underline{d}}$. 
For every pair $(j,t)\in\{1,\ldots,\ell\}\times\{1,\ldots,s\}$, we set
\[\mathcal{Y}_{j,t}=\{h(gW_\bullet,L_i,gx_j):h\in K,\ g\in G_j,\ W_\bullet\in\mathcal{G}_t\}.\]
Then $\mathcal{Y}_{j,t}$ is a constructible subset of $\phi^{-1}(\mathcal{D}_j)$ whose closure is irreducible. Note that
\[\phi^{-1}(\mathcal{D}_j)=\mathcal{Y}_{j,1}\sqcup\ldots\sqcup\mathcal{Y}_{j,\ell}\]
where the disjointness of the union is verified as follows: if $\mathcal{Y}_{j,t}\cap\mathcal{Y}_{j,t'}\not=\emptyset$, then we find in particular $g\in G$, $W_\bullet\in\mathcal{G}_t$, and $W'_\bullet\in\mathcal{G}_{t'}$ such that $(gW_\bullet,gx_j)=(W'_\bullet,x_j)$; hence $g\in Z_G(x_j)$, $W'_\bullet=gW_\bullet\in\mathcal{G}_t\cap\mathcal{G}_{t'}$ (since $\mathcal{G}_t$ is $Z_G(x_j)$-stable), and finally
$t=t'$. We obtain
\begin{equation}
\label{19-ter}
\overline{\phi^{-1}(\mathcal{D}_j)}=\overline{\mathcal{Y}_{j,1}}\cup\ldots\cup\overline{\mathcal{Y}_{j,\ell}}\quad\mbox{and}\quad
\overline{\phi^{-1}(\mathcal{Z}_i^\lambda)}=\bigcup_{j=1}^\ell\bigcup_{t=1}^s\overline{\mathcal{Y}_{j,t}}.
\end{equation}
The $\overline{\mathcal{Y}_{j,t}}$'s are irreducible closed subsets of $\overline{\phi^{-1}(\mathcal{Z}_i^\lambda)}$.
Moreover the restriction
$\phi|_{\mathcal{Y}_{j,t}}:\mathcal{Y}_{j,t}\to\mathcal{D}_j$
has constant fiber (isomorphic to) $\mathcal{G}_t$.
Since $(\mathfrak{O}_i^K,\lambda)$ is a good pair, this yields
\[\dim\overline{\mathcal{Y}_{j,t}}=\dim \mathcal{D}_j+\dim\mathcal{G}_t=\dim\mathcal{D}_j+\dim\mathcal{F}_{\lambda,\underline{d}}
\leq\dim\mathcal{Z}_i^\lambda+\dim\mathcal{F}_{\lambda,\underline{d}}=\dim \mathcal{Y}.\]
Therefore, $\overline{\mathcal{Y}_{j,t}}$ is an irreducible component of $\mathcal{Y}$
(or, equivalently, a component of $\overline{\phi^{-1}(\mathcal{Z}_i^\lambda)}$ of maximal dimension) if and only if $\dim\mathcal{D}_j=\dim\mathcal{Z}_i^\lambda$.
Whence the map
\begin{equation}
\label{bijective-map-2}
\big\{j\in\{1,\ldots,\ell\}:\dim\mathcal{D}_j=\dim\mathcal{Z}_i^\lambda\big\}\times\{1,\ldots,s\}\to\mathrm{Irr}_{\mathrm{max}}(\overline{\phi^{-1}(\mathcal{Z}_i^\lambda})),\ (j,t)\mapsto\overline{\mathcal{Y}_{j,t}}
\end{equation}
is well defined. This map is surjective (by (\ref{19-ter}))
and injective (since $\mathcal{Y}_{j,t}$ and $\mathcal{Y}_{j',t'}$ are disjoint whenever $(j,t)\not=(j',t')$ and contain dense open subsets of $\overline{\mathcal{Y}_{j,t}}$ and $\overline{\mathcal{Y}_{j',t'}}$, respectively).
Comparing the bijections of (\ref{19-bis}) and (\ref{bijective-map-2}) achieves the proof of part {\rm (b)}.

\noindent
{\rm (c)} In the case where $\mathcal{O}_\lambda^G\cap(z_i+\mathfrak{n}_{L_i})$ is irreducible, we have $\ell=1$. Hence (\ref{19-ter}) becomes
\[\overline{\phi^{-1}(\mathcal{Z}_i^\lambda)}=\overline{\mathcal{Y}_{1,1}}\cup\ldots\cup \overline{\mathcal{Y}_{1,s}}.\]
Since $\overline{\mathcal{Y}_{1,1}},\ldots,\overline{\mathcal{Y}_{1,s}}$ are all of the same dimension, we get that $\overline{\phi^{-1}(\mathcal{Z}_i^\lambda)}$ is equidimensional and $\mathrm{Irr}(\overline{\phi^{-1}(\mathcal{Z}_i^\lambda)})=\{\overline{\mathcal{Y}_{1,1}},\ldots,\overline{\mathcal{Y}_{1,s}}\}\cong\mathrm{Irr}(\mathcal{F}_{\lambda,\underline{d}})$.
\end{proof}

\subsection{Outline of the proof of Theorem \ref{theorem-2}}\label{section-5-2}

In the rest of the section, we consider the situation in Theorem~\ref{theorem-2}; 
namely, in addition to assuming $Q$ mirabolic (as in the previous subsection), we assume that the parabolic subgroup $P\subset G$ is maximal, of the form $P=P_{(k,n-k)}$. 
In this special case, we consider the factorization of $\pi:\mathcal{Y}\to\mathfrak{E}$ through the two maps
\[\mathcal{Y}\stackrel{\phi}{\longrightarrow}\mathcal{Z}\stackrel{\psi}{\longrightarrow}\mathfrak{E},\]
as explained in the previous subsection.  
The exotic nullcone $\mathfrak{E}$, 
which is the image $ \pi(\mathcal{Y}) $ by definition, 
coincides with the one introduced in Notation \ref{notation-1-5}\,{\rm (c)}, 
i.e., $\mathfrak{E}=\mathbb{P}(V_1)\times\mathcal{N}(\mathfrak{s})_2^k$.

By Corollary \ref{corollary-2-6} the $K$-orbits of $\mathfrak{E}$ are of the form $\mathfrak{O}_{((r,s),\mu)}^K$ and they are parametrized by the pairs $((r,s),\mu)\in\Pi_2^k$.
In particular for $(L,z)\in\mathfrak{O}_{((r,s),\mu)}^K$ we have $z^2=0$ and $\rk z=r+s$.
Set $\mathcal{Z}_{((r,s),\mu)}=\psi^{-1}(\mathfrak{O}_{((r,s),\mu)}^K)$, 
and for every partition $\lambda\vdash n$,  
we denote 
\begin{equation*}
\mathcal{Z}^\lambda_{((r,s),\mu)}=\{(L,x)\in\mathcal{Z}_{((r,s),\mu)}:x\in\mathcal{O}_{\lambda}^G\} .
\end{equation*}
We restrict our attention to partitions of the form $\lambda=(n-\ell,\ell)^*$ with $\ell\leq\min\{k,n-k\}$ (otherwise $\mathcal{O}_{\lambda}^G$ is not contained in the closure of the Richardson orbit $\mathcal{O}_P^G$, thence $\mathcal{Z}^\lambda_{((r,s),\mu)}$ is empty).

In Section \ref{section-5-3}, we will show the following property:
\begin{equation}
\label{20}
\mbox{For every $(L,z)\in\mathfrak{E}$, the set $\mathcal{O}_{\lambda}^G\cap(z+\mathfrak{n}_L)$ is always empty or irreducible.}
\end{equation}
This fact together with (\ref{18-new}) implies that $\mathcal{Z}_{((r,s),\mu)}^\lambda$ is empty or irreducible for every pair $(\mathfrak{O}_{((r,s),\mu)}^K,\lambda)$. Moreover, if $\mathcal{Z}_{((r,s),\mu)}^\lambda$ is nonempty, then Proposition \ref{proposition-2} implies that
\begin{equation}
\label{21}
\mbox{the Spaltenstein variety $\mathcal{F}_{\lambda,(k,n-k)}$ is irreducible}.
\end{equation}
Comparing (\ref{20}) and (\ref{21}) with Proposition \ref{proposition-5-1}\,{\rm (c)}, we obtain:
\begin{equation}
\label{22}
\parbox{14cm}{The irreducible components of $\mathcal{Y}$ are exactly the subsets $\overline{\phi^{-1}(\mathcal{Z}_{((r,s),\mu)}^\lambda)}$ for the good pairs $(\mathfrak{O}_{((r,s),\mu)}^K,\lambda)$.}
\end{equation}
Therefore, in order to complete the proof of Theorem \ref{theorem-2}, it suffices to show the following facts:
\begin{equation}
\label{23}
\parbox{14cm}{If $((r,s),\mu)$ is of type (I), $\mbox{(II)}^0$, or (III), then there is a unique partition $\lambda\vdash n$ such that the pair $(\mathfrak{O}_{((r,s),\mu)}^K,\lambda)$ is good.}
\end{equation}
\begin{equation}
\label{24}
\parbox{14cm}{If $((r,s),\mu)$ is of type $\mbox{(II)}^*$, then there are exactly two partitions $\lambda,\lambda'\vdash n$ such that the pairs $(\mathfrak{O}_{((r,s),\mu)}^K,\lambda)$ and $(\mathfrak{O}_{((r,s),\mu)}^K,\lambda')$ are good.}
\end{equation}
The proof of (\ref{23}) and (\ref{24}) will be carried out in Section \ref{section-5-4}.

\subsection{Fibers of the map $\psi$}
\label{section-5-3}

The purpose of this subsection is to show property (\ref{20}). So we fix a pair 
$(L,z)\in\mathfrak{E}=\mathbb{P}(V_1)\times\mathcal{N}(\mathfrak{s})_2^k$.
Write
\begin{equation}\label{eq:L.and.z}
L=\langle v\rangle_{\mathbb{C}}\quad\mbox{and}\quad z=\left(\begin{array}{cc}
0 & a \\ b & 0
\end{array}\right)
\end{equation}
with a nonzero $v\in M_{p,1}(\mathbb{C})\cong V_1$, $a\in M_{p,q}(\mathbb{C})$, and $b\in M_{q,p}(\mathbb{C})$. Our goal is to study the fiber $\psi^{-1}((L,z))$
and the intersections $\mathcal{O}_{\lambda}^G\cap (z+\mathfrak{n}_L)$, and the next lemma is a key.

\begin{lemma}
\label{lemma-3}
Let $ z \in \mathcal{N}(\mathfrak{s})_2^k $ be as in \eqref{eq:L.and.z}, and set 
\[\Phi(z):=\{x\in\mathcal{N}(\mathfrak{g}):x^2=0,\ x^{-\theta}=z,\ x^\theta\in\mathfrak{n}_L \}.\]
{\rm (a)} If $bv\not=0$, then $\Phi(z)=\{z\}$. \\
{\rm (b)} If $bv=0$, then
\[\Phi(z)=\left\{x(u)=\left(\begin{array}{cc}
v\cdot{}^tu & a \\ b & 0
\end{array}\right): u\in\ker{}^ta,\ {}^tu\cdot v=0  \right\}.\]
Moreover, in this case, denoting $\ell:=\rk z=\rk a+\rk b\in\{0,\ldots,\min\{p,q,k,n-k\}\}$,
\begin{itemize}
\item[\rm (i)] if $v\in\mathrm{Im}\,a$, then $\Phi(z)\subset\mathcal{O}^G_{(n-\ell,\ell)^*}$ and $\dim \Phi(z)=p-\rk a$.
\item[\rm (ii)] If $v\notin\mathrm{Im}\,a$, then
\[\Phi(z)=\big(\Phi(z)\cap\mathcal{O}^G_{(n-\ell,\ell)^*}\big)\cup\big(\Phi(z)\cap\mathcal{O}^G_{(n-\ell-1,\ell+1)^*}\big)\]
and we have
\begin{eqnarray*}
 & \Phi(z)\cap\mathcal{O}^G_{(n-\ell,\ell)^*}=\{x(u):u\in\mathrm{Im}\,{}^tb\} ,  \\
 & \dim \bigl( \Phi(z)\cap\mathcal{O}^G_{(n-\ell,\ell)^*} \bigr) =\rk b,
\end{eqnarray*}
and
\[\Phi(z)\cap\mathcal{O}^G_{(n-\ell-1,\ell+1)^*}=\{x(u):u\in\ker{}^ta,\ {}^tu\cdot v=0,\ u\notin\mathrm{Im}\,{}^tb\},\]
which is nonempty if and only if $\ell<p-1$, in which case we have
\[\dim \bigl( \Phi(z)\cap\mathcal{O}^G_{(n-\ell-1,\ell+1)^*} \bigr) =p-\rk a-1.\]
\end{itemize}
{\rm (c)} In particular, in all cases, for every partition $\lambda\vdash n$, the intersection $\Phi(z)\cap\mathcal{O}^G_\lambda$ is irreducible or empty.
\end{lemma}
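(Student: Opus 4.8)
The plan is to prove the lemma in the order the cases are stated, since part (c) is essentially a bookkeeping consequence of parts (a) and (b). So first I would settle the structure of $\Phi(z)$. Write $x \in \Phi(z)$ as $x = x^\theta + z$ with $x^\theta \in \mathfrak{n}_L$. Since $\mathfrak{n}_L = \{w \in \mathfrak{k} : \mathrm{Im}\,w \subset L \subset \ker w\}$ and $L = \langle v\rangle$ lies in $V_1$, any such $w$ has the block form $\mathrm{diag}(v\cdot{}^tu,\,0)$ for some $u \in M_{p,1}(\mathbb{C})$ with ${}^tu\cdot v = 0$ (the condition $L \subset \ker w$ forces the second block to vanish and forces ${}^tu \cdot v = 0$, the condition $\mathrm{Im}\,w \subset L$ gives the rank-one shape on $V_1$). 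Thus every candidate has the form $x(u)$ displayed in (b). Now impose $x(u)^2 = 0$: using $z^2 = 0$ (equivalently $ab = 0$, $ba = 0$) and $(v\,{}^tu)^2 = ({}^tu\cdot v)\,v\,{}^tu = 0$, expand $x(u)^2$ blockwise. The cross terms are $v\,{}^tu\,a$ in the $(1,2)$ block and $b\,v\,{}^tu$ in the $(2,1)$ block (the diagonal blocks vanish automatically). So $x(u)^2 = 0$ iff $v\,{}^tu\,a = 0$ and $b\,v\,{}^tu = 0$, i.e.\ iff ${}^tu\,a = 0$ (when $v\neq 0$, so $u \in \ker{}^ta$) and $bv = 0$ (since then $b\,v = 0$ kills the other term regardless of $u$, while if $bv \neq 0$ we are forced to $u = 0$). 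This already gives (a): if $bv \neq 0$ then $u = 0$ and $\Phi(z) = \{x(0)\} = \{z\}$; and it gives the description of $\Phi(z)$ in (b) as $\{x(u) : u \in \ker{}^ta,\ {}^tu\cdot v = 0\}$.

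Next, under $bv = 0$, I would compute the Jordan type of $x(u)$, which by hypothesis is $2$-step nilpotent, so the type is $(n-m,m)^*$ where $m = \mathrm{rk}\,x(u)$. So everything reduces to computing $\mathrm{rk}\,x(u)$ as $u$ ranges over the affine-linear set $\{u \in \ker{}^ta : {}^tu\cdot v = 0\}$. This is exactly the content of Lemma~\ref{lemma-4-4}(b): $\mathrm{rk}\,x(u) \in \{\ell, \ell+1\}$ where $\ell = \mathrm{rk}\,z = \mathrm{rk}\,a + \mathrm{rk}\,b$, with equality $\mathrm{rk}\,x(u) = \ell$ iff ($v \in \mathrm{Im}\,a$ or $u \in \mathrm{Im}\,{}^tb$). (Here I should double-check that the hypothesis $u \in \ker{}^ta$ is consistent with Lemma~\ref{lemma-4-4}; in that lemma $x$ is a general element of $\mathcal{Y}$, and indeed $x^2 = 0$ there forces, via $\eta a = v\,{}^tu\,a = 0$, that ${}^tu\,a = 0$, matching our constraint.) So in case (i), $v \in \mathrm{Im}\,a$: then $\mathrm{rk}\,x(u) = \ell$ for all admissible $u$, hence $\Phi(z) \subset \mathcal{O}^G_{(n-\ell,\ell)^*}$; and $\Phi(z)$ is parametrized linearly by $\{u \in \ker{}^ta : {}^tu \cdot v = 0\}$, a linear space of dimension $(p - \mathrm{rk}\,a) - 1$... wait, I need to recheck: $\ker{}^ta \subset M_{p,1}$ has dimension $p - \mathrm{rk}\,a$, and the extra condition ${}^tu\cdot v = 0$ cuts it down by one provided $v \notin (\ker{}^ta)^\perp = \mathrm{Im}\,a$ — but in case (i) $v \in \mathrm{Im}\,a$, so the functional $u \mapsto {}^tu\cdot v$ vanishes identically on $\ker{}^ta$, hence no codimension drop and $\dim\Phi(z) = p - \mathrm{rk}\,a$, matching the claim. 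Good — that is a subtle point worth stating carefully.

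In case (ii), $v \notin \mathrm{Im}\,a$: then $\mathrm{Im}\,{}^tb = (\ker b)^\perp$ and... the decomposition $\Phi(z) = (\Phi(z)\cap\mathcal{O}^G_{(n-\ell,\ell)^*}) \cup (\Phi(z)\cap\mathcal{O}^G_{(n-\ell-1,\ell+1)^*})$ follows from Lemma~\ref{lemma-4-4}(b): the first piece is $\{x(u) : u \in \mathrm{Im}\,{}^tb,\ u \in \ker{}^ta,\ {}^tu\cdot v = 0\}$, but I would check that $\mathrm{Im}\,{}^tb \subset \ker{}^ta$ (from $ba = 0$, so ${}^ta\,{}^tb = 0$, i.e.\ $\mathrm{Im}\,{}^tb \subset \ker{}^ta$, yes) and that $\mathrm{Im}\,{}^tb \subset \ker({}^t\!v\cdot) $ — this needs $bv = 0$, which gives ${}^t\!v\,{}^tb = 0$, i.e.\ $\mathrm{Im}\,{}^tb \perp v$, yes. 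So the first piece is just $\{x(u) : u \in \mathrm{Im}\,{}^tb\}$, linearly parametrized by $\mathrm{Im}\,{}^tb$ of dimension $\mathrm{rk}\,b$, hence irreducible of dimension $\mathrm{rk}\,b$. The second piece $\{x(u) : u \in \ker{}^ta,\ {}^tu\cdot v = 0,\ u \notin \mathrm{Im}\,{}^tb\}$ is the complement of a proper linear subspace inside the linear space $\{u \in \ker{}^ta : {}^tu\cdot v = 0\}$ (now of dimension $(p - \mathrm{rk}\,a) - 1$ since $v \notin \mathrm{Im}\,a$ makes the functional nontrivial), so it is nonempty iff $\mathrm{Im}\,{}^tb$ is properly contained in it, i.e.\ iff $\mathrm{rk}\,b < p - \mathrm{rk}\,a - 1$, i.e.\ $\ell < p - 1$; and when nonempty it is open dense in that linear space, hence irreducible of dimension $p - \mathrm{rk}\,a - 1$. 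Finally (c): in every case $\Phi(z)\cap\mathcal{O}^G_\lambda$ is either empty, a single point, a linear space, or a dense open subset of a linear space — in all cases irreducible or empty. The main obstacle I anticipate is keeping the several linear-algebra sub-conditions (the interplay of $u \in \ker{}^ta$, ${}^tu\cdot v = 0$, $u \in$ or $\notin \mathrm{Im}\,{}^tb$, and $v \in$ or $\notin \mathrm{Im}\,a$) straight and correctly computing when the functional $u \mapsto {}^tu\cdot v$ is or is not identically zero on $\ker{}^ta$, since that controls the dimension count in case (i) versus case (ii).
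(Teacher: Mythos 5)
Your proposal is correct and follows essentially the same route as the paper: derive the block form $x(u)$ from the condition $x^\theta\in\mathfrak{n}_L$, impose $x(u)^2=0$ using $ab=ba=0$ to obtain the constraints $u\in\ker{}^ta$ and ($bv=0$ or $u=0$), and then invoke Lemma~\ref{lemma-4-4}\,{\rm (b)} for the rank dichotomy; the dimension counts you carry out (in particular the observation that the functional $u\mapsto{}^tu\cdot v$ vanishes identically on $\ker{}^ta$ exactly when $v\in\mathrm{Im}\,a$) are precisely the details the paper leaves to the reader with the phrase ``easily follows.'' The only quibble is a misattribution in your parenthetical: it is the condition $\mathrm{Im}\,w\subset L\subset V_1$ (not $L\subset\ker w$) that forces the $V_2$-block of $w\in\mathfrak{n}_L$ to vanish, though your resulting characterization of $\mathfrak{n}_L$ is correct.
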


\begin{proof}
By assumption, we have $z^2=0$, hence 
\begin{equation}
\label{25} ab=0\quad\mbox{and}\quad ba=0.
\end{equation}
By definition, an element $x$ belongs to $\Phi(z)$ if and only if it is of the form
\[x=\left(\begin{array}{cc}
\eta & a \\ b & 0
\end{array}\right)\]
and satisfies $x^2=0$ and $\mathrm{Im}\,\eta\subset L=\langle v\rangle_\mathbb{C}\subset \ker \eta$.
The latter condition forces that $\eta=v\cdot{}^tu$ with $u\in M_{p,1}(\mathbb{C})$ such that ${}^tu\cdot v=0$. 
Thus, $\eta^2=0$. Note that
\[x^2=\left(\begin{array}{cc}
\eta^2+ab & \eta a \\ b\eta & ba
\end{array}\right)=\left(\begin{array}{cc}
0 & \eta a \\ b\eta & 0
\end{array}\right)\]
(by (\ref{25})).
Thereby, the condition $x^2=0$ is equivalent to
\[\eta a=0\quad\mbox{and}\quad b\eta=0,\]
which is equivalent to
\[u\in\ker{}^ta\quad\mbox{and}\quad(v\in\ker b\quad\mbox{or}\quad u=0).\]
The lemma easily follows from these observations and from Lemma \ref{lemma-4-4}.
\end{proof}

The fiber $\psi^{-1}((L,z))$ is determined in the following proposition.

\begin{proposition}
\label{proposition-4}
Let $(L,z)=(\langle v\rangle_\mathbb{C},\left(\begin{smallmatrix}
0 & a \\ b & 0
\end{smallmatrix}\right))\in \mathfrak{E}=\mathbb{P}(V_1)\times\mathcal{N}(\mathfrak{s})_2^k$ as before and denote $\ell=\rk z\in\{0,\ldots,\min\{p,q,k,n-k\}\}$. \\
{\rm (a)} Assume that $\ell=\min\{k,n-k\}$ or $(\ell<\min\{k,n-k\}$ and $(v\notin\ker b$ or $v\in\mathrm{Im}\,a$ or $\ell\geq p-1))$. 
Then $\psi^{-1}((L,z))\subset \mathbb{P}(V_1)\times\mathcal{O}^G_{(n-\ell,\ell)^*}$. Moreover, $\psi^{-1}((L,z))$ is an affine space of dimension
\[\left\{\begin{array}{ll}
0 & \mbox{if $v\notin\ker b$,} \\
p-\rk a & \mbox{if $v\in\mathrm{Im}\,a(\subset\ker b)$,} \\
\rk b & \mbox{if $v\in\ker b\setminus\mathrm{Im}\,a$.}
\end{array}\right.\]
{\rm (b)} Let $\ell<\min\{k,n-k\}$ and assume that $v\in\ker b\setminus\mathrm{Im}\,a$ and $\ell<p-1$.
Then, the fiber $\psi^{-1}((L,z))$
decomposes as
\[\psi^{-1}((L,z))=\left(\psi^{-1}((L,z))\cap (\mathbb{P}(V_1)\times \mathcal{O}^G_{(n-\ell,\ell)^*})\right)\sqcup\left(\psi^{-1}((L,z))\cap (\mathbb{P}(V_1)\times \mathcal{O}^G_{(n-\ell-1,\ell+1)^*})\right) ; \]
both subsets in this union are irreducible; 
the first one is an affine space, and the second one is open and dense in $\psi^{-1}((L,z))$, and we have
\begin{eqnarray*}
& \dim \bigl( \psi^{-1}((L,z))\cap (\mathbb{P}(V_1)\times \mathcal{O}^G_{(n-\ell,\ell)^*}) \bigr) =\rk b,\\
& \dim \bigl( \psi^{-1}((L,z))\cap (\mathbb{P}(V_1)\times \mathcal{O}^G_{(n-\ell-1,\ell+1)^*}) \bigr) =p-\rk a-1.
\end{eqnarray*}
\end{proposition}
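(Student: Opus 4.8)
The plan is to deduce the proposition from Lemma~\ref{lemma-3} combined with the rank estimate of Lemma~\ref{lemma-4-4}(b). First I would unwind the definition of $\psi$: since $P=P_{(k,n-k)}$ is maximal, $\overline{\mathcal{O}_P^G}=\{x\in\mathcal{L}(V):x^2=0,\ \rk x\leq\min\{k,n-k\}\}$ (see the Example in Section~\ref{section-3-2}), so a pair $(L',x)$ lies in $\psi^{-1}((L,z))$ precisely when $L'=L$, $x^{-\theta}=z$, $x^\theta\in\mathfrak{n}_L$, $x^2=0$, and $\rk x\leq\min\{k,n-k\}$. Hence
\[
\psi^{-1}((L,z))=\{L\}\times\{x\in\Phi(z):\rk x\leq\min\{k,n-k\}\},
\]
where $\Phi(z)$ is the set studied in Lemma~\ref{lemma-3}. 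By Lemma~\ref{lemma-4-4}(b) every $x\in\Phi(z)$ satisfies $\rk x\in\{\ell,\ell+1\}$, so the rank cutoff is vacuous when $\ell<\min\{k,n-k\}$, and when $\ell=\min\{k,n-k\}$ it simply removes the rank-$(\ell+1)$ elements, leaving $\Phi(z)\cap\mathcal{O}^G_{(n-\ell,\ell)^*}$.

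Next I would run through the cases of Lemma~\ref{lemma-3}, recording in each that the parametrization $u\mapsto x(u)$ is injective (because $v\neq 0$) and affine-linear in $u$, so that the image of a linear subspace of $\{u:u\in\ker{}^ta,\ {}^tu\cdot v=0\}$ is an affine space of the corresponding dimension. If $bv\neq 0$ then $\Phi(z)=\{z\}$ with $\rk z=\ell\leq\min\{k,n-k\}$, giving the one-point fiber inside $\mathbb{P}(V_1)\times\mathcal{O}^G_{(n-\ell,\ell)^*}$. If $bv=0$ and $v\in\mathrm{Im}\,a$, all of $\Phi(z)$ has rank $\ell$ and is kept, so the fiber is an affine space of dimension $p-\rk a$ inside $\mathbb{P}(V_1)\times\mathcal{O}^G_{(n-\ell,\ell)^*}$. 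If $bv=0$ and $v\notin\mathrm{Im}\,a$ with $\ell\geq p-1$, then $\ell=p-1$ is forced (since $\mathrm{Im}\,{}^tb\subseteq\ker{}^ta\cap(v)^\perp$ gives $\rk b\leq p-\rk a-1$), the rank-$(\ell+1)$ stratum of Lemma~\ref{lemma-3}(b)(ii) is empty, and $\Phi(z)=\{x(u):u\in\mathrm{Im}\,{}^tb\}$ is an affine space of dimension $\rk b$. Finally, if $bv=0$, $v\notin\mathrm{Im}\,a$, $\ell<p-1$, and $\ell=\min\{k,n-k\}$, the cutoff keeps exactly $\Phi(z)\cap\mathcal{O}^G_{(n-\ell,\ell)^*}=\{x(u):u\in\mathrm{Im}\,{}^tb\}$, again an affine space of dimension $\rk b$. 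These exhaust the hypotheses of part~(a).

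For part~(b), where $\ell<\min\{k,n-k\}$, $v\in\ker b\setminus\mathrm{Im}\,a$, and $\ell<p-1$, the cutoff is vacuous, so $\psi^{-1}((L,z))=\{L\}\times\Phi(z)$; by Lemma~\ref{lemma-3}(b)(ii), $\Phi(z)$ is the affine space $\{x(u):u\in\ker{}^ta\cap(v)^\perp\}$ of dimension $p-\rk a-1$, its rank-$\ell$ stratum $\Phi(z)\cap\mathcal{O}^G_{(n-\ell,\ell)^*}=\{x(u):u\in\mathrm{Im}\,{}^tb\}$ is an affine subspace of dimension $\rk b$, and its rank-$(\ell+1)$ stratum is the complementary Zariski-open subset, of dimension $p-\rk a-1$. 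The numerical point that makes part~(b) work is $\rk b<p-\rk a-1$, which holds because $\rk a+\rk b=\ell<p-1$; this makes the rank-$\ell$ stratum a proper closed subset of the irreducible affine space $\Phi(z)$ and the rank-$(\ell+1)$ stratum a nonempty dense open subset, hence irreducible and dense in the whole fiber. The disjointness of the two strata and the irreducibility of the first are immediate. I expect this last dimension count, establishing density of the top stratum, to be the only step requiring genuine attention; the rest is bookkeeping of the case analysis and of the linear/affine structure of the strata coming from Lemma~\ref{lemma-3}.
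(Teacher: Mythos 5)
Your proof is correct and takes essentially the same approach as the paper's: the paper identifies $\psi^{-1}((L,z))$ with $\{(L,x):x\in\Phi(z),\ \rk x\leq\min\{k,n-k\}\}$ and declares the proposition a consequence of Lemma~\ref{lemma-3}, which is exactly your reduction. You merely spell out the case analysis, the injectivity and affine-linearity of $u\mapsto x(u)$, and the dimension comparisons (including the observation that $\ell\geq p-1$ forces $\ell=p-1$ and that $\rk b<p-\rk a-1$ gives density of the top stratum) that the paper leaves implicit.
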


\begin{proof}
From the fact that every element $(L,x)\in\psi^{-1}((L,z))$ satisfies $x\in \overline{\mathcal{O}_P^G}$, and thus $x^2=0$
(see Section \ref{section-5-1}), and from the definition of $\Phi(z)$, we see that
\[\psi^{-1}((L,z))=\{(L,x):x\in\Phi(z)\ \ \mbox{and}\ \ \rk x\leq\min\{k,n-k\}\}.\]
Then the proposition is a consequence of Lemma \ref{lemma-3}.
\end{proof}

\begin{corollary}
\label{corollary-5-5}
Condition (\ref{20}) of Section \ref{section-5-2} is fulfilled.
\end{corollary}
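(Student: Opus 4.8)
The plan is to deduce Corollary~\ref{corollary-5-5} from Lemma~\ref{lemma-3}\,(c). Recall from the beginning of Section~\ref{section-5-2} that the partitions $\lambda$ relevant to condition~(\ref{20}) are those of the form $\lambda=(n-\ell,\ell)^*$ with $\ell\le\min\{k,n-k\}$, so I would fix such a partition $\lambda$ together with a point $(L,z)\in\mathfrak{E}=\mathbb{P}(V_1)\times\mathcal{N}(\mathfrak{s})_2^k$, and then show that $\mathcal{O}_\lambda^G\cap(z+\mathfrak{n}_L)$ is empty or irreducible.

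The key step is to identify this set with the intersection $\Phi(z)\cap\mathcal{O}_\lambda^G$ studied in Lemma~\ref{lemma-3}. Two elementary observations suffice. First, since $z\in\mathfrak{s}$ we have $z^\theta=0$ and $z^{-\theta}=z$, so that (because $\mathfrak{n}_L\subset\mathfrak{k}$)
\[z+\mathfrak{n}_L=\{x\in\mathfrak{g}:x^{-\theta}=z\ \text{and}\ x^\theta\in\mathfrak{n}_L\}.\]
Second, recall that $\mathcal{O}^G_{(n-\ell,\ell)^*}=\{x\in\mathcal{L}(V):x^2=0\ \text{and}\ \rk x=\ell\}$, so every element of $\mathcal{O}_\lambda^G$ is two-step nilpotent. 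Combining these with the definition $\Phi(z)=\{x\in\mathcal{N}(\mathfrak{g}):x^2=0,\ x^{-\theta}=z,\ x^\theta\in\mathfrak{n}_L\}$, I obtain
\[\mathcal{O}_\lambda^G\cap(z+\mathfrak{n}_L)=\Phi(z)\cap\mathcal{O}_\lambda^G:\]
the inclusion $\supseteq$ is immediate from the definition of $\Phi(z)$; and for $\subseteq$, any $x$ in the left-hand side is nilpotent with $x^2=0$ (because $x\in\mathcal{O}_\lambda^G$) and satisfies $x^{-\theta}=z$ and $x^\theta\in\mathfrak{n}_L$ (because $x\in z+\mathfrak{n}_L$), hence belongs to $\Phi(z)$.

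It then remains only to invoke Lemma~\ref{lemma-3}\,(c), which asserts that $\Phi(z)\cap\mathcal{O}_\lambda^G$ is irreducible or empty for every partition; by the displayed identity the same holds for $\mathcal{O}_\lambda^G\cap(z+\mathfrak{n}_L)$, which is precisely condition~(\ref{20}). Alternatively, one may read the conclusion off Proposition~\ref{proposition-4}: under the bijection $x\mapsto(L,x)$ the set $\mathcal{O}_\lambda^G\cap(z+\mathfrak{n}_L)$ corresponds to the slice $\psi^{-1}((L,z))\cap(\mathbb{P}(V_1)\times\mathcal{O}_\lambda^G)$, which Proposition~\ref{proposition-4} describes either as an affine space or as a dense open subset of the irreducible fiber $\psi^{-1}((L,z))$. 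I expect no serious obstacle here, since all the geometric content is already packaged in Lemma~\ref{lemma-3} and Proposition~\ref{proposition-4}; the only point requiring care is the reduction step, which uses crucially that $\lambda$ has at most two columns, so that membership in $\mathcal{O}_\lambda^G$ forces $x^2=0$. This is exactly why it is enough to treat partitions of the form $(n-\ell,\ell)^*$, the remaining partitions being irrelevant to the proof of Theorem~\ref{theorem-2}.
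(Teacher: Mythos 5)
Your proposal is correct and follows essentially the same route as the paper: the paper's proof observes that $x\mapsto(L,x)$ identifies $\mathcal{O}_\lambda^G\cap(z+\mathfrak{n}_L)$ with $\psi^{-1}((L,z))\cap(\mathbb{P}(V_1)\times\mathcal{O}_\lambda^G)$ and then cites Proposition~\ref{proposition-4}, which is itself just Lemma~\ref{lemma-3} repackaged, so your direct identification with $\Phi(z)\cap\mathcal{O}_\lambda^G$ is the same argument. Your remark that the reduction uses the two-column shape of the relevant $\lambda$ (so that $x\in\mathcal{O}_\lambda^G$ forces $x^2=0$) is exactly the point that makes the identification valid.
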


\begin{proof}
Indeed, the map $\mathcal{O}_{\lambda}^G\cap(z+\mathfrak{n}_L)\to\psi^{-1}((L,z))\cap(\mathbb{P}(V_1)\times\mathcal{O}_{\lambda}^G)$, $x\mapsto (L,x)$ is an isomorphism of algebraic varieties. Hence the statement follows from Proposition \ref{proposition-4}.
\end{proof}

\subsection{Good pairs}
\label{section-5-4}
Let $((r,s),\mu)\in\Pi_2^k$, so $\ell:=r+s\leq\min\{p,q,k,n-k\}$.
By Proposition \ref{proposition-4}, we know that $\mathcal{Z}_{((r,s),\mu)}^\lambda=\emptyset$ unless \[\lambda=(n-\ell,\ell)^*\quad\mbox{or}\quad\big(\lambda=(n-\ell-1,\ell+1)^*\quad\mbox{and}\quad\ell<\min\{k,n-k\}\big).\]
It also follows from Proposition \ref{proposition-4} that the restriction
$\psi^\lambda:\mathcal{Z}_{((r,s),\mu)}^\lambda\to \mathfrak{O}_{((r,s),\mu)}^K$ of $\psi$
has a constant fiber. In the rest of the section, we fix an element
$(L,z)\in \mathfrak{O}_{((r,s),\mu)}^K$
with
\[z=\left(\begin{array}{cc}
0 & a \\ b & 0
\end{array}\right)
\quad\mbox{and}\quad
L=\langle v\rangle_\mathbb{C}\]
where $a\in M_{p,q}(\mathbb{C})$ is of rank $r$, $b\in M_{q,p}(\mathbb{C})$ is of rank $s$, and $v$ belongs to $\mathrm{Im}\,a$, $\ker b\setminus\mathrm{Im}\,a$, or $V_1\setminus\ker b$, depending on whether $((r,s),\mu)$ is of type (I), (II), or (III) (see Proposition \ref{proposition-2-5})
and we compute
the number
\begin{eqnarray*}
\delta_{((r,s),\mu)}^\lambda & := & \dim\mathcal{Z}^\lambda_{((r,s),\mu)}+\dim\mathcal{F}_{\lambda,(k,n-k)} \\
 & = & \dim\mathfrak{O}_{((r,s),\mu)}^K+\dim(\psi^\lambda)^{-1}((L,z))+\dim\mathcal{F}_{\lambda,(k,n-k)}.
\end{eqnarray*}
Note that
\begin{equation}
\label{26}
\mbox{the pair $(\mathfrak{O}_{((r,s),\mu)}^K,\lambda)$ is good if and only if $\delta_{((r,s),\mu)}^\lambda=\dim\mathcal{Y}$}
\end{equation}
(see Definition \ref{definition-good-pair}).
Note also that
\begin{equation}
\label{27-new}
\dim\mathcal{Y}=\dim \Gr_k(V)\times\mathbb{P}(V_1)=k(n-k)+p-1.
\end{equation}

\noindent
{\it Case 1: Assume $((r,s),\mu)$ of type (I).}

Thus Proposition \ref{proposition-2-5}\,{\rm (c)} yields $\dim\mathfrak{O}_{((r,s),\mu)}^K=\ell(n-\ell)+r-1$.
Moreover according to Proposition \ref{proposition-2-5}\,{\rm (b)} we have $v\in\mathrm{Im}\,a$. By Proposition \ref{proposition-4}\,{\rm (a)}, 
$\mathcal{Z}_{((r,s),\mu)}^{(n-\ell-1,\ell+1)^*}=\emptyset$, whereas
\[\delta_{((r,s),\mu)}^{(n-\ell,\ell)^*}=\big(\ell(n-\ell)+r-1\big)+\big(p-r\big)+(k-\ell)(n-k-\ell)=k(n-k)+p-1=\dim\mathcal{Y}\]
(using Propositions \ref{proposition-2}, \ref{proposition-4}, and relation (\ref{27-new})). We conclude (from (\ref{26})) that, for $((r,s),\mu)$ of type (I), the pair $(\mathfrak{O}_{((r,s),\mu)}^K,\lambda)$
 is good if and only if $\lambda=(n-\ell,\ell)^*$.
 
 \medskip
 \noindent
 {\it Case 2: Assume $((r,s),\mu)$ of type (III).}
 
 By Proposition \ref{proposition-2-5}\,{\rm (c)}, we have $\dim\mathfrak{O}_{((r,s),\mu)}^K=\ell(n-\ell)+p-1$. In addition Proposition \ref{proposition-2-5}\,{\rm (b)} yields $v\notin\ker b$. By Proposition \ref{proposition-4}\,{\rm (a)}, 
we have $\psi^{-1}((L,z))=\{(L,z)\}$, hence $\mathcal{Z}_{((r,s),\mu)}^{(n-\ell-1,\ell+1)^*}=\emptyset$ and
\[\delta_{((r,s),\mu)}^{(n-\ell,\ell)^*}=\big(\ell(n-\ell)+p-1\big)+0+(k-\ell)(n-k-\ell)=k(n-k)+p-1=\dim\mathcal{Y}.\]
Therefore, if $((r,s),\mu)$ is of type (III), 
the only good pair is $(\mathfrak{O}_{((r,s),\mu)}^K,(n-\ell,\ell)^*)$.

\medskip
\noindent
{\it Case 3: Assume $((r,s),\mu)$ of type (II).}

Proposition \ref{proposition-2-5}\,{\rm (c)} implies $\dim\mathfrak{O}_{((r,s),\mu)}^K=\ell(n-\ell)+p-s-1$.
In addition, by Proposition \ref{proposition-2-5}\,{\rm (b)} we have $v\in \ker b\setminus \mathrm{Im}\,a$.
By Proposition  \ref{proposition-4}\,{\rm (a)} and {\rm (b)} we see that $\dim (\psi^{(n-\ell,\ell)^*})^{-1}((L,z))=\rk b=s$. Hence
\[\delta_{((r,s),\mu)}^{(n-\ell,\ell)^*}=\big(\ell(n-\ell)+p-s-1\big)+s+(k-\ell)(n-k-\ell)=\dim\mathcal{Y}.\]
Therefore, the pair $(\mathfrak{O}_{((r,s),\mu)}^K,(n-\ell,\ell)^*)$ is good.

It remains to determine the nature of the pair $(\mathfrak{O}_{((r,s),\mu)}^K,\lambda)$ for $\lambda=(n-\ell-1,\ell+1)^*$.

First we note that if $\ell=\min\{k,n-k\}$ or $\ell\geq p-1$ (which may hold only if $((r,s),\mu)$ is of type $\mbox{(II)}^0$) 
then $\mathcal{Z}_{((r,s),\mu)}^\lambda=\emptyset$ (by Proposition \ref{proposition-4}\,{\rm (a)}). Thus the pair $(\mathfrak{O}_{((r,s),\mu)}^K,\lambda)$ is not good in that case.

Hereafter we assume that $\ell<\min\{k,n-k,p-1\}$. Invoking Proposition \ref{proposition-4}\,{\rm (b)} we see that
\begin{eqnarray*}
\delta_{((r,s),\mu)}^\lambda & = & \big(\ell(n-\ell)+p-s-1\big)+\big(p-r-1\big)+(k-\ell-1)(n-k-\ell-1) \\
 & = & 2p+\ell-n-1+k(n-k) \\
 & = & \dim\mathcal{Y}+(\ell-q).
\end{eqnarray*}
If $((r,s),\mu)$ is of type $\mbox{(II)}^0$, then $\ell<q$ (by Definition \ref{definition-2-9}, since the relations $\ell\leq p-2$ and $\ell<\min\{k,n-k\}$ are already satisfied): the pair $(\mathfrak{O}_{((r,s),\mu)}^K,\lambda)$ is not good in that case.
If $((r,s),\mu)$ is of type $\mbox{(II)}^*$, then we have in particular $\ell=q$, 
hence $\delta_{((r,s),\mu)}^\lambda = \dim \mathcal{Y}$: 
the pair $(\mathfrak{O}_{((r,s),\mu)}^K,\lambda)$ is good in that case.

\medskip
Combining Cases 1--3 we have shown:

\begin{proposition}
\label{proposition-5-6}
Let $ ((r, s), \mu) \in \Pi_2^k $ and set $ \ell = r + s $.  

\noindent
{\rm (a)} If $((r,s),\mu)$ is of type (I), $\mbox{(II)}^0$, or (III), then the pair $(\mathfrak{O}_{((r,s),\mu)}^K,\lambda)$ is good if and only if $\lambda=(n-\ell,\ell)^*$. \\
{\rm (b)} If $((r,s),\mu)$ is of type $\mbox{(II)}^*$, then the pair $(\mathfrak{O}_{((r,s),\mu)}^K,\lambda)$ is good if and only if $\lambda=(n-\ell,\ell)^*$ or $\lambda=(n-\ell-1,\ell+1)^*$.
\end{proposition}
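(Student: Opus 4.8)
The plan is to pin down, for each $K$-orbit $\mathfrak{O}=\mathfrak{O}^K_{((r,s),\mu)}$, the list of partitions $\lambda$ for which $(\mathfrak{O},\lambda)$ has any chance of being good, and then to settle goodness by the dimension count encoded in (\ref{26})--(\ref{27-new}): the pair is good exactly when $\mathcal{Z}^\lambda_{((r,s),\mu)}\ne\emptyset$ and $\delta^\lambda_{((r,s),\mu)}=\dim\mathcal{Y}=k(n-k)+p-1$, where $\delta^\lambda_{((r,s),\mu)}=\dim\mathfrak{O}+\dim(\psi^\lambda)^{-1}((L,z))+\dim\mathcal{F}_{\lambda,(k,n-k)}$. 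By Proposition~\ref{proposition-4} the space $\mathcal{Z}^\lambda_{((r,s),\mu)}$ is empty unless $\lambda=(n-\ell,\ell)^*$ (with $\ell=r+s$) or $\lambda=(n-\ell-1,\ell+1)^*$, and the second possibility can occur only when $((r,s),\mu)$ is of type (II) together with $\ell<\min\{k,n-k,p-1\}$; so only these two partitions need to be inspected.

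First I would treat $\lambda=(n-\ell,\ell)^*$. Here $\mathcal{F}_{\lambda,(k,n-k)}$ is a Grassmannian of dimension $(k-\ell)(n-k-\ell)$ by Proposition~\ref{proposition-2}, the fiber $(\psi^\lambda)^{-1}((L,z))$ has dimension $0$, $p-r$, or $s$ by Proposition~\ref{proposition-4}(a) according to whether we are in type (III) ($v\notin\ker b$), type (I) ($v\in\mathrm{Im}\,a$), or type (II) ($v\in\ker b\setminus\mathrm{Im}\,a$), and $\dim\mathfrak{O}=\ell(n-\ell)+\sum_i\lceil\mu_i/2\rceil-1$ by Proposition~\ref{proposition-2-5}(c); reading off $\mu$ from Definition~\ref{definition-2-1} one gets $\sum_i\lceil\mu_i/2\rceil=r$, $p-s$, and $p$ in the three types respectively. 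Plugging these into $\delta$ and using the identity $\ell(n-\ell)+(k-\ell)(n-k-\ell)=k(n-k)$, every case collapses to $\delta^{(n-\ell,\ell)^*}_{((r,s),\mu)}=k(n-k)+p-1=\dim\mathcal{Y}$; since $(L,z)\in\psi^{-1}((L,z))$ the space $\mathcal{Z}^{(n-\ell,\ell)^*}_{((r,s),\mu)}$ is nonempty, so this pair is always good. This already gives the ``if'' direction of (a) and (b).

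Next I would handle $\lambda=(n-\ell-1,\ell+1)^*$. In types (I) and (III), Proposition~\ref{proposition-4}(a) forces $\psi^{-1}((L,z))\subset\mathbb{P}(V_1)\times\mathcal{O}^G_{(n-\ell,\ell)^*}$, hence $\mathcal{Z}^\lambda_{((r,s),\mu)}=\emptyset$ and the pair is not good; this completes (a) for those types. In type (II): if $\ell\ge\min\{k,n-k\}$ or $\ell\ge p-1$ then again $\mathcal{Z}^\lambda_{((r,s),\mu)}=\emptyset$ by Proposition~\ref{proposition-4}(a); otherwise Proposition~\ref{proposition-4}(b) gives $\dim(\psi^\lambda)^{-1}((L,z))=p-r-1$ while $\dim\mathcal{F}_{\lambda,(k,n-k)}=(k-\ell-1)(n-k-\ell-1)$, and the same kind of computation (now via $\ell(n-\ell)+(k-\ell-1)(n-k-\ell-1)=k(n-k)+2\ell+1-n$ and $n=p+q$) yields $\delta^\lambda_{((r,s),\mu)}=\dim\mathcal{Y}+(\ell-q)$. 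Since $\ell=r+s\le\min\{p,q\}\le q$, the pair is good if and only if $\ell=q$, that is, if and only if simultaneously $r+s=q$, $\ell\le p-2$, and $\ell<\min\{k,n-k\}$ --- which is precisely condition (\ref{5}). Hence $(\mathfrak{O},(n-\ell-1,\ell+1)^*)$ is good exactly when $((r,s),\mu)$ is of type $\mbox{(II)}^*$, which both finishes the ``only if'' part of (a) for type $\mbox{(II)}^0$ and establishes (b).

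I expect the only delicate steps to be bookkeeping rather than conceptual: computing $\sum_i\lceil\mu_i/2\rceil$ correctly from $\mu$ in each type (the entries $-1$ and $0$ contributing $0$, the entry $2$ contributing $1$), keeping track of which branch of Proposition~\ref{proposition-4} applies, and carefully matching the three inequalities ``$\ell=q$, $\ell\le p-2$, $\ell<\min\{k,n-k\}$'' against Definition~\ref{definition-2-9}, which is what separates type $\mbox{(II)}^*$ from type $\mbox{(II)}^0$. No geometric input beyond Propositions~\ref{proposition-2}, \ref{proposition-2-5}, and~\ref{proposition-4} should be needed.
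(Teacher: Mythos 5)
Your proposal is correct and follows essentially the same route as the paper's Section~\ref{section-5-4}: restrict to the two candidate partitions via Proposition~\ref{proposition-4}, then test goodness by computing $\delta^\lambda_{((r,s),\mu)}$ against $\dim\mathcal{Y}=k(n-k)+p-1$ in each type, with the type (II) case for $(n-\ell-1,\ell+1)^*$ reducing to $\delta=\dim\mathcal{Y}+(\ell-q)$ and hence to condition (\ref{5}). The only cosmetic difference is that you derive $\dim\mathfrak{O}^K_{((r,s),\mu)}$ by evaluating $\sum_i\lceil\mu_i/2\rceil$ from Definition~\ref{definition-2-1} rather than quoting the three values directly, which amounts to the same computation.
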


Relations (\ref{23}) and (\ref{24}) are consequences of Proposition \ref{proposition-5-6}. As explained in Section \ref{section-5-2}, the proof of Theorem \ref{theorem-2} is now complete.

\section{On the combinatorial correspondence}

In this section, we show the results presented in Section \ref{section-2-3}.

\label{section-6}

\subsection{Proof of Proposition \ref{proposition-2-16}}\label{section-6-1}

We need to show the following three facts:
\begin{eqnarray}
\label{28} & \mbox{Every $(W,L)\in\Gr_k(V)\times\mathbb{P}(V_1)$ belongs to $\mathbb{O}_{(\tau,i)}$ for a unique $(\tau,i)\in\Theta_2^k$;} \\
\label{29} & \mbox{$\mathbb{O}_{(\tau,i)}$ is $K$-stable;} \\
\label{30} & \mbox{$\mathbb{O}_{(\tau,i)}$ is $K$-homogeneous.}
\end{eqnarray}

Condition (\ref{29}) easily follows from the definition of $\mathbb{O}_{(\tau,i)}$ and from the fact that $V_1$ and $V_2$ are stable by $K=\GL(V_1)\times\GL(V_2)$.

Let us check (\ref{28}). So let $(W,L)\in\Gr_k(V)\times\mathbb{P}(V_1)$. We set
\[n_1=\dim W\cap V_1,\qquad n_2=\dim W\cap V_2\quad\mbox{and}\quad \ell=k-n_1-n_2.\]
Clearly $n_1\leq p$, $n_2\leq q$, and $\ell\geq 0$ (since $n_1+n_2\leq \dim W=k$).
The natural inclusion 
$ W + V_2 \hookrightarrow V_1 \oplus V_2 $ induces an injection 
$ W / W \cap V_2 \hookrightarrow V_1 $, 
which implies $ k \leq p + n_2 $, whence $p-n_1-\ell\geq 0$.  
Similarly we get $q-n_2-\ell\geq 0$. 
Altogether these relations allow us to consider the $(1,2)$-tableau $\tau$ of shape $(p,q)$ and weight $k$, 
whose first column comprises 
$n_1$ entries $2$, $\ell$ entries $1$, and $ (p-n_1-\ell) $ entries $0$, 
and whose second column comprises 
$n_2$ entries $2$, $\ell$ entries $1$, and $ (q-n_2-\ell) $ entries $0$.
Set
\[i=\left\{\begin{array}{ll}
2 & \mbox{if $L\subset W$,} \\
1 & \mbox{if $L\subset W+V_2,\ L\not\subset W$,} \\
0 & \mbox{if $L\not\subset W+V_2$.}
\end{array}\right.\]
We claim that 
\begin{equation}
\label{31}
\mbox{the pair $(\tau,i)$ is an element of the set $\Theta_2^k$.}
\end{equation} 
If this is so, then the construction of $\tau$, $i$ and the definition of $\mathbb{O}_{(\tau,i)}$ guarantee that $(W,L)\in\mathbb{O}_{(\tau,i)}$, and that the pair $(\tau,i)$ is unique for this property. Hence (\ref{31}) is sufficient for completing the justification of (\ref{28}). For showing (\ref{31}), we just need to check that the number $i$ appears in the first column of $\tau$. We distinguish three cases.

\medskip
\noindent
{\it Case 1: $L\subset W$, that is, $i=2$.} 

In this case $L\subset W\cap V_1$, hence $W\cap V_1\not=0$. This yields $n_1\geq 1$, thus there is at least one label $2$ in the first column of $\tau$.

\medskip
\noindent
{\it Case 2: $L\subset W+V_2$ and $L\not\subset W$, that is, $i=1$.}

Then $(W+V_2)\cap V_1\not= W\cap V_1$. This implies that $W\not=(W\cap V_1)\oplus(W\cap V_2)$, whence $k>n_1+n_2$, and so $\ell\geq 1$. The latter inequality means that there is at least one label $1$ in the first column of $\tau$.

\medskip
\noindent
{\it Case 3: $L\not\subset W+V_2$, that is, $i=0$.}

We then have $V\not=W+V_2$, thus $p+q>\dim (W+V_2)=k+q-n_2$, so $p>k-n_2=\ell+n_1$. 
Thereby $p-n_1-\ell\geq 1$, which implies that the first column of $\tau$ contains at least one label $0$.

\medskip
In each case we conclude that $i$ appears in the first column of $\tau$, whence (\ref{31}) is valid. This completes the verification of (\ref{28}).

It remains to show (\ref{30}). Let $(W,L)\in\mathbb{O}_{(\tau,i)}$.
Let $n_1$ (resp., $n_2$) be the number of $2$'s in the first (resp., second) column of $\tau$ and set $\ell=k-n_1-n_2$, which is therefore the number of $1$'s in the first and in the second column of $\tau$.
Choose a basis $(e_1,\ldots,e_{n_1})$ of $W\cap V_1$ and a basis $(f_1,\ldots,f_{n_2})$ of $W\cap V_2$. Choose vectors $v_1,\ldots,v_\ell\in W$ which complete $(e_1,\ldots,e_{n_1},f_1,\ldots,f_{n_2})$ into a basis of $W$. For every $j\in\{1,\ldots,\ell\}$ we have $v_j=e_{n_1+j}+f_{n_2+j}$ for some $e_{n_1+j}\in V_1$, $f_{n_2+j}\in V_2$, and the vectors $e_1,\ldots,e_{n_1+\ell},f_1,\ldots,f_{n_2+\ell}$ are linearly independent. Choose vectors $e_{n_1+\ell+1},\ldots,e_p\in V_1$ and $f_{n_2+\ell+1},\ldots,f_q\in V_2$ which complete $(e_1,\ldots,e_{n_1+\ell})$ and $(f_1,\ldots,f_{n_2+\ell})$ into bases of $V_1$ and $V_2$.

Let $v\in V$ such that $L=\langle v\rangle_\mathbb{C}$.
In the case where $i=2$, we have $L\subset W$, hence we can choose $e_1=v$.
In the case where $i=1$, i.e., $L\subset W+V_2$ and $L\not\subset W$, we can choose $e_{n_1+1}=v$. Finally if $i=0$, i.e., $L\not\subset W+V_2$, we can take $e_p=v$.

Similarly, for a second element $(W',L')$ of $\mathbb{O}_{(\tau,i)}$, we can construct bases $(e'_1,\ldots,e'_p)$ and $(f'_1,\ldots,f'_q)$ of $V_1$ and $V_2$, respectively, such that 
\[W'=\langle e'_1,\ldots,e'_{n_1},f'_1,\ldots,f'_{n_2},e'_{n_1+1}+f'_{n_2+1},\ldots,e'_{n_1+\ell}+f'_{n_2+\ell}\rangle_\mathbb{C}\]
and $L'=\langle e'_1\rangle_\mathbb{C}$ if $i=2$, 
$L'=\langle e'_{n_1+1}\rangle_\mathbb{C}$ if $i=1$, and $L'=\langle e'_{p}\rangle_\mathbb{C}$ if $i=0$. Let $g\in K=\GL(V_1)\times\GL(V_2)$ be the automorphism such that $g(e_j)=e'_j$ for all $j\in\{1,\ldots,p\}$ and $g(f_j)=f'_j$ for all $j\in\{1,\ldots,q\}$. By construction we get $(W',L')=g(W,L)$. This shows (\ref{30}). The proof of Proposition \ref{proposition-2-16} is now complete.

\subsection{Proofs of Examples \ref{example-2-17} and \ref{example-2-18}}

\label{section-6-2}

Before discussing in detail the special situations of Examples \ref{example-2-17} and \ref{example-2-18},
let us describe the general technique for determining the surjection $\Phi:\Xfv/K\to \mathfrak{E}/K$ of Corollaries \ref{corollary-1} and \ref{corollary-2-11}:
\begin{itemize}
\item Let a $K$-orbit $\mathfrak{O}\subset\mathfrak{E}$ and let us determine $\Phi^{-1}(\mathfrak{O})$.
\item
Take any representative $(L,z)\in\mathfrak{O}(\subset\mathbb{P}(V_1)\times\mathcal{N}(\mathfrak{s})_2^k)$.
\item
Let a partition $\lambda\vdash n$ such that the pair $(\mathfrak{O},\lambda)$ is good (in the sense of Definition \ref{definition-good-pair} or according to the explicit description of Proposition \ref{proposition-5-6}).
\item
Take a generic element $x\in\mathcal{O}^G_\lambda$ such that $\mathrm{Im}\,x^\theta\subset L\subset\ker x^\theta$ and $x^{-\theta}=z$.
\item
Take a generic subspace $W\in\Gr_k(V)$ such that $\mathrm{Im}\,x\subset W\subset\ker x$.
\item Let $\mathbb{O}\subset\mathfrak{X}$ be the $K$-orbit of the pair $(W,L)\in\mathfrak{X}(=\Gr_k(V)\times\mathbb{P}(V_1))$, then we have $\Phi(\mathbb{O})=\mathfrak{O}$.
\item If $\lambda$ is the unique partition such that the pair $(\mathfrak{O},\lambda)$ is good, then $\Phi^{-1}(\mathfrak{O})=\{\mathbb{O}\}$. If there is a second partition $\lambda'$ such that $(\mathfrak{O},\lambda')$ is good, then arguing similarly with $\lambda'$ in place of $\lambda$ we find a second $K$-orbit $\mathbb{O}'\subset\Xfv$ such that $\Phi(\mathbb{O}')=\mathfrak{O}$, and we have $\Phi^{-1}(\mathfrak{O})=\{\mathbb{O},\mathbb{O}'\}$.
\end{itemize}
The justification is as follows. By definition of the map $\Phi$ in Corollary \ref{corollary-1}, we have $\Phi(\mathbb{O})=\mathfrak{O}$ if and only if the inclusion $T^*_\mathbb{O}\Xfv\subset\overline{\pi^{-1}(\mathfrak{O})}$ holds. Recall from Section \ref{section-1-3} that the closure of the conormal bundle $T^*_\mathbb{O}\Xfv$ is an irreducible component of the conormal variety $\mathcal{Y}$. On the other hand, we know from Sections \ref{section-5-1}--\ref{section-5-2} that the components of $\mathcal{Y}$ contained in $\overline{\pi^{-1}(\mathfrak{O})}$ correspond
to the good pairs of the form $(\mathfrak{O},\lambda)$. Specifically if $(\mathfrak{O},\lambda)$ is a good pair then the set
\[K\{(W,L,x):(W,x)\in\Gr_k(V)\times \mathcal{O}_{\lambda}^G \mbox{ satisfying (\ref{32})}\},\]
where
\begin{equation}
\label{32} \mathrm{Im}\,x^\theta\subset L\subset\ker x^\theta,\ \mathrm{Im}\,x\subset W\subset \ker x,\ \mbox{ and }\ x^{-\theta}=z,
\end{equation}
is dense in a component $\mathcal{C}_{(\mathfrak{O},\lambda)}\subset\mathcal{Y}$ contained in $\overline{\pi^{-1}(\mathfrak{O})}$, and every such component is of this form. Therefore $\mathcal{C}_{(\mathfrak{O},\lambda)}=\overline{T^*_\mathbb{O}\Xfv}$ if and only if $(W,L,x)\in T^*_\mathbb{O}\Xfv$ for any generic pair $(W,x)\in\Gr_k(V)\times\mathcal{O}_{\lambda}^G$ satisfying (\ref{32}), which is equivalent to having $(W,L)\in\mathbb{O}$.

\begin{proof}[Proof of Example \ref{example-2-17}]
Here we have $p=3$, $q=1$, and $k\in\{1,2\}$.

In Tables \ref{table-1} and \ref{table-2} we calculate the correspondence $\phi:\Theta_2^k\to\Pi_2^k$ for $k=1$ and $k=2$, by using the technique described above.
In the first column of each table we enumerate the various elements of $\Pi_2^k$.
In the second column we choose a representative $(L,z)$ of the corresponding orbit $\mathfrak{O}^K_{((r,s),\mu)}$.
In the third column we indicate the partition(s) $\lambda$ such that the pair $(\mathfrak{O}_{((r,s),\mu)}^K,\lambda)$ is good (see Proposition \ref{proposition-5-6}). 
In the fourth column (with the help of Lemma \ref{lemma-3}) and in the  fifth column we compute the pairs $(W,x)\in \Gr_k(V)\times\mathcal{N}(\mathfrak{g})$ of elements satisfying (\ref{32}) (possibly depending on scalars $\alpha,\beta$).
In the sixth column we indicate the values of $n_j=\dim W\cap V_j$ (for $j\in\{1,2\}$) and the relative position of $L$ and $W$ when the pair $(W,x)$ is generic. In the last column, taking Proposition \ref{proposition-2-16} into account, we deduce
the $(1,2)$-tableau $(\tau,i)$ such that the latter pair $(W,L)$ belongs to $\mathbb{O}_{(\tau,i)}$. In each case we recover the correspondence $\phi$ stated in Example \ref{example-2-17}, and this observation completes the proof.
\end{proof}

\begin{table}[hbtp]
\tiny
\caption{Proof of Example \ref{example-2-17} in the case $k=1$}
\label{table-1}
\renewcommand{\arraystretch}{3}
\begin{tabular}{c||c|c|c|c|c|c|}
$((r,s),\mu)$ & $(L,z)$ & $\lambda$ & $x$ & $W$ & $\renewcommand{\arraystretch}{1}\begin{array}{c}(n_1,n_2) \\ (W,L) \end{array}$ & $(\tau,i)$ \\
\hline\hline
$\verysmallyoung{+\sbarm,::+,::+}$ & $\smallmatrixt{* \\ 0 \\ 0 \\ \hline 0},\smallmatrixt{0 & 0 & 0  & 1 \\ 0 & 0 & 0 & 0 \\ 0 & 0 & 0 & 0 \\ \hline 0 & 0 & 0 & 0}$ & $(2,1^2)$ & $\renewcommand{\arraystretch}{1.1}\begin{array}[t]{c}\smallmatrixt{0 & \alpha & \beta  & 1 \\ 0 & 0 & 0 & 0 \\ 0 & 0 & 0 & 0 \\ \hline 0 & 0 & 0 & 0}\\ \alpha,\beta\in\mathbb{C}\end{array}$ & $\smallmatrixt{* \\ 0 \\ 0 \\ \hline 0}$ & $\renewcommand{\arraystretch}{1.2}\begin{array}{c} (1,0) \\ L=W\end{array}$ & $\!\left(\verysmallyoung{00,0,2},2\right)\!$ \\
\hline
\raisebox{0pt}[6ex][5ex]{}
$\verysmallyoung{+\sbarm,\spbar,\spbar}$ & $\smallmatrixt{0 \\ * \\ 0 \\ \hline 0},\smallmatrixt{0 & 0 & 0  & 1 \\ 0 & 0 & 0 & 0 \\ 0 & 0 & 0 & 0 \\ \hline 0 & 0 & 0 & 0}$ & $(2,1^2)$ & $\smallmatrixt{0 & 0 & 0 & 1 \\ 0 & 0 & 0 & 0 \\ 0 & 0 & 0 & 0 \\ \hline 0 & 0 & 0 & 0}$ & $\smallmatrixt{* \\ 0 \\ 0 \\ \hline 0}$ & $\renewcommand{\arraystretch}{1.2}\begin{array}{c} (1,0) \\ L\not\subset W{+}V_2\end{array}$ & $\!\left(\verysmallyoung{00,0,2},0\right)\!$ \\
\hline
$\verysmallyoung{:\sbarm+,\spbar,\spbar}$ & $\smallmatrixt{0 \\ * \\ 0 \\ \hline 0},\smallmatrixt{0 & 0 & 0  & 0 \\ 0 & 0 & 0 & 0 \\ 0 & 0 & 0 & 0 \\ \hline 1 & 0 & 0 & 0}$ & $(2,1^2)$ & $\renewcommand{\arraystretch}{1.1}\begin{array}[t]{c}\smallmatrixt{0 & 0 & 0  & 0 \\ \alpha & 0 & 0 & 0 \\ 0 & 0 & 0 & 0 \\ \hline 1 & 0 & 0 & 0}\\ \alpha\in\mathbb{C}\end{array}$ & $\langle\smallmatrixt{0 \\ \alpha \\ 0 \\ \hline 1}\rangle_\mathbb{C}$ & $\renewcommand{\arraystretch}{1.2}\begin{array}{c} (0,0) \\ L{\subset}W{+}V_2,\,L{\not\subset} W\end{array}$ & $\!\left(\verysmallyoung{01,0,1},1\right)\!$ \\
\hline
\raisebox{0pt}[6ex][5ex]{}
$\verysmallyoung{-\spbar,:\spbar,:\spbar}$ & $\smallmatrixt{* \\ 0 \\ 0 \\ \hline 0},\smallmatrixt{0 & 0 & 0  & 0 \\ 0 & 0 & 0 & 0 \\ 0 & 0 & 0 & 0 \\ \hline 1 & 0 & 0 & 0}$ & $(2,1^2)$ & $\smallmatrixt{0 & 0 & 0  & 0 \\ 0 & 0 & 0 & 0 \\ 0 & 0 & 0 & 0 \\ \hline 1 & 0 & 0 & 0}$ & $V_2$ & $\renewcommand{\arraystretch}{1.2}\begin{array}{c} (0,1) \\ L\not\subset W{+}V_2\end{array}$ & $\!\left(\verysmallyoung{02,0,0},0\right)\!$ \\
\hline
\raisebox{0pt}[8ex][6ex]{}
$\verysmallyoung{\spbar,\spbar,\spbar,:\sbarm}$ & $\smallmatrixt{* \\ 0 \\ 0 \\ \hline 0},\smallmatrixt{0 & 0 & 0  & 0 \\ 0 & 0 & 0 & 0 \\ 0 & 0 & 0 & 0 \\ \hline 0 & 0 & 0 & 0}$ & $(1^4)$ & $\smallmatrixt{0 & 0 & 0  & 0 \\ 0 & 0 & 0 & 0 \\ 0 & 0 & 0 & 0 \\ \hline 0 & 0 & 0 & 0}$ & any line & $\renewcommand{\arraystretch}{1.2}\begin{array}{c} (0,0) \\ L\not\subset W{+}V_2\end{array}$ & $\!\left(\verysmallyoung{01,0,1},0\right)\!$ \\
\hline
\end{tabular}
\renewcommand{\arraystretch}{1}
\end{table}

\begin{table}[hbtp]
\tiny
\caption{Proof of Example \ref{example-2-17} in the case $k=2$}
\label{table-2}
\renewcommand{\arraystretch}{2}
\begin{tabular}{c||c|c|c|c|c|c|}
$((r,s),\mu)$ & $(L,z)$ & $\lambda$ & $x$ & $W$ & $\renewcommand{\arraystretch}{1}\begin{array}{c}(n_1,n_2) \\ (W,L) \end{array}$ & $(\tau,i)$ \\
\hline\hline
\raisebox{0pt}[6ex][5ex]{}
$\verysmallyoung{+\sbarm,::+,::+}$ & $\smallmatrixt{* \\ 0 \\ 0 \\ \hline 0},\smallmatrixt{0 & 0 & 0  & 1 \\ 0 & 0 & 0 & 0 \\ 0 & 0 & 0 & 0 \\ \hline 0 & 0 & 0 & 0}$ & $(2,1^2)$ & $\renewcommand{\arraystretch}{1.1}\begin{array}[t]{c}\smallmatrixt{0 & \alpha & \beta  & 1 \\ 0 & 0 & 0 & 0 \\ 0 & 0 & 0 & 0 \\ \hline 0 & 0 & 0 & 0}\\ \alpha,\beta\in\mathbb{C}\end{array}$ & $\smallmatrixt{* \\ 0 \\ 0 \\ \hline 0}\subset W\subset\ker x$ & $\renewcommand{\arraystretch}{1.2}\begin{array}{c} (1,0) \\ L\subset W\end{array}$ & $\!\left(\verysmallyoung{01,1,2},2\right)\!$ \\
\hline
\raisebox{0pt}[6ex][5ex]{}
$\verysmallyoung{+\sbarm,\spbar,\spbar}$ & $\smallmatrixt{0 \\ * \\ 0 \\ \hline 0},\smallmatrixt{0 & 0 & 0  & 1 \\ 0 & 0 & 0 & 0 \\ 0 & 0 & 0 & 0 \\ \hline 0 & 0 & 0 & 0}$ & $(2,1^2)$ & $\smallmatrixt{0 & 0 & 0 & 1 \\ 0 & 0 & 0 & 0 \\ 0 & 0 & 0 & 0 \\ \hline 0 & 0 & 0 & 0}$ & $\smallmatrixt{* \\ 0 \\ 0 \\ \hline 0}\subset W\subset V_1$ & $\renewcommand{\arraystretch}{1.2}\begin{array}{c} (2,0) \\ L\not\subset W{+}V_2\end{array}$ & $\!\left(\verysmallyoung{00,2,2},0\right)\!$ \\
\cline{3-7}
 & &$(2,2)$ & $\renewcommand{\arraystretch}{1.1}\begin{array}[t]{c}\smallmatrixt{0 & 0 & 0 & 1 \\ 0 & 0 & \alpha & 0 \\ 0 & 0 & 0 & 0 \\ \hline 0 & 0 & 0 & 0}\\ \alpha\not=0\end{array}$ & $\smallmatrixt{* \\ * \\ 0 \\ \hline 0}$ & $\renewcommand{\arraystretch}{1.2}\begin{array}{c} (2,0) \\ L\subset W\end{array}$ & $\!\left(\verysmallyoung{00,2,2},2\right)\!$ \\
\hline
\raisebox{0pt}[6ex][5ex]{}
$\verysmallyoung{:\sbarm+,\spbar,\spbar}$ & $\smallmatrixt{0 \\ * \\ 0 \\ \hline 0},\smallmatrixt{0 & 0 & 0  & 0 \\ 0 & 0 & 0 & 0 \\ 0 & 0 & 0 & 0 \\ \hline 1 & 0 & 0 & 0}$ & $(2,1^2)$ & $\renewcommand{\arraystretch}{1.1}\begin{array}[t]{c}\smallmatrixt{0 & 0 & 0  & 0 \\ \alpha & 0 & 0 & 0 \\ 0 & 0 & 0 & 0 \\ \hline 1 & 0 & 0 & 0}\\ \alpha\in\mathbb{C}\end{array}$ & $\langle\smallmatrixt{0 \\ \alpha \\ 0 \\ \hline 1}\rangle_\mathbb{C}\subset W\subset\smallmatrixt{0 \\ * \\ * \\ \hline *}$ & $\renewcommand{\arraystretch}{1.2}\begin{array}{c} (1,0) \\ L{\subset} W{+}V_2,\,L{\not\subset} W\end{array}$ & $\!\left(\verysmallyoung{01,1,2},1\right)\!$ \\
\cline{3-7}
 & & $(2,2)$ & $\renewcommand{\arraystretch}{1.1}\begin{array}[t]{c} \smallmatrixt{0 & 0 & 0  & 0 \\ \alpha & 0 & \beta & 0 \\ 0 & 0 & 0 & 0 \\ \hline 1 & 0 & 0 & 0}\\ \alpha\in\mathbb{C},\beta\not=0\end{array}$ & $\smallmatrixt{0 \\ * \\ 0 \\ \hline *}$ & $\renewcommand{\arraystretch}{1.2}\begin{array}{c} (1,1) \\ L\subset W\end{array}$ & $\!\left(\verysmallyoung{02,0,2},2\right)\!$ \\
\hline
\raisebox{0pt}[6ex][5ex]{}
$\verysmallyoung{-\spbar,:\spbar,:\spbar}$ & $\smallmatrixt{* \\ 0 \\ 0 \\ \hline 0},\smallmatrixt{0 & 0 & 0  & 0 \\ 0 & 0 & 0 & 0 \\ 0 & 0 & 0 & 0 \\ \hline 1 & 0 & 0 & 0}$ & $(2,1^2)$ & $\smallmatrixt{0 & 0 & 0  & 0 \\ 0 & 0 & 0 & 0 \\ 0 & 0 & 0 & 0 \\ \hline 1 & 0 & 0 & 0}$ & $V_2\subset W\subset\smallmatrixt{0 \\ * \\ * \\ \hline *}$ & $\renewcommand{\arraystretch}{1.2}\begin{array}{c} (1,1) \\ L\not\subset W{+}V_2\end{array}$ & $\!\left(\verysmallyoung{02,0,2},0\right)\!$ \\
\hline
\raisebox{0pt}[8ex][6ex]{}
$\verysmallyoung{\spbar,\spbar,\spbar,:\sbarm}$ & $\smallmatrixt{* \\ 0 \\ 0 \\ \hline 0},\smallmatrixt{0 & 0 & 0  & 0 \\ 0 & 0 & 0 & 0 \\ 0 & 0 & 0 & 0 \\ \hline 0 & 0 & 0 & 0}$ & $(1^4)$ & $\smallmatrixt{0 & 0 & 0  & 0 \\ 0 & 0 & 0 & 0 \\ 0 & 0 & 0 & 0 \\ \hline 0 & 0 & 0 & 0}$ & any & $\renewcommand{\arraystretch}{1.2}\begin{array}{c} (1,0) \\ L\not\subset W{+}V_2\end{array}$ & $\!\left(\verysmallyoung{01,1,2},0\right)\!$ \\
\hline
\end{tabular}
\renewcommand{\arraystretch}{1}
\end{table}

\begin{proof}[Proof of Example \ref{example-2-18}]
Here we suppose $p=q=k=\frac{n}{2}=:m$.
We consider an element $((r,s),\mu)\in\Pi_2^k$ and 
we compute the element $(\tau,i):=\phi^{-1}(((r,s),\mu))\in\Theta_2^k$
with the technique described at the beginning of this subsection.
Let $\ell=r+s$.

We fix a representative $(L,z)\in\mathfrak{O}_{((r,s),\mu)}^K$ and we write $L=\langle v\rangle_\mathbb{C}$ with $v\in V_1\setminus\{0\}$ and
$z=\left(\begin{smallmatrix}
0 & a \\ b & 0
\end{smallmatrix}\right)$ with $a\in M_m(\mathbb{C})$ and $b\in M_m(\mathbb{C})$, $\rk a=r$, $\rk b=s$.  By Proposition \ref{proposition-5-6}, $\lambda:=(n-\ell,\ell)^*$ is the only partition such that the pair $(\mathfrak{O}_{((r,s),\mu)}^K,\lambda)$ is good.

First assume $((r,s),\mu)$ of type (I). Thus $L\subset\mathrm{Im}\,a\subset\ker b$.
By Lemma \ref{lemma-3}, the elements $x\in\mathcal{O}_{\lambda}^G$ such that \begin{equation}
\label{33}
\mathrm{Im}\,x^\theta\subset L\subset\ker x^\theta\quad\mbox{and}\quad x^{-\theta}=z\end{equation} are of the form
\[x=\left(\begin{matrix}
v\cdot{}^tu & a \\ b & 0
\end{matrix}\right)\quad\mbox{with $u\in\ker{}^ta$}.\]
Since $v\in\mathrm{Im}\,x$, we get $\mathrm{Im}\,x=\mathrm{Im}\,a\oplus\mathrm{Im}\,b$. Hence every subspace $W\in\Gr_k(V)$ such that $\mathrm{Im}\,x\subset W\subset \ker x$ satisfies $\dim W\cap V_1\geq\dim\mathrm{Im}\,a=r$, 
$\dim W\cap V_2\geq\dim\mathrm{Im}\,b=s$, and $L\subset \mathrm{Im}\,a\subset W$. In fact choosing a basis $(v_1,\ldots,v_{m-\ell})$ (resp., $(w_1,\ldots,w_{m-\ell})$) of some supplementary subspace of $\mathrm{Im}\,a$ in $\ker b$ (resp., of $\mathrm{Im}\,b$ in $\ker a$) and letting $w\in V_2$ such that $v=aw$, it turns out that the subspace
\[
W:=\mathrm{Im}\,a\oplus\mathrm{Im}\,b\oplus\langle v_j+w_j-({}^tu\cdot v_j)w:1\leq j\leq m-\ell\rangle_\mathbb{C}\in\Gr_k(V)
\]
satisfies $\mathrm{Im}\,x\subset W\subset\ker x$ and realizes the equalities $\dim W\cap V_1=r$ and $\dim W\cap V_2=s$.
Therefore generically those equalities hold. Hence the pair $(\tau,i)$ is such that the $(1,2)$-tableau $\tau$ contains $r$ entries $2$ in its first column and $s$ entries $2$ in its second column, and $i=2$ (since $L\subset W$), as claimed in Example \ref{example-2-18}.

Next assume $((r,s),\mu)$ of type (III), that is, $L\not\subset \ker b$.
In that case by Lemma \ref{lemma-3} the only element $x\in\mathcal{O}_{\lambda}^G$ satisfying (\ref{33}) is $x=z$.
Thus $\mathrm{Im}\,x=\mathrm{Im}\,a\oplus\mathrm{Im}\,b$ and $\ker x=\ker a\oplus\ker b$. Every $W\in\Gr_k(V)$ such that $\mathrm{Im}\,x\subset W\subset\ker x$ satisfies $\dim W\cap V_1\geq r$, $\dim W\cap V_2\geq s$, and $L\not\subset W+V_2$ (since $(W+V_2)\cap V_1\subset \ker b$ and $L\not\subset \ker b$). In addition the subspace 
\[W:=\mathrm{Im}\,a\oplus\mathrm{Im}\,b\oplus\langle v_j+w_j:1\leq j\leq m-\ell\rangle_\mathbb{C}\in\Gr_k(V)\] 
(with $v_1,\ldots,v_{m-\ell},w_1,\ldots,w_{m-\ell}$ as above)
realizes the equalities $\dim W\cap V_1=r$ and $\dim W\cap V_2=s$, therefore those equalities hold for a generic $W$.
This implies that the pair $(\tau,i)$ is such that $\tau$ is the $(1,2)$-tableau containing $r$ entries $2$ in its first column and $s$ entries $2$ in its second column, and $i=0$ (since $L\not\subset W+V_2$), which agrees with the statement of Example \ref{example-2-18}.

Finally assume $((r,s),\mu)$ of type (II), thus $v\in \ker b\setminus\mathrm{Im}\,a$. By Lemma \ref{lemma-3}, the elements $x\in\mathcal{O}_{\lambda}^G$ fulfilling (\ref{33}) are of the form
\[x=\left(\begin{matrix}
v\cdot{}^tu & a \\ b & 0
\end{matrix}\right)\quad\mbox{with $u\in\mathrm{Im}\,{}^tb$}.\]
We see that $\mathrm{Im}\,a\subset\mathrm{Im}\,x\subset\mathrm{Im}\,a\oplus(\mathrm{Im}\,b+L)$ and $\ker x=\ker a\oplus\ker b$.
Every $W\in\Gr_k(V)$ such that $\mathrm{Im}\,x\subset W\subset \ker x$ satisfies 
$\dim W\cap V_1\geq r$ (since $\mathrm{Im}\,a\subset W\cap V_1$) and
\begin{eqnarray}
\label{38-new}
\dim W\cap V_2 & = & \dim W+\dim V_2-\dim(W+V_2) \\ & \geq & \dim W+\dim V_2-\dim(\ker b\oplus V_2)=s.
\nonumber
\end{eqnarray}
Choose a basis $(v_1=v,v_2,\ldots,v_{m-\ell})$ (resp., $(w_1,\ldots,w_{m-\ell})$) of a supplementary subspace of $\mathrm{Im}\,a$ in $\ker b$ (resp., of $\mathrm{Im}\,b$ in $\ker a$) and set 
\[W=\mathrm{Im}\,x\oplus\langle  v_j+w_j:1\leq j\leq m-\ell\rangle_\mathbb{C}\in\Gr_k(V).\] 
Then 
$\mathrm{Im}\,x\subset W\subset\ker x$
and 
it is readily seen that this subspace $W$ satisfies
\begin{equation}
\label{36}
\dim W\cap V_1=r,\quad \dim W\cap V_2=s,\quad \mbox{and}\quad L\not\subset W.\end{equation}
Hence (\ref{36}) holds for a generic $W$. 
Therefore by (\ref{38-new}) a generic $W$ also satisfies the inclusion
\[L\subset\ker b\subset W+V_2.\]
We conclude that $(\tau,i)$ is such that the $(1,2)$-tableau $\tau$ has $r$ entries $2$ in its first column, $s$ entries $2$ in its second column, and $i=1$. This coincides with the statement of Example \ref{example-2-18}. The verification of Example \ref{example-2-18} is complete.
\end{proof}

\section*{Appendix}

In Table \ref{table-3} we give a further example of the orbit correspondence in the case of 
$ (G,K) = (\GL_6(\mathbb{C}), \GL_3(\mathbb{C}){\times}\GL_3(\mathbb{C})), \; \mathfrak{X} = \Gr_3(\mathbb{C}^6){\times}\mathbb{P}(\mathbb{C}^3) $.
In this case, the correspondence is a bijection (see Example~\ref{example-2-18}).


\bigskip

\begin{table}[hbtp]
\caption{Orbit correspondence for $ p = q = k = 3 $}
\label{table-3}
{\tiny
\renewcommand{\pbar}{\,{+}\hspace*{.4pt}\rule[-1.5pt]{1.4pt}{2.55ex}}
\renewcommand{\barm}{\hspace*{-.1pt}\rule[-2pt]{1.4pt}{2.55ex}\hspace*{.8pt}{-}\,}
\renewcommand{\spbar}{\,{+}\hspace*{.6pt}\rule[-2pt]{1.7pt}{2.65ex}}
\renewcommand{\sbarm}{\hspace*{-.1pt}\rule[-2pt]{1.4pt}{2.65ex}\hspace*{.8pt}{-}\,}

\noindent
$
\begin{array}{c|c|c|c|c|c|c|c|c|c}
\raisebox{0pt}[4ex][5ex]{}
(\tau, i) \in \Theta_2^3 & 
\!\left(\verysmallyoung{20,20,20}, 2\right)\!  & 
\!\left(\verysmallyoung{00,20,22}, 0\right)\!  & 
\!\left(\verysmallyoung{00,20,22}, 2\right)\!  & 
\!\left(\verysmallyoung{00,02,22}, 0\right)\!  & 
\!\left(\verysmallyoung{00,02,22}, 2\right)\!  & 
\!\left(\verysmallyoung{02,02,02}, 0\right)\!  
\\
\hline
\raisebox{0pt}[8ex][3ex]{}
\phi(\tau,i)\in \Pi_2^3 &
\verysmallyoung{+\barm,+\barm,+\barm} & 
\verysmallyoung{:+\barm,:+\barm,-\pbar} & 
\verysmallyoung{+\barm,+\barm,:\barm{+}} & 
\verysmallyoung{:+\barm,-\pbar,-\pbar} & 
\verysmallyoung{+\barm,:\barm{+},:\barm{+}} & 
\verysmallyoung{-\pbar,-\pbar,-\pbar}
\end{array}
$
\bigskip

\noindent
$
\begin{array}{c|c|c|c|c|c|c|c|c|c}
\raisebox{0pt}[4ex][5ex]{}
(\tau, i) \in \Theta_2^3 & 
\!\left(\verysmallyoung{10,20,21}, 1\right)\!  & 
\!\left(\verysmallyoung{10,20,21}, 2\right)\!  & 
\!\left(\verysmallyoung{00,11,22}, 0\right)\!  &
\!\left(\verysmallyoung{00,11,22}, 1\right)\!  & 
\!\left(\verysmallyoung{00,11,22}, 2\right)\!  & 
\!\left(\verysmallyoung{01,02,12}, 0\right)\! 
\\
\hline
\raisebox{0pt}[8ex][3ex]{}
\phi(\tau,i)\in \Pi_2^3 &
\verysmallyoung{+\barm,+\barm,\pbar,:\barm} & 
\verysmallyoung{+\barm,+\barm,::{+},:\barm} & 
\verysmallyoung{:+\barm,{-}\pbar,:\pbar,::\barm} &
\verysmallyoung{+\barm,:\barm{+},\pbar,:\barm} & 
\verysmallyoung{+\barm,:\barm{+},::{+},:\barm} & 
\verysmallyoung{-\pbar,-\pbar,:\pbar,::\barm}
\end{array}
$
\bigskip

\noindent
$
\begin{array}{c|c|c|c|c|c|c|c|c|c}
\raisebox{0pt}[4ex][5ex]{}
(\tau, i) \in \Theta_2^3 &
\!\left(\verysmallyoung{01,02,12}, 1\right)\!  & 
\!\left(\verysmallyoung{10,11,21}, 1\right)\!  & 
\!\left(\verysmallyoung{10,11,21}, 2\right)\!  & 
\!\left(\verysmallyoung{01,11,12}, 0\right)\!  & 
\!\left(\verysmallyoung{01,11,12}, 1\right)\!  & 
\!\left(\verysmallyoung{11,11,11}, 1\right)\! 
\\
\hline
\raisebox{0pt}[10ex][3ex]{}
\phi(\tau,i) \in \Pi_2^3 &
\verysmallyoung{:\barm{+},:\barm{+},\pbar,:\barm} & 
\verysmallyoung{+\barm,\pbar,\pbar,:\barm,:\barm} & 
\verysmallyoung{+\barm,::{+},::{+},:\barm,:\barm} & 
\verysmallyoung{{-}\pbar,:\pbar,:\pbar,::\barm,::\barm} & 
\verysmallyoung{:\barm{+},\pbar,\pbar,:\barm,:\barm} & 
\verysmallyoung{\pbar,\pbar,\pbar,:\barm,:\barm,:\barm} 
\end{array}
$
}
\end{table}

\bigskip

\noindent
\textbf{Acknowledgements.}\;\;
L.F. thanks Aoyama Gakuin University for warm hospitality and support in January 2014.   
K.N. is grateful to the generous support and hospitality of Universit\'e de Lorraine while he visited there in September 2015. This work was conceived during these visits. 
The authors thank the anonymous referee for pointing out some references and useful comments.

\renewcommand{\MR}[1]{}

\def\cftil#1{\ifmmode\setbox7\hbox{$\accent"5E#1$}\else
  \setbox7\hbox{\accent"5E#1}\penalty 10000\relax\fi\raise 1\ht7
  \hbox{\lower1.15ex\hbox to 1\wd7{\hss\accent"7E\hss}}\penalty 10000
  \hskip-1\wd7\penalty 10000\box7} \def\cprime{$'$} \def\cprime{$'$}
  \def\Dbar{\leavevmode\lower.6ex\hbox to 0pt{\hskip-.23ex \accent"16\hss}D}
\providecommand{\bysame}{\leavevmode\hbox to3em{\hrulefill}\thinspace}
\providecommand{\MR}{\relax\ifhmode\unskip\space\fi MR }
\providecommand{\MRhref}[2]{%
  \href{http://www.ams.org/mathscinet-getitem?mr=#1}{#2}
}
\providecommand{\href}[2]{#2}

\end{document}